\renewcommand{\thefootnote}{\fnsymbol{footnote}}
\theoremstyle{plain}
\newtheorem{theorem}{Theorem}[section]
\newtheorem{lemma}[theorem]{Lemma}
\theoremstyle{definition}
\newtheorem{definition}[theorem]{Definition}
\newtheorem{example}[theorem]{Example}
\newtheorem{conjecture}[theorem]{Conjecture}
\newtheorem{open}[theorem]{Open Question}
\numberwithin{equation}{section}
\newcommand{\calC}{\mathcal{C}} 
\newcommand{\calF}{\mathcal{F}} 
\newcommand{\calG}{\mathcal{G}}
\newcommand{\calP}{\mathcal{P}} 
\newcommand{\calQ}{\mathcal{Q}} 
\newcommand{\calT}{\mathcal{T}}
\newcommand{\calZ}{\mathcal{Z}}
\newcommand{\bbR}{\mathbb{R}}
\newcommand{\bbZ}{\mathbb{Z}}
\newcommand{\bfa}{\mathbf{a}}
\newcommand{\bfd}{\mathbf{d}}
\newcommand{\bfe}{\mathbf{e}}
\newcommand{\bfu}{\mathbf{u}}
\newcommand{\bfz}{\mathbf{z}}
\newcommand{\row}{\rightarrow}
\newcommand{\vol}{\textup{vol}}
\newcommand{\flows}{\ensuremath{\calF_n^\bbZ}}
\newcommand{\des}{\ensuremath{\mathsf{des}}}
\newcommand{\swap}{\ensuremath{\mathsf{swap}}}
\newcommand{\sz}{\ensuremath{\mathsf{sz}}}
\newcommand{\zs}{\ensuremath{\mathsf{zs}}}
\renewcommand{\phi}{\varphi}
\newcommand{\Zig}{\ensuremath{\textup{Zig}}}
\newcommand{\cZig}{\widehat{\Zig}}
\def\Square{\hspace{.01in}\tikz{
            \node[regular polygon,regular polygon sides=4,draw,inner sep=2.75pt]{}}\hspace{.01in}} 
\newcommand*\squared[1]{\tikz[baseline=(char.base)]{
            \node[shape=rectangle,draw,inner sep=2pt] (char) {\tiny #1};}}
\newcommand*\triangled[1]{\tikz[baseline=(char.base)]{
            \node[regular polygon,regular polygon sides=3, inner sep=0.1pt,draw,fill=black!5] (char) {\tiny #1};}}
\title[A Triangulation of the Flow Polytope of the Zigzag Graph]{A Triangulation of the Flow Polytope\\ of the Zigzag Graph}
\author{Rachel Brunner}
\address{Department of Mathematics \\ Drexel University \\ 15 S 33rd Street \\ Philadelphia, PA~19104, U.S.A.}
\email{\tt rachel.brunner@drexel.edu}
\author{Christopher R.\ H.\ Hanusa}
\address{Department of Mathematics \\ Queens College (CUNY) \\ 65-30 Kissena Blvd. \\ Queens, NY 11367-1597, U.S.A.}
\email{\tt chanusa@qc.cuny.edu}
\begin{document}

\begin{abstract}
We show that the dual graph of the triangulation of the flow polytope of the zigzag graph adorned with the length-reverse-length framing is a subgraph of a grid graph. Through M{\'e}sz{\'a}ros, Morales, and Striker's bijection between simplices of the triangulation, integer flows of a different, supplemental flow polytope, we provide a simple numerical characterization of the adjacency between the triangulation's simplices in terms of their corresponding integer flows. The proofs result from the development of Postnikov and Stanley's sequences of noncrossing bipartite trees as combinatorial objects we call groves. We propose two new statistics derived from this construction that we conjecture recover the $h^*$-polynomial of the flow polytope of the zigzag graph.
\end{abstract}

\subjclass[2020]{Primary: 05A19, 05C21, 52B12, 52B05, 52C07; Secondary: 05C05, 05C20, 05C30, 52A38, 52B11}  

\keywords{flow polytope, zigzag graph, zigzag poset, triangulation, DKK triangulation, framing, grove, maximal clique, integer flow, integer lattice, square grid graph, $h^*$ polynomial, swap statistic, sz statistic, zs statistic, descent statistic}

{\let\thefootnote\relax\footnote{Version of \today.}}

\maketitle

\section{Introduction}
\label{sec:introduction}

\subsection{Flow Polytopes}

Let $G$ be a (directed) graph with vertex set $V=[n]:=\{1,\hdots,n\}$ and edge set $E$. Designate a {\em net flow vector} $\bfa=(a_1,\hdots,a_n)\in \bbZ^n$ to specify an amount of flow $a_i$ generated or destroyed at each vertex $i\in V$. A {\em flow} on $G$ with net flow $\bfa$ is a function $\phi:E\row \bbR$ that satisfies {\em conservation of flow} at every vertex $i\in V$; that is,
\[\sum_{e\,\in\, \textup{in}(i)} \phi(e) + a_{i}= \sum_{e\,\in\, \textup{out}(i)} \phi(e),\]
where $\textup{in}(i)$ and $\textup{out}(i)$ are the sets of incoming and outgoing edges of~$i$, respectively. The {\em flow polytope} $\mathcal{F}_{G}(\bfa)$ is the set of all flows on $G$ with net flow $\bfa$. The simplest choice for $\bfa$ is the vector $\bfu=(1,0,\hdots,0,-1)\in \bbZ^n$; in this case the flow polytope $\mathcal{F}_{G}(\bfu)$ embodies all ways to send one unit of flow through $G$ from vertex $1$ to vertex $n$. 

Two polytopes $\calP\subset \bbR^n$ and $\calQ\subset \bbR^m$ are said to be {\em integrally equivalent} if there is an affine transformation $f: \bbR^n\rightarrow \bbR^m$ whose restriction to $\calP$ is a bijection to $\calQ$ that preserves the lattice. Two integrally equivalent polytopes can be thought of as essentially the same polytope---they have the same face structure, normalized volume, and Ehrhart polynomial. 

Flow polytopes have been studied in multiple contexts over the past few decades in algebra, combinatorics, geometry, and representation theory. There are connections to representation theory and Lie Algebras through the study of Kostant partition functions \cite{baldoni08,mm15}. They have been used to study Tesler matrices and diagonal harmonics \cite{mmr17,lmm19}. They are related to subdivision algebras and Grothendieck polynomials \cite{ms20}. Toric varieties associated to flow polytopes connect to moduli spaces of quiver representations \cite{hill96,dj22}.

\subsection{Zigzag graphs}

This article explores flow polytopes of zigzag graphs. The {\em zigzag graph} $\Zig_{n}$ has vertices $[n]$ and two types of directed edges: $i\row i+1$ for $1\leq i\leq n-1$ and $i\row i+2$ for $1\leq i\leq n-2$. It has many nice properties---it is a planar graph, a spinal graph, and the distance graph $G(2,n)$. (See \cite{dleon23} for the latter definitions.)

This graph gets its name from the zigzag poset $\calZ_n$, defined by the covering relations $1\prec 2 \succ 3 \prec 4 \succ \cdots\, n$. The poset $\calZ_n$ is intricately related to {\em alternating permutations}---permutations $\alpha_1\alpha_2\cdots\alpha_n$ such that $\alpha_1<\alpha_2>\alpha_3<\alpha_4>\cdots$. The linear extensions of the zigzag poset, when read as permutations in one-line notation, are the inverses of the alternating permutations. The reader is referred to \cite{stanley2010survey,petersen2024zigzag}. 

The poset $\calZ_{n}$ also appears naturally as the truncated dual graph of the planar drawing of $\Zig_{n+2}$ as in Figure~\ref{fig:zigzag}. Because of this, a result of Postnikov \cite[Theorem~3.11]{mms19} proves that $\mathcal{F}_{\Zig_{n+2}}(\bfu)$ is integrally equivalent to the order polytope $\mathcal{O}(\calZ_{n})$. The object $\mathcal{O}(\calZ_{n})$ is studied by Coons and Sullivant \cite{coons19} and its $h^*$-polynomial is described in terms of a $\swap$ statistic. We discuss these connections in more detail in Section~\ref{sec:hstar}.

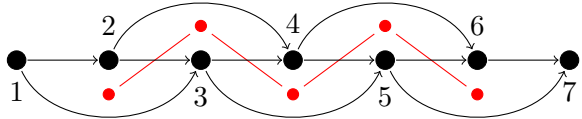
\begin{figure}
\begin{center}
    \begin{tikzpicture}[scale=0.7]
	\filldraw[black] (0,0) circle (5pt) node[label=below:{\small $1$}](a1) {};
	\filldraw[black] (1.75,0) circle (5pt) node[label={[label distance=0.05cm]90:\small $2$}](a2) {};
	\filldraw[black] (3.5,0) circle (5pt) node[label={[label distance=0.075cm]270:\small $3$}](a3) {};
	\filldraw[black] (5.25,0) circle (5pt) node[label={[label distance=0.075cm]90:\small $4$}](a4) {};
	\filldraw[black] (7,0) circle (5pt) node[label={[label distance=0.075cm]270:\small $5$}](a5) {};
	\filldraw[black] (8.75,0) circle (5pt) node[label={[label distance=0.05cm]90:\small $6$}](a6) {};
	\filldraw[black] (10.5,0) circle (5pt) node[label=below:{\small $7$}](a7) {};

	\filldraw[red] (1.75,-0.65) circle (3pt) node[](b1) {};
	\filldraw[red] (3.5,0.65) circle (3pt) node[](b2) {};
	\filldraw[red] (5.25,-0.65) circle (3pt) node[](b3) {};
	\filldraw[red] (7.0,0.65) circle (3pt) node[](b4) {};
	\filldraw[red] (8.75,-0.65) circle (3pt) node[](b5) {};

	\draw[-,red] (b1)--(b2)--(b3)--(b4)--(b5);		

	\draw[->] (a1)--(a2);		
	\draw[->] (a2)--(a3);		
	\draw[->] (a3)--(a4);		
	\draw[->] (a4)--(a5);		
	\draw[->] (a5)--(a6);			
	\draw[->] (a6)--(a7);			

	\draw[->]    (a1) to[out=-65,in=-115] (a3);
	\draw[->]    (a2) to[out=65,in=115] (a4);	
	\draw[->]    (a3) to[out=-65,in=-115] (a5);
	\draw[->]    (a4) to[out=65,in=115] (a6);	
	\draw[->]    (a5) to[out=-65,in=-115] (a7);
\end{tikzpicture}
\end{center}
\caption{The zigzag graph $\Zig_7$ (in black) and the zigzag poset $\calZ_5$ (in red).}
    \label{fig:zigzag}
\end{figure} 

Following \cite[Example~2.15]{dleon23}, we create the {\em contracted zigzag graph} $\cZig_{n}$ from $\Zig_{n+2}$ by contracting edges $1\row 2$ and $n+1\row n+2$ and reindexing the vertices to run from $1$~to~$n$ as shown in Figure~\ref{fig:czig}. As a result, every {\em inner vertex} ($2$ through $n-1$) of $\cZig_{n}$ has in-degree and out-degree $2$.

The edges $x_i:i\row i+1$ for $1\leq i\leq n-1$ are called {\em slack edges} and the edges $y_0:1\row 2$, $y_{n-1}:n-1\row n$, and $y_i:i\row i+2$ for $1\leq i\leq n-2$ are called {\em nonslack edges}. For the inner vertices, define the set of {\em incoming edges} $\textup{In}(i)=\{x_{i-1},y_{i-2}\}$ and the set of {\em outgoing edges} $\textup{Out}(i)=\{x_i,y_i\}$.

\begin{figure}
\begin{center}
    \begin{tikzpicture}[scale=0.6]
    \filldraw[black] (0,0) circle (5pt) node[label=below:{\small $1$}](a1) {};
	\filldraw[black] (2,0) circle (5pt) node[label=below:{\small $2$}](a2) {};
	\filldraw[black] (4,0) circle (5pt) node[label=below:{\small $3$}](a3) {};
	\filldraw[black] (6,0) circle (5pt) node[label=below:{\small $4$}](a4) {};
	\filldraw[black] (10,0) circle (5pt) node[label={[label distance=0.05cm]270:\small $n-2$}](an2) {};
	\filldraw[black] (12,0) circle (5pt) node[label={[label distance=0.05cm]270:\small $n-1$}](an1) {};
	\filldraw[black] (14,0) circle (5pt) node[label={[label distance=0.1cm]270:\small $n$}](an) {};
    \node[] at (8,0.5) {$\cdots$};
    \node[] at (1,0.16) {\scriptsize $x_1$};
    \node[] at (3,0.16) {\scriptsize $x_2$};
    \node[] at (5,0.16) {\scriptsize $x_3$};
    \node[] at (11,0.16) {\scriptsize $x_{n-2}$};
    \node[] at (13,0.16) {\scriptsize $x_{n-1}$};
    \node[] at (1,0.71) {\scriptsize $y_0$};
    \node[] at (13,0.71) {\scriptsize $y_{n-1}$};
    \node[] at (2,1.45) {\scriptsize $y_1$};
    \node[] at (4,1.45) {\scriptsize $y_2$};
    \node[] at (12,1.45) {\scriptsize $y_{n-2}$};

	\draw[->] (a1)--(a2);		
	\draw[->] (a2)--(a3);		
	\draw[->] (a3)--(a4);		
	\draw[-] (a4)--(7,0);		
	\draw[->] (9,0)--(an2);		
	\draw[->] (an2)--(an1);		
	\draw[->] (an1)--(an);		

	\draw[->]    (a1) to[out=40,in=140] (a2);
	\draw[->]    (a1) to[out=60,in=120] (a3);
	\draw[->]    (a2) to[out=60,in=120] (a4);	
	\draw[-]    (a3) to[out=60,in=145] (7,0.8);
	\draw[->]    (9,0.8) to[out=35,in=120] (an1);
	\draw[->]    (an2) to[out=60,in=120] (an);	
	\draw[->]    (an1) to[out=40,in=140] (an);	
\end{tikzpicture}
\end{center}
\caption{The contracted zigzag graph $\cZig_{n}$}
    \label{fig:czig}
\end{figure}     

A path from $1$ to $n$ will be called a {\em route}. A path from $1$ to $i$ will be called a {\em prefix} and a path from $i$ to $n$ will be called a {\em suffix}.

\cite[Lemma~2.2]{ms20} and \cite[Proposition~2.12]{dleon23} prove that the flow polytopes $\mathcal{F}_{\cZig_{n}}\!(\bfu)$ and $\mathcal{F}_{\Zig_{n+2}}(\bfu)$ are integrally equivalent, so this modification does not affect the combinatorics. From now on, we focus our study only on the contracted zigzag graph. 

\subsection{Framings and DKK Triangulations}

We aim to understand the structure of the flow polytope $\calF_{\cZig_{n}}(\bfu)$ by exploring its decomposition into unimodular triangulations. In \cite{dkk12}, Danilov, Karzanov, and Koshevoy introduce a way to generate many different triangulations of $\calF_{G}(\bfu)$ through different framings on the graph $G$. Following the presentation of Gonz\'alez D'Le\'on et al.\ \cite{gmpty23}, a {\em framing} $F$ on $G$ is a choice of linear orders $\prec_{\textup{In}(i)}$ and $\prec_{\textup{Out}(i)}$ associated to the sets of incoming and outgoing edges of every inner vertex $i\in V(G)$. 

Two framings have been the focus of recent study---the length framing \cite{vonbell22,vonbell21} and the planar framing \cite{vonbell21,mms19}. In the length framing, edges are ordered by their lengths---longer edges come before shorter edges. The planar framing is defined when $G$ is a planar graph and is drawn with a planar embedding; the order of the edges is determined by the relative spacial positions of the edges.

In this paper we use the {\em length-reverse-length (LRL) framing}. The LRL framing applies different orderings to the incoming edges and the outgoing edges. On incoming edges, the ``longer'' nonslack edges are first and the ``shorter'' slack edges are second (so $y_{i-2}\prec_{\textup{In}(i)} x_{i-1}$ at vertex $i$); whereas, on the outgoing edges, the slack edges are first and the nonslack edges are second (so $x_i\prec_{\textup{Out}(i)} y_i$ at vertex $i$). See Figure~\ref{fig:framing}. 

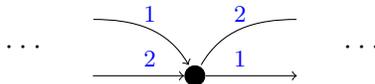
\begin{figure}
    \centering
\begin{tikzpicture}[scale=0.75]
	  \node(a2) at (2,0) {};
	\filldraw[black] (4,0) circle (5pt) node[](a3) {};
	\node(a4) at (6,0) {};
    \node[] at (1,0.5) {$\cdots$};
    \node[] at (7,0.5) {$\cdots$};
    \node[] at (3.2,0.3) {\color{blue} \scriptsize $2$};
    \node[] at (4.8,0.3) {\color{blue} \scriptsize $1$};
    \node[] at (3.2,1.1) {\color{blue} \scriptsize $1$};
    \node[] at (4.8,1.1) {\color{blue} \scriptsize $2$};
	\draw[->] (a2)--(a3);		
	\draw[->] (a3)--(a4);		
	\draw[->] (2.2,1) to[out=0,in=120] (a3);	
	\draw[-]    (a3) to[out=60,in=180] (5.8,1);
\end{tikzpicture}

\caption{The length-reverse-length framing at every inner vertex of $\cZig_n$.}
    \label{fig:framing}
\end{figure}      

\medskip
A framing induces a total order on the sets of prefixes and suffixes at all inner vertices $i$, as follows. Let $P$ and $P'$ be two different prefixes at $i$. There is some last vertex $j$ (at $i$ or before) where $P$ and $P'$ come together, along edges $e$ and $e'$, respectively. We order $P\prec P'$ if $e\prec_{\textup{In}(j)}e'$. Similarly, if $Q$ and $Q'$ are two different suffixes at $i$, there is some first vertex $k$ (at $i$ or after) where $Q$ and $Q'$ diverge, along edges $e$ and $e'$; we order $Q\prec Q'$ if $e\prec_{\textup{Out}(k)}e'$.

A \textit{maximal clique} is a maximal set of coherent routes. For a framing $F$ on a graph $G$, we denote by $\calC_G^F$ the set of all maximal cliques. Since every route is a vertex of $\calF_G(\bfu)$, we can take the convex hull of these vertices and interpret a maximal clique $C$ as a simplex $\Delta_C\subseteq\calF_G(\bfu)$. For a graph $G$ with a framing $F$, the {\em DKK~triangulation} $\calT_G^F$ of $\calF_G(\bfu)$ is the set of simplices 
\[\calT_G^F=\{\Delta_C \textup{ for $C\in\calC_G^F$}\}.\] 
This construction gives a regular unimodular triangulation of $\calF_G(\bfu)$, as proved in \cite{dkk12}.

\begin{theorem}[\protect{\cite[Theorems~1~and~2]{dkk12}}]
    $\calT_G^F$ contains the maximal cells of a regular triangulation of $\calF_G(\bfu)$.
\end{theorem}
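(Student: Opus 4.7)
The plan is to prove the theorem in two stages: first establish that the collection $\{\Delta_C : C\in\calC_G^F\}$ is a simplicial complex whose union equals $\calF_G(\bfu)$, then exhibit an explicit height function that realizes it as a regular triangulation. The key structural tool throughout is an \emph{uncrossing exchange}: if two routes $R_1,R_2$ are incoherent, there is an inner vertex $i$ at which the framing $F$ detects an ``$X$-pattern,'' and we can locally swap the portions of $R_1$ and $R_2$ on one side of $i$ to produce a new pair $R_1',R_2'$ whose edge-indicator sum $\phi_{R_1'}+\phi_{R_2'}$ equals $\phi_{R_1}+\phi_{R_2}$.

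First I would verify that each $\Delta_C$ is a full-dimensional simplex. The dimension of $\calF_G(\bfu)$ is $|E|-|V|+1$, and a direct inductive argument on inner vertices using the framing $F$ shows that every maximal clique has exactly $\dim\calF_G(\bfu)+1$ routes, whose indicator vectors in $\bbR^E$ are affinely independent. Next I would establish the covering property: given any $\phi\in\calF_G(\bfu)$, decompose $\phi$ as a positive combination of routes, and as long as two routes in the support are incoherent, apply the uncrossing exchange. Choosing a monovariant (e.g.\ the total number of incoherent intersections, weighted by flow) that strictly decreases under each exchange, iteration terminates at a decomposition supported on a coherent set, which by maximality lies in some clique $C$. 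Finally, that two simplices $\Delta_C,\Delta_{C'}$ meet in a common face follows because any $\phi\in\Delta_C\cap\Delta_{C'}$ has a unique decomposition on each affinely independent support, forcing the shared support to lie in $C\cap C'$.

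The harder and central step is regularity. The plan is to construct weights $w:E\to\bbR$ on the edges and define a height function on routes by $\omega(R)=f\!\left(\sum_{e\in R}w(e)\right)$ for a strictly convex $f$ (taking $f(x)=x^2$ suffices). The weights $w(e)$ are chosen so that for every incoherent pair and its uncrossing $(R_1,R_2)\mapsto(R_1',R_2')$, the equality of edge sums $w(R_1)+w(R_2)=w(R_1')+w(R_2')$ is accompanied by $w(R_1)\neq w(R_2)$, so that strict convexity of $f$ yields
\[
\omega(R_1)+\omega(R_2)\;>\;\omega(R_1')+\omega(R_2').
\]
Such generic weights can be produced inductively along the vertex order using the framing $F$ (assigning weights to outgoing edges at each vertex so the incoherence-detecting swap always strictly increases the squared sum). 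The lower envelope of the lifted points $\{(\phi_R,\omega(R))\}$ then projects to a regular subdivision, and the exchange inequality forces every lower face to be supported on a coherent route set, i.e.\ a clique $C\in\calC_G^F$.

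The main obstacle is precisely the construction and verification of the weights: one must choose $w$ simultaneously compatible with every local swap of the framing, and check that the induced strict convexity on every uncrossing is genuine rather than merely non-strict. Unimodularity of the resulting triangulation is then comparatively routine: each $\Delta_C$ has vertices that are $0/1$ route-indicator vectors in $\bbR^E$, and the coherence condition together with the inductive structure of $F$ makes the corresponding vertex matrix triangular (after reordering edges and routes along the vertex order of $G$), hence of determinant $\pm 1$ in the affine lattice of $\calF_G(\bfu)$.
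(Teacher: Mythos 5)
This theorem is cited by the paper from Danilov, Karzanov, and Koshevoy \cite{dkk12}; the paper itself offers no proof, so there is nothing internal to compare against. Your sketch is therefore judged on its own, and it does match the general shape of DKK's program (uncrossing exchanges, cliques as simplices, a convex lifting). However, the regularity step as written contains a genuine logical gap, and it is the heart of the matter.

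You assert that from $w(R_1)+w(R_2)=w(R_1')+w(R_2')$ together with $w(R_1)\neq w(R_2)$, strict convexity of $f$ yields $\omega(R_1)+\omega(R_2)>\omega(R_1')+\omega(R_2')$. That inference is not valid. For a strictly convex $f$ and a fixed sum $a+b=c+d$, the comparison $f(a)+f(b)\gtrless f(c)+f(d)$ is controlled by the \emph{spreads}: one needs $|a-b|>|c-d|$, not merely $a\neq b$. Concretely, writing $R_1=P_1 i Q_1$, $R_2=P_2 i Q_2$ for the incoherent pair and $R_1'=P_1 i Q_2$, $R_2'=P_2 i Q_1$ for its uncrossing, we have $w(R_1)-w(R_2)=\big(w(P_1)-w(P_2)\big)+\big(w(Q_1)-w(Q_2)\big)$ and $w(R_1')-w(R_2')=\big(w(P_1)-w(P_2)\big)-\big(w(Q_1)-w(Q_2)\big)$, so the inequality you want holds if and only if $w(P_1)-w(P_2)$ and $w(Q_1)-w(Q_2)$ have the \emph{same sign}. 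Whether that sign condition is met depends entirely on how the chosen weights $w$ interact with the framing's prefix and suffix orders at every inner vertex simultaneously: a single edge-weighting induces one order on prefixes and one on suffixes, and you need these to be compatibly aligned (or anti-aligned) with $\prec_{\mathrm{In}}$ and $\prec_{\mathrm{Out}}$ at every vertex. You flag this as ``the main obstacle'' but then treat it as a genericity issue (avoiding equalities), whereas it is in fact a sign/orientation issue that genericity alone cannot repair. Until you exhibit a construction of $w$ that forces the correct sign at every uncrossing --- or replace the naive convex lift with the more structured height function that DKK actually use --- the regularity claim is not established. The remaining steps (affine independence of routes in a clique, the uncrossing monovariant for covering, unimodularity via a triangular vertex matrix) are plausible outlines, but they are not the bottleneck.
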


When $G=\cZig_n$ and $F$ is the LRL framing, we simplify notation and write $\calT_n^{LRL}$ for $\calT_{\cZig_n}^{LRL}$.

We say that two simplices in a triangulation are {\em adjacent} if they share a co-dimension 1 face, or equivalently, their sets of vertices differ by exactly one vertex. Therefore, adjacent simplicies in a DKK triangulation correspond to maximal cliques that differ by exactly one route. Danilov, Karzanov, and Koshevoy explain exactly how these two differing routes are related.

\begin{theorem}[\protect{\cite[Proposition~3]{dkk12}}]
\label{thm:bump}
    Two maximal cliques $C$ and $C'$ correspond to two adjacent simplices in $\calT_G^F$ if the routes $R=PiQ$ and $R'=P'iQ'$ that they differ by (a) share a common inner vertex $i$ and (b) $P$ and $P'$ are consecutive in the total ordering of prefixes at $i$ and $Q$ and $Q'$ are consecutive in the total ordering of suffixes at $i$. 
\end{theorem}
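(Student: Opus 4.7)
The plan is to leverage the coherence structure of routes under the framing $F$ together with the maximality of cliques. Two routes are \emph{coherent} at a shared inner vertex if their prefix and suffix orderings at that vertex agree; coherence globally requires this at every shared inner vertex, so routes sharing no inner vertex are automatically coherent. A maximal clique $C$ is then a maximal pairwise-coherent set of routes, and $\Delta_C$ and $\Delta_{C'}$ are adjacent exactly when $|C \setminus C'| = |C' \setminus C| = 1$; write $C \setminus C' = \{R\}$, $C' \setminus C = \{R'\}$, and $K := C \cap C'$.

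For the forward direction, observe that $R$ and $R'$ cannot themselves be coherent: otherwise $K \cup \{R, R'\}$ would be coherent and properly contain the maximal clique $C$. Since incoherence requires a shared inner vertex with reversed prefix/suffix orderings, $R$ and $R'$ share some inner vertex $i$; writing $R = PiQ$ and $R' = P'iQ'$, we may assume without loss of generality that $P \prec P'$ and $Q \succ Q'$. This yields condition~(a).

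The main obstacle is proving condition~(b), the consecutiveness of $P, P'$ and of $Q, Q'$. I would argue by contradiction: suppose there is an intermediate prefix $P''$ at $i$ with $P \prec P'' \prec P'$, and form the interpolating route $R'' = P''iQ'$. Using that every $S \in K$ is coherent with both $R$ and $R'$, one pins down $S$'s prefix and suffix orderings at $i$ and checks that they remain compatible with $R''$'s prefix $P''$ and suffix $Q'$, so that $K \cup \{R''\}$ is coherent. However, $R''$ is incoherent with $R$ at $i$ (since $P \prec P''$ but $Q \succ Q'$), so $R \notin K \cup \{R''\}$, and any maximal clique $C''$ containing $K \cup \{R''\}$ is distinct from $C$. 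Then $\Delta_{C''}$ and $\Delta_{C'}$ both share the codimension-$1$ face $\Delta_K$ with $\Delta_C$; since a codimension-$1$ face of a triangulation lies in at most two top-dimensional simplices, we must have $C'' = C'$ and so $R'' = R'$, contradicting $P'' \prec P'$. The suffix case is symmetric.

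For the reverse direction, given $R = PiQ$ and $R' = P'iQ'$ satisfying (a) and (b), I would take $K$ to be the set of routes coherent with both $R$ and $R'$ and show that $C := K \cup \{R\}$ and $C' := K \cup \{R'\}$ are both maximal. Consecutiveness is the decisive tool: any alternative route $R'' = P''iQ''$ coherent with all of $K \cup \{R\}$ must have a prefix $P''$ that is $\prec$-comparable with $P$ and $P'$ in a way consistent with both, which by consecutiveness forces $P'' \in \{P, P'\}$, and similarly $Q'' \in \{Q, Q'\}$; the four combinations reduce to $R$ and $R'$, the latter being incoherent with $R$. Hence $C$ and $C'$ are maximal and $R$ is the unique route of $C$ exchangeable for $R'$, giving adjacency of $\Delta_C$ and $\Delta_{C'}$ in $\calT_G^F$.
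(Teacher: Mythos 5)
This is a theorem cited directly from \cite[Proposition~3]{dkk12}; the paper supplies no proof of its own, so there is no in-paper argument to compare against, and I will assess your attempt on its own terms. The overall shape of your argument for the substantive direction (adjacency implies (a) and (b)) is reasonable: $R$ and $R'$ must be mutually incoherent by maximality, so they share an inner vertex $i$ at which the prefix and suffix orders cross, and an intermediate prefix $P''$ would produce an interpolating route $R''$ whose maximal clique would make a third top-dimensional cell containing the facet $\Delta_K$, contradicting the triangulation property. The gap lies in the claim that $K \cup \{R''\}$ is coherent. You verify the coherence of $R''$ with $S \in K$ only at the vertex $i$, but coherence must hold at \emph{every} inner vertex that $S$ and $R''$ share. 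The interpolating route $R'' = P''iQ'$ may pass, along $P''$, through vertices lying on neither $P$ nor $P'$, and at such a vertex the coherence of $S$ with $R$ and with $R'$ puts no constraint on the comparison of $S$ with $R''$; the argument that $K \cup \{R''\}$ is a clique is therefore incomplete. (For vertices of $R''$ that lie on $Q'$ past $i$ there is no difficulty, since those also lie on $R'$ and the relevant prefix orderings are inherited from the orderings at $i$; the uncontrolled vertices are the ones on $P''$ strictly before $i$.) You would need either to choose $P''$ so that the interpolating route introduces no new vertices, or to invoke a structural lemma about the framing-induced total order on prefixes that rules out the problematic crossings --- neither of which appears in your sketch.

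Two secondary points. You do not specify which common vertex to take as $i$ when $R$ and $R'$ share an entire subpath; as the paper itself notes in the proof of Theorem~\ref{thm:GrovesAdjacent}, one should take $i$ to be the last vertex of the shared subpath so that $Q$ and $Q'$ genuinely diverge at $i$, and without this choice the description $R=PiQ$, $R'=P'iQ'$ with consecutive $P,P'$ and consecutive $Q,Q'$ is not well posed. In your reverse direction, the set $K$ of all routes coherent with both $R$ and $R'$ is not obviously pairwise coherent, so $K\cup\{R\}$ is not obviously a clique; but as the theorem is actually stated that direction is trivial anyway --- once the maximal cliques $C$ and $C'$ are given and differ by a single route each, the corresponding simplices already share a codimension-one face, with no appeal to (a) or (b).
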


Given a triangulation $\calT$ of the flow polytope $\calF_G(\bfu)$, we can construct the {\em dual graph} of the triangulation, $D(\calT)$, with vertex set $\{\Delta\,\vert\, \Delta \textup{ is a simplex in }\calT\}$ and edges between two simplices if they are adjacent. The main focus of this paper is to determine the structure of~$D(\calT_{n}^{\textup{LRL}})$.

\subsection{Integer flows}
\label{sec:integerflows}

For a graph $G$ with vertices $\{1,\hdots,n\}$, define its {\em shifted in-degree vector} to be 
\[\bfd=\left(0,d_2,\hdots,d_{n-1},-\sum_{i=2}^{n-1} d_i\right),\]
where $d_i=\textsf{indeg}_G(i)-1$ for an inner vertex $i$. For $\cZig_n$, \(\bfd=\big(0,1,1,\hdots,1,-(n-2)\big)\).

M\'esz\'aros, Morales, and Striker \cite[Section~7]{mms19} show that for any framing $F$, there is a bijection between $\calT_G^F$ and the set $\calF_{G}^{\bbZ}(\bfd)$ of integer flows on $G$ with net flow vector $\bfd$, which we discuss in Section~\ref{sec:groves}. 
We will again simplify notation and write $\calF_n^{\bbZ}$ for $\calF_{\cZig_n}^{\bbZ}(\bfd)$.

\subsection{An example} 
\label{sec:example}

Let $G=\cZig_4$. The flow polytope $\calP=\mathcal{F}_G(\bfu)$ lives in $\bbR^7$ because there are seven edges in $G$. However, the three independent constraints given by the conservation of flow equations embed $\calP$ in a four-dimensional subspace. As such, a simplex in a triangulation of $\calP$ is defined by its five vertices.  The polytope $\calP$ is the convex hull of the eight routes in~$G$, labelled  
\hspace{.02in}{\raisebox{.02in}{\squared{1}}}\hspace{.02in} 
through 
\hspace{.02in}{\raisebox{.02in}{\squared{8}}}\hspace{.01in} and shown in Figure~\ref{fig:zig4unitflows}. 

\begin{figure}
\begin{center}
    \begin{minipage}{2.05in}
\begin{tikzpicture}[scale=0.61]
	\filldraw[black] (0,0) circle (2pt) node[](a1) {};
	\filldraw[black] (2,0) circle (2pt) node[](a2) {};
	\filldraw[black] (4,0) circle (2pt) node[](a3) {};
	\filldraw[black] (6,0) circle (2pt) node[](a4) {};


    \draw[-]    (a1.center) to[out=40,in=140] (a2.center);
	\draw[-]    (a2.center) to[out=30,in=150] (a4.center);	
\node[shape=rectangle,draw,inner sep=2pt](name) at (7,0) {\tiny 8};
\end{tikzpicture}  

\vspace{.02in}
\begin{tikzpicture}[scale=0.61]
	\filldraw[black] (0,0) circle (2pt) node[](a1) {};
	\filldraw[black] (2,0) circle (2pt) node[](a2) {};
	\filldraw[black] (4,0) circle (2pt) node[](a3) {};
	\filldraw[black] (6,0) circle (2pt) node[](a4) {};
\begin{scope}[node distance=0cm]
	\draw (a1.center)--(a2.center);		

	\draw[-]    (a2.center) to[out=30,in=150] (a4.center);	
\end{scope}
\node[shape=rectangle,draw,inner sep=2pt](name) at (7,0) {\tiny 7};
\end{tikzpicture}

\vspace{.02in}
\begin{tikzpicture}[scale=0.61]
	\filldraw[black] (0,0) circle (2pt) node[](a1) {};
	\filldraw[black] (2,0) circle (2pt) node[](a2) {};
	\filldraw[black] (4,0) circle (2pt) node[](a3) {};
	\filldraw[black] (6,0) circle (2pt) node[](a4) {};


	\draw[-]    (a1.center) to[out=30,in=150] (a3.center);
	\draw[-]    (a3.center) to[out=40,in=140] (a4.center);	
\node[shape=rectangle,draw,inner sep=2pt](name) at (7,0) {\tiny 6};
\end{tikzpicture}  

\vspace{.1in}
\begin{tikzpicture}[scale=0.61]
	\filldraw[black] (0,0) circle (2pt) node[](a1) {};
	\filldraw[black] (2,0) circle (2pt) node[](a2) {};
	\filldraw[black] (4,0) circle (2pt) node[](a3) {};
	\filldraw[black] (6,0) circle (2pt) node[](a4) {};

	\draw[-] (a2.center)--(a3.center);		

    \draw[-]    (a1.center) to[out=40,in=140] (a2.center);
	\draw[-]    (a3.center) to[out=40,in=140] (a4.center);	
\node[shape=rectangle,draw,inner sep=2pt](name) at (7,0) {\tiny 5};
\end{tikzpicture}  
\end{minipage}%
\begin{minipage}{2.05in}
\vspace{.1in}
\begin{tikzpicture}[scale=0.61]
	\filldraw[black] (0,0) circle (2pt) node[](a1) {};
	\filldraw[black] (2,0) circle (2pt) node[](a2) {};
	\filldraw[black] (4,0) circle (2pt) node[](a3) {};
	\filldraw[black] (6,0) circle (2pt) node[](a4) {};

	\draw[-] (a1.center)--(a2.center);		
	\draw[-] (a2.center)--(a3.center);		

	\draw[-]    (a3.center) to[out=40,in=140] (a4.center);	
\node[shape=rectangle,draw,inner sep=2pt](name) at (7,0) {\tiny 4};
\end{tikzpicture}  

\vspace{.02in}
\begin{tikzpicture}[scale=0.61]
	\filldraw[black] (0,0) circle (2pt) node[](a1) {};
	\filldraw[black] (2,0) circle (2pt) node[](a2) {};
	\filldraw[black] (4,0) circle (2pt) node[](a3) {};
	\filldraw[black] (6,0) circle (2pt) node[](a4) {};

	\draw[-] (a3.center)--(a4.center);		

	\draw[-]    (a1.center) to[out=30,in=150] (a3.center);
\node[shape=rectangle,draw,inner sep=2pt](name) at (7,0) {\tiny 3};
\end{tikzpicture}  

\vspace{.1in}
\begin{tikzpicture}[scale=0.61]
	\filldraw[black] (0,0) circle (2pt) node[](a1) {};
	\filldraw[black] (2,0) circle (2pt) node[](a2) {};
	\filldraw[black] (4,0) circle (2pt) node[](a3) {};
	\filldraw[black] (6,0) circle (2pt) node[](a4) {};

	\draw[-] (a2.center)--(a3.center);		
	\draw[-] (a3.center)--(a4.center);		

	\draw[-]    (a1.center) to[out=40,in=140] (a2.center);
\node[shape=rectangle,draw,inner sep=2pt](name) at (7,0) {\tiny 2};
\end{tikzpicture}  

\vspace{.1in}
\begin{tikzpicture}[scale=0.61]
	\filldraw[black] (0,0) circle (2pt) node[](a1) {};
	\filldraw[black] (2,0) circle (2pt) node[](a2) {};
	\filldraw[black] (4,0) circle (2pt) node[](a3) {};
	\filldraw[black] (6,0) circle (2pt) node[](a4) {};

	\draw[-] (a1.center)--(a2.center);		
	\draw[-] (a2.center)--(a3.center);		
	\draw[-] (a3.center)--(a4.center);		

\node[shape=rectangle,draw,inner sep=2pt](name) at (7,0) {\tiny 1};
\end{tikzpicture}  
\end{minipage}
\end{center}
\caption{$\cZig_4$'s routes.}
\label{fig:zig4unitflows}
\end{figure}

The LRL framing induces a total ordering on the prefixes and suffixes at vertices 2 and 3 of~$G$. At vertex $2$, the prefixes are ordered $y_1\prec x_1$ and the suffixes are ordered $x_2x_3 \prec x_2y_3 \prec y_2$. At vertex $3$, the prefixes are ordered $y_2\prec y_1x_2\prec x_1x_2$ and the suffixes are ordered $x_3 \prec y_3$.

The normalized volume of $\calP$ is 5, so every triangulation of $\calP$ consists of five simplices. We generate the DKK triangulation of~$\calP$ for the LRL framing, consisting of maximal cliques of routes, which specify the vertices of each simplex; see Figure~\ref{fig:zig4example}. We label the simplices {\triangled{$\mathsf{A}$}} through {\triangled{$\mathsf{E}$}}. 

\begin{figure}
\begin{center}
    \input{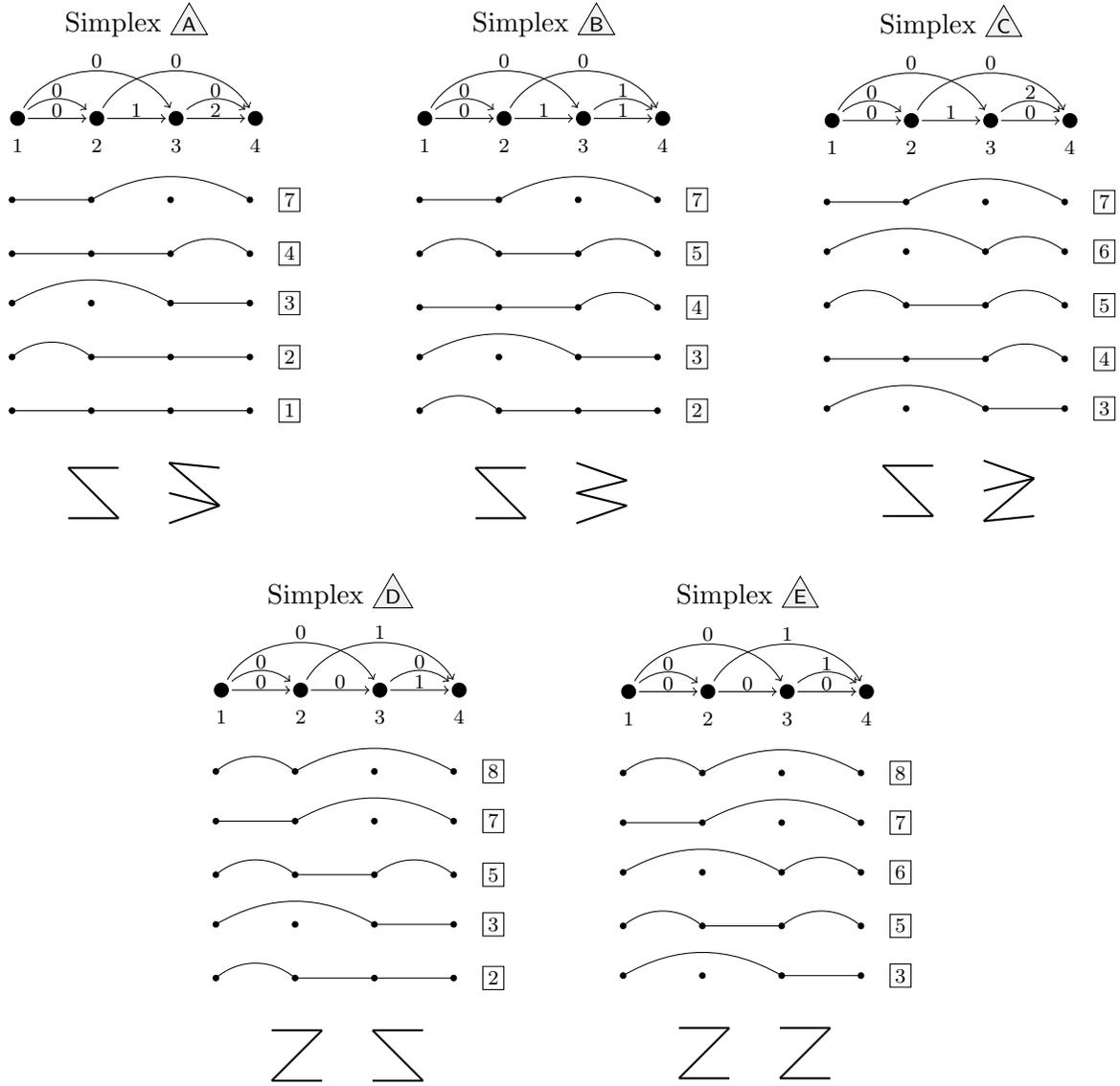}
\end{center}
\caption{The combinatorics of the DKK triangulation $\calT^{\textup{LRL}}_G$ of $G=\cZig_4$. Each simplex corresponds to an integer flow $\phi\in\calF_G^{\bbZ}(0,1,2,-3)$, a maximal clique $C$ of five routes, and a grove $\Gamma$ of two noncrossing bipartite trees.}
    \label{fig:zig4example}
\end{figure}

Note that routes \hspace{.02in}\raisebox{0.01in}{\squared{3}}\hspace{.02in} and \hspace{.02in}\raisebox{0.01in}{\squared{7}}\hspace{.02in} appear as vertices in every simplex. If we project onto the appropriate two-dimensional subspace, we can visualize the adjacency relations between the simplices in $\calT_4^{\textup{LRL}}$ in Figure~\ref{fig:zig4projection}; the other three vertices of each simplex are at its corners. (The reader should note that all vertices of $\calP$ lie on its boundary; it is only because of the projection that vertex \hspace{.02in}\raisebox{0.01in}{\squared{5}}\hspace{.02in} appears to be in the interior of~$\calP$.)

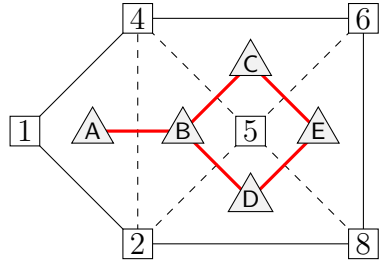
\begin{figure}
\begin{center}
\begin{tikzpicture}[scale=1.5]
	\draw[-] (0,0)--(1,1)--(3,1)--(3,-1)--(1,-1)--(0,0);
    \draw[-,dashed] (1,1)--(1,-1);
    \draw[-,dashed] (1,1)--(3,-1);
    \draw[-,dashed] (3,1)--(1,-1);
	\draw[-,very thick,color=red] (0.6,0)--(1.4,0)--(2,0.6)--(2.6,0)--(2,-0.6)--(1.4,0);
    \node[regular polygon,regular polygon sides=4,draw,inner sep=0.1pt,fill=white](name) at (0,0) {1};
    \node[regular polygon,regular polygon sides=4,draw,inner sep=0.1pt,fill=white](name) at (1,1) {4};
    \node[regular polygon,regular polygon sides=4,draw,inner sep=0.1pt,fill=white](name) at (1,-1) {2};
    \node[regular polygon,regular polygon sides=4,draw,inner sep=0.1pt,fill=white](name) at (2,0) {5};
    \node[regular polygon,regular polygon sides=4,draw,inner sep=0.1pt,fill=white](name) at (3,1) {6};
    \node[regular polygon,regular polygon sides=4,draw,inner sep=0.1pt,fill=white](name) at (3,-1) {8};
	\node[regular polygon,regular polygon sides=3, inner sep=0.1pt,draw,fill=black!5] at (0.6,0) {\scriptsize $\mathsf{A}$};
	\node[regular polygon,regular polygon sides=3, inner sep=0.1pt,draw,fill=black!5] at (1.4,0) {\scriptsize $\mathsf{B}$};
	\node[regular polygon,regular polygon sides=3, inner sep=0.1pt,draw,fill=black!5] at (2,0.6) {\scriptsize $\mathsf{C}$};
	\node[regular polygon,regular polygon sides=3, inner sep=0.1pt,draw,fill=black!5] at (2,-0.6) {\scriptsize $\mathsf{D}$};
	\node[regular polygon,regular polygon sides=3, inner sep=0.1pt,draw,fill=black!5] at (2.6,0) {\scriptsize $\mathsf{E}$};
\end{tikzpicture}
\end{center}
    \caption{The adjacency structure of the five simplices of $\calT_4^{\textup{LRL}}$.}
    \label{fig:zig4projection}
\end{figure}

We can see the adjacency between Simplex {\triangled{$\mathsf{A}$}} and Simplex {\triangled{$\mathsf{B}$}} as exchanging route \hspace{.02in}\raisebox{0.01in}{\squared{1}}\hspace{.02in} ($x_1x_2x_3$) for route \hspace{.02in}\raisebox{0.01in}{\squared{5}}\hspace{.02in} ($y_1x_2y_3$). This is a manifestation of Theorem~\ref{thm:bump} because setting $i=2$, $P=x_1$, $Q=x_2x_3$, $P'=y_1$, and $Q'=x_2y_3$, we see that $P'\prec P$ and $Q\prec Q'$ as consecutive prefixes and suffixes at vertex $2$. 

\subsection{Our Main Results}

When we concern ourselves only with the graph $G=\cZig_n$ and the length-reverse-length framing, we can understand the structure of the dual graph $D(\calT_n^{\textup{LRL}})$. 

We are able to show that adjacency of simplices in this triangulation translates to a predictable numerical condition between the corresponding integer flows, which we call an elementary move. Here is how an elementary move applies to a flow $\phi$.

\begin{definition}
\label{def:flowmove}
    Define the {\em elementary moves} $m_i^+$ and $m_i^-$ from $\calF_n^\bbZ$ to $\calF_n^\bbZ$ for $2\leq i\leq n-1$ as follows. 
    
    If $\phi(x_{i})>0$ and $\phi(y_{i+1})>0$, define $m_i^+(\phi)$ to be the flow that subtracts one from $\phi(x_i)$ and $\phi(y_{i+1})$ and adds one to $\phi(x_{i+2})$ and $\phi(y_{i})$. If $\phi(x_i)=0$ or $\phi(y_{i+1})=0$, $m_i^+$ does not apply to $\phi$.

    If $\phi(x_{i+2})>0$ and $\phi(y_{i})>0$, define $m_i^-(\phi)$ to be the flow that subtracts one from $\phi(x_{i+2})$ and $\phi(y_{i})$ and adds one to $\phi(x_i)$ and $\phi(y_{i+1})$. If $\phi(x_{i+2})=0$ or $\phi(y_{i})=0$, $m_i^-$ does not apply to $\phi$.

    (When $i$ is $n-2$ or $n-1$, ignore all conditions and operations on nonsensical values such as $\phi(x_n)$, $\phi(y_n)$, or $\phi(x_{n+1})$.)
\end{definition}

Consider the integer flows corresponding to the simplices in Section~\ref{sec:example}. We~can compute that $m_2^+\big(\raisebox{-.02in}{\,\triangled{$\mathsf{B}$}\,}\big)=\raisebox{-.02in}{\triangled{$\mathsf{D}$}}$, $m_3^+\big(\raisebox{-.02in}{\,\triangled{$\mathsf{B}$}\,}\big)=\raisebox{-.02in}{\triangled{$\mathsf{C}$}}$, and $m_3^-\big(\raisebox{-.02in}{\,\triangled{$\mathsf{B}$}\,}\big)=\raisebox{-.02in}{\triangled{$\mathsf{A}$}}$. On the other hand, $m_2^-$ does not apply to \raisebox{-.02in}{\triangled{$\mathsf{B}$}} because $\phi(y_2)=0$. It is not a coincidence that we have found all the simplices adjacent to \raisebox{-.02in}{\triangled{$\mathsf{B}$}} in the triangulation. Indeed, we will prove the following result.

\begin{restatable}{theorem}{thmElementaryMoves}
\label{thm:ElementaryMoves}
    If two simplices of $\calT_n^{LRL}$ are adjacent, then their corresponding integer flows differ by one of the elementary moves.
\end{restatable}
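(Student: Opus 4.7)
The plan is to invoke Theorem~\ref{thm:bump} to translate adjacency in $\calT_n^{\textup{LRL}}$ into a DKK route swap, analyze this swap using the block structure of the LRL framing, and then apply the grove-based description of the MMS bijection from Section~\ref{sec:groves} to track the resulting flow change.

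By Theorem~\ref{thm:bump}, two adjacent simplices correspond to cliques $C, C'$ that differ by a single pair of routes $R = PiQ$ and $R' = P'iQ'$ sharing an inner vertex $i$, at which $P,P'$ are consecutive prefixes and $Q,Q'$ are consecutive suffixes in the LRL total order. The LRL framing orders incoming edges $y_{i-2}\prec x_{i-1}$ and outgoing edges $x_i\prec y_i$, so the prefix ordering at $i$ partitions into a contiguous $y_{i-2}$-block followed by a contiguous $x_{i-1}$-block, and the suffix ordering likewise partitions into an $x_i$-block and a $y_i$-block. A case analysis on whether the consecutive pair $(P,P')$ (resp.\ $(Q,Q')$) straddles a block boundary or lies inside a single block cuts the theorem into tractable pieces: when $P, P'$ share their last edge into $i$, pushing the adjacency one step back along the shared edge preserves consecutiveness of the suffixes because the extensions fall inside a single contiguous sub-block. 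Iterating this reduction (and its dual on the suffix side), I would reduce to a ``rhombus'' configuration in which $P$ ends in $y_{i-2}$, $P'$ ends in $x_{i-1}$, $Q$ begins with $x_i$, and $Q'$ begins with $y_i$, with the remaining prefix/suffix data forced into extremal forms in their respective blocks. The ``ignore nonsensical values'' convention of Definition~\ref{def:flowmove} handles the boundary cases in which the rhombus extends outside $\cZig_n$.

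Next I would translate $C$ and $C'$ into their grove encodings $\Gamma_C, \Gamma_{C'}$ using the bijection of Section~\ref{sec:groves}. The rhombus swap corresponds to a local $2\times 2$ rectangular move on the grove: two edges from each of two consecutive noncrossing bipartite trees in the grove are rewritten, while every other edge of $\Gamma_C$ is unchanged. Applying the grove-to-flow dictionary, this rectangular move induces the four-edge update $\phi(x_j)\mapsto\phi(x_j)-1$, $\phi(y_{j+1})\mapsto\phi(y_{j+1})-1$, $\phi(x_{j+2})\mapsto\phi(x_{j+2})+1$, $\phi(y_j)\mapsto\phi(y_j)+1$ for the index $j$ corresponding to the rhombus's location. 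This is precisely the elementary move $m_j^+$ of Definition~\ref{def:flowmove}, or $m_j^-$ if the roles of $C$ and $C'$ are reversed.

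The principal technical obstacle will be the final step. Naively, the swap $R \leftrightarrow R'$ can alter edge usage over the whole length of the two routes, since in the rhombus configuration $P$ and $P'$ (and $Q,Q'$) are forced into drastically different extremal forms (for instance, the largest prefix ending in $y_{i-2}$ is all-slack up to $i-2$ while the smallest prefix ending in $x_{i-1}$ is an alternating-nonslack zigzag). The grove-to-flow bijection must collapse all contributions outside the rhombus into cancellations. Pinning down exactly which grove edges are modified by the rectangular move, verifying that they are the ones controlling the four flow values $\{\phi(x_j),\phi(y_j),\phi(x_{j+2}),\phi(y_{j+1})\}$, and correctly accounting for boundary truncations is where the bulk of the technical effort will lie.
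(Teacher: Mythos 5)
Your high-level plan matches the paper's structure (Theorem~\ref{thm:bump} $\to$ groves $\to$ flows), but it has a concrete gap and a misconception about the combinatorics.

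The \emph{reduction to a rhombus} step does not work. You want to slide the pivot backward until $P,P'$ end in different edges into $i$, and forward until $Q,Q'$ leave $i$ along different edges. But these two demands are generically incompatible: $R$ and $R'$ share a maximal path of vertices of which $i$ is one, and at the \emph{first} vertex $j$ of that path the prefixes diverge while the suffixes still coincide from $j$ to the last vertex $k$, whereas at $k$ the suffixes diverge while the prefixes share the whole tail from $j$ to $k$. Only when $j=k$ do you get your ``rhombus.'' The paper never forces this: it fixes $i=k$ so that $Q,Q'$ begin $x_i$ and $y_i$, and lets $P,P'$ be arbitrary consecutive prefixes at $i$ — no claim is made (and none is needed) that $P$ and $P'$ end in $y_{i-2}$ and $x_{i-1}$. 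Your assertion that the remaining prefix/suffix data are forced to ``extremal forms'' (all-slack vs.\ alternating-nonslack) is simply not the situation the theorem is about.

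Your picture of the grove move is also off by one: the elementary move touches \emph{three} consecutive trees, not two. In $\gamma_i$ an edge $l_j r_1$ is replaced by $l_{j-1} r_2$; in $\gamma_{i+1}$ the top left vertex (the prefix $Px_i$) is deleted together with its edge; in $\gamma_{i+2}$ a bottom left vertex (the prefix $P'y_i$) is inserted. That the change stops there — and does not propagate down the whole length of $R$ and $R'$ — is exactly the ``principal technical obstacle'' you flag in your last paragraph, and you do not resolve it. The paper's resolution is the crux of the argument: with $i$ chosen as the last shared vertex, one shows that $R$ must continue $Px_iy_{i+1}$, $R'$ must continue $P'y_ix_{i+2}$ (forced by noncrossingness in $\gamma_{i+1}$ and $\gamma_{i+2}$, and by the fact that otherwise removing $R$ would delete more than one route from $C$), that both then meet vertex $i+3$, and that the two prefixes $Px_iy_{i+1}$ and $P'y_ix_{i+2}$ occupy the \emph{same} position in the total order of prefixes at $i+3$ (one is topmost in the $y_{i+1}$-block of the left vertices, the other bottommost in the $x_{i+2}$-block, and these are adjacent positions once one left vertex is deleted from $\gamma_{i+1}$ and one inserted in $\gamma_{i+2}$). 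The single-route-difference constraint also prevents the tracked prefix from bifurcating at any $j\geq i+3$, so $\gamma_j=\gamma_j'$ for all $j\geq i+3$. Without this positional-coincidence argument the proof is incomplete, and the block case analysis you propose does not substitute for it.
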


The elementary moves play the role of $n-2$ orthogonal directions along which one simplex can be adjacent to another, which allows us to embed the dual graph $D(\calT_n^{\textup{LRL}})$ in an integer lattice. Let $\Square_n$ be the $n$-dimensional square grid graph.

\begin{restatable}{theorem}{thmEmbedding}
\label{thm:embedding}
    The dual graph $D(\calT_n^{\textup{LRL}})$ is a subgraph of \hspace{.02in}$\Square_{n-2}$.
\end{restatable}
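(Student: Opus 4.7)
The plan is to leverage Theorem~\ref{thm:ElementaryMoves} together with the M\'esz\'aros--Morales--Striker bijection between simplices of $\calT_n^{\textup{LRL}}$ and integer flows in $\calF_n^\bbZ$ in order to construct an explicit injective embedding $\iota\colon \calF_n^\bbZ\to\bbZ^{n-2}$ under which every elementary move $m_k^+$ shifts a single coordinate by $+1$. There are $n-2$ inner vertices and hence $n-2$ elementary-move directions, matching the $n-2$ coordinate directions of $\Square_{n-2}$, so producing such an $\iota$ suffices.

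First I would identify enough structure on integer flows to guide the choice of coordinates. At vertex~$1$ of $\cZig_n$, conservation with $a_1=0$ together with non-negativity of $\phi$ force $\phi(x_1)=\phi(y_0)=\phi(y_1)=0$ for every $\phi\in\calF_n^\bbZ$. Iterating conservation at the inner vertices then yields a closed-form expression of each $\phi(x_i)$ in terms of the $y$-flows, so the tuple $(\phi(y_2),\ldots,\phi(y_{n-1}))$ already determines $\phi$ completely. This motivates defining
\[
\iota(\phi)_i := \sum_{j=2}^{i}\phi(y_j),\qquad i=2,\ldots,n-1,
\]
which is injective on $\calF_n^\bbZ$ because the passage from $(\phi(y_j))_{j=2}^{n-1}$ to its partial sums is an invertible triangular linear change of coordinates.

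The key calculation verifies that $\iota(m_k^+(\phi))-\iota(\phi)=\mathbf{e}_k$ for every applicable move. From Definition~\ref{def:flowmove}, $m_k^+$ increases $\phi(y_k)$ by $1$ and decreases $\phi(y_{k+1})$ by $1$ while fixing every other $\phi(y_j)$, with the $\phi(y_n)$ term suppressed when $k=n-1$---harmlessly, since $y_n$ lies outside every $\iota(\phi)_i$. A short telescoping shows that in $\iota(\phi)_i$ the $+1$ and $-1$ cancel whenever $i\geq k+1$, only the $+1$ contributes when $i=k$, and neither term contributes when $i<k$. I expect this coordinate verification---in particular handling the boundary cases $k\in\{n-2,n-1\}$ of Definition~\ref{def:flowmove}, where certain summands are dropped---to be the main technical step, though each case follows the same telescoping pattern.

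Combining the ingredients: given any edge of $D(\calT_n^{\textup{LRL}})$, Theorem~\ref{thm:ElementaryMoves} provides an elementary move $m_k^\pm$ relating the corresponding integer flows, and $\iota$ sends the pair to lattice points differing by $\pm\mathbf{e}_k$---precisely an edge of $\Square_{n-2}$. Together with injectivity of $\iota$, this realizes $D(\calT_n^{\textup{LRL}})$ as a subgraph of $\Square_{n-2}$, completing the proof.
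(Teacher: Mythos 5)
Your proof is correct, and it takes a genuinely different route from the paper's. The paper defines the embedding \emph{implicitly}: starting from the grove $\Gamma_0$ with flow $\phi_0=(0,1,\ldots,n-2,0,\ldots,0)$, it uses the connectivity of the dual graph (Lemma~\ref{lem:seqElemMoves}) to express every simplex as the result of a sequence of elementary moves $(m_{j_1}^{\epsilon_1},\ldots,m_{j_l}^{\epsilon_l})$ applied to $\Gamma_0$, places that simplex at the lattice point $\sum_k \epsilon_k\bfe_{j_k}$, and then must verify that this assignment is well-defined (independent of the chosen sequence of moves). You instead construct an \emph{explicit} linear embedding $\iota(\phi)_i=\sum_{j=2}^i\phi(y_j)$ directly from the integer flow, which bypasses both the need for connectivity of $D(\calT_n^{\textup{LRL}})$ and the well-definedness check, replacing them with the cleaner observations that $\iota$ is an invertible triangular change of coordinates on the $y$-flows (which already determine $\phi$ by conservation, since $\phi(x_1)=\phi(y_0)=\phi(y_1)=0$ forces all other $x$-flows) and that $m_k^+$ telescopes to a $+\bfe_k$ shift. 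The two embeddings in fact agree---$\iota(\phi_0)=\mathbf{0}$ matches the paper's choice of origin---but your closed form makes the lattice structure and injectivity visible without an auxiliary base-point argument; the paper's formulation by contrast emphasizes the shelling-order coordinates it will reuse in Section~\ref{sec:hstar}.
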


We see this in our example in Section~\ref{sec:example}---the dual graph $D(\calT_4^{\textup{LRL}})$ in Figure~\ref{fig:zig4projection} is embeds in $\bbZ^2$ in a straightforward manner with \raisebox{-.02in}{\triangled{$\mathsf{A}$}} at $(0,0)$, \raisebox{-.02in}{\triangled{$\mathsf{B}$}} at $(1,0)$, \raisebox{-.02in}{\triangled{$\mathsf{C}$}} at $(2,0)$, \raisebox{-.02in}{\triangled{$\mathsf{D}$}} at $(1,1)$, and \raisebox{-.02in}{\triangled{$\mathsf{E}$}} at $(2,1)$.

\smallskip
This is half the story. We also present a numerical method for determining which neighboring points in the integer lattice correspond to adjacent simplices in $D(\calT_n^{\textup{LRL}})$.  

\begin{definition}
\label{def:flowoffsets}
    Given an integer flow $\phi\in\calF_n^\bbZ$ and an inner vertex $i$, define the {\em sequence of offsets of $\phi$ at $i$}, $\bfz^{(i)}(\phi)=(z_{i+1},\hdots,z_{n-1})$, as follows. 
    
    If $i=n-1$, define $\bfz^{(i)}(\phi)$ to be the empty sequence and STOP. 
    
    Otherwise, initialize $z_{i+1}=\phi(y_{i+1})$. Then, as long as some entry of $\bfz^{(i)}(\phi)$ is not defined, find the largest index $j$ for which $z_{j}$ is defined, and compute subsequent terms: 
    
    If $z_j<0$, define $z_{j+1}=z_j+\phi(y_{j+1})$. 
    
    If $z_j>0$, define $z_{j+1}=\varnothing$ and, if $j<n-2$, $z_{j+2}=z_j-\phi(x_{j+2})$. 
    
    If $z_j=0$, define $z_k=0$ for all $k\geq j$. 
\end{definition}

\begin{restatable}{theorem}{thmAdjacencyCriterion}
\label{thm:AdjacencyCriterion}
Let $\phi\in\calF_n^\bbZ$ be an integer flow where $\phi'=m_i^+(\phi)$ is defined.  The two simplices in $\calT^{\textup{LRL}}$ corresponding to $\phi$ and $\phi'$ are adjacent in $D(\calT_n^{\textup{LRL}})$ if and only if the sequence of offsets of $\phi$ at $i$ 
contains no zeroes.
\end{restatable}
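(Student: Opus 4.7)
The plan is to use the M\'esz\'aros--Morales--Striker bijection between integer flows in $\calF_n^\bbZ$ and maximal cliques in $\calC_{\cZig_n}^{\textup{LRL}}$, together with Theorem~\ref{thm:bump}: the simplices corresponding to $\phi$ and $\phi'=m_i^+(\phi)$ are adjacent if and only if the cliques $C(\phi)$ and $C(\phi')$ differ in exactly one route, with the swap realized as consecutive prefixes and suffixes at a common inner vertex. Since $m_i^+$ subtracts a unit from $\phi(x_i)$ and $\phi(y_{i+1})$ and adds a unit to $\phi(y_i)$ and $\phi(x_{i+2})$, the natural single-route-swap candidate is to exchange a route $R\in C(\phi)$ using the edges $x_i,y_{i+1}$ for a route $R'\in C(\phi')$ using the edges $y_i,x_{i+2}$. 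The task is then to show that this exchange really is the only difference between the two canonical decompositions precisely when $\bfz^{(i)}(\phi)$ contains no zeros.

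My first step is to pinpoint the candidate routes $R$ and $R'$ using the LRL orderings $y_{i-2}\prec_{\textup{In}(i)} x_{i-1}$ and $x_i\prec_{\textup{Out}(i)} y_i$, identify the common inner vertex at which the swap $R\leftrightarrow R'$ is witnessed in the sense of Theorem~\ref{thm:bump}, and verify that if no downstream reassignment is forced then the swap produces consecutive prefixes and suffixes there. The heart of the proof is then to interpret $\bfz^{(i)}(\phi)$ as a ledger for the cascade the swap induces on the canonical pairing past vertex $i+3$. The initialization $z_{i+1}=\phi(y_{i+1})$ records the unit of flow displaced from the nonslack edge $y_{i+1}$, and the recursion alternates between two moves: when $z_j>0$ there is excess at position $j$ that the next slack outgoing edge $x_{j+2}$ must absorb, so $z_{j+2}=z_j-\phi(x_{j+2})$ while $z_{j+1}=\varnothing$ because no reassignment is called for at the intermediate vertex; when $z_j<0$ there is deficit at $j$ that the next nonslack edge $y_{j+1}$ must fill, so $z_{j+1}=z_j+\phi(y_{j+1})$. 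This alternation mirrors the interleaved roles of slack and nonslack edges in $\cZig_n$. A run of nonzero offsets means the ledger never balances and the cascade of forced pairing shifts resolves without changing any additional route, leaving exactly one route swap. An offset of $0$ means the ledger balances prematurely against a pairing boundary of the canonical decomposition, forcing a second route swap so that $|C(\phi)\triangle C(\phi')|\geq 4$ and the simplices are non-adjacent.

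The main obstacle will be making this cascade analysis precise: rigorously establishing the vertex-by-vertex dictionary between the arithmetic of Definition~\ref{def:flowoffsets} and the reassignment of prefixes and suffixes forced by the LRL pairing rule at each successive inner vertex. In particular, the unusual alternating $+1$/$+2$ index jumps in the offset recursion must be read off the zigzag structure itself, and keeping track of when the pairing at a vertex can ``absorb'' a unit of shifted flow versus when it must forward the disturbance to the next vertex requires delicate bookkeeping. I expect this dictionary to be cleanest in the grove formalism developed earlier in the paper, where the MMS decomposition is visualized as a system of noncrossing bipartite trees and the perturbation induced by $m_i^+$ becomes a local edge exchange whose reach is dictated by precisely the sequence $\bfz^{(i)}(\phi)$.
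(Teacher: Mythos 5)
Your plan is directionally correct and you correctly anticipate that the grove formalism is the right vehicle, but the proposal stops exactly where the work begins: the ``delicate bookkeeping'' you flag in the final paragraph is the entire content of the theorem, and you have not carried it out. The paper resolves it not by a vertex-by-vertex ledger analysis on the flow side but by the tracking machinery in groves (Section~\ref{sec:tracking}). Concretely: the elementary move $m_i^+$ on the grove $\Gamma$ deletes exactly the topmost edge $\eta$ incident to $r_1$ in $\gamma_i$; Lemma~\ref{lem:trackable} shows that $\eta$ is \emph{trackable} if and only if the prefix $Pe$ it encodes belongs to a unique route of the maximal clique $C$; Lemma~\ref{lem:nozeroes} shows that $\eta$ is trackable if and only if the grove offset sequence $\bfz^{(i)}(\Gamma)$ has no zeroes; and Theorem~\ref{thm:OffsetsEqual} establishes by induction that $\bfz^{(i)}(\Gamma)=\bfz^{(i)}(\phi)$. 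Putting these together, a zero in the offset sequence is precisely the certificate that multiple routes share the prefix $Pe$, so deleting $\eta$ changes more than one route, making $|C\triangle C'|\geq 4$; no zero means exactly one route changes, which is DKK adjacency by Theorem~\ref{thm:bump}.

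The gap in your proposal is that the statement ``an offset of $0$ means the ledger balances prematurely\dots forcing a second route swap'' is asserted, not proved. Your flow-side interpretation of $z_j$ as ``excess'' or ``deficit'' that an edge must ``absorb'' also does not match the paper's intended reading: in Definition~\ref{def:groveoffsets} the value $z_j$ is the signed distance, within tree $\gamma_j$, from the left vertex of the tracked prefix to the unique left vertex adjacent to both right vertices, and a zero means the tracked prefix has hit that branching vertex (i.e.\ is the prefix of at least two routes). Without that structural reading there is no rigorous link between the arithmetic of Definition~\ref{def:flowoffsets} and the combinatorics of clique difference, and you cannot close the equivalence. If you want to turn your outline into a proof, you should adopt exactly the sequence Lemma~\ref{lem:trackable} $\Rightarrow$ Lemma~\ref{lem:nozeroes} $\Rightarrow$ Theorem~\ref{thm:GroveAdjacency} $\Rightarrow$ Theorem~\ref{thm:OffsetsEqual}, rather than trying to run the cascade argument purely on the integer-flow side.
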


\begin{example}
\label{ex:adjacentflowexample}
    Consider the integer flows $\phi_1$ and $\phi_2$ in $\calF_6^{\bbZ}$ shown in Figure~\ref{fig:adjacentflows}. The elementary move $m_2^+$ applies to both $\phi_1$ and $\phi_2$, giving $\phi_1'$ and $\phi_2'$ respectively. The sequences of offsets are 
    \[\textup{$\bfz^{(2)}(\phi_1)=(2,\varnothing,-1)$ \qquad and \qquad $\bfz^{(2)}(\phi_2)=(1,\varnothing,0)$.}\]
    Theorem~\ref{thm:AdjacencyCriterion} shows that the simplices in $D(\calT_6^{\textup{LRL}})$ corresponding to $\phi_1$ and $\phi_1'$ are adjacent while that the simplices in $D(\calT_6^{\textup{LRL}})$ corresponding to $\phi_2$ and $\phi_2'$ are not adjacent. 
\end{example}

\begin{figure}
    \begin{tabular}{ccc}
\begin{tikzpicture}[scale=0.5]
    \node[] at (-1.4,0.8) {$\phi_1$};
    \filldraw[black] (0,0) circle (5pt) node[label=below:{\tiny $1$}](a1) {};
	\filldraw[black] (2,0) circle (5pt) node[label=below:{\tiny $2$}](a2) {};
	\filldraw[black] (4,0) circle (5pt) node[label=below:{\tiny $3$}](a3) {};
	\filldraw[black] (6,0) circle (5pt) node[label=below:{\tiny $4$}](a4) {};
    \filldraw[black] (8,0) circle (5pt) node[label=below:{\tiny $5$}](a5) {};
    \filldraw[black] (10,0) circle (5pt) node[label=below:{\tiny $6$}](a6) {};

    \draw[->] (a1)--(a2);		
	\draw[->] (a2)--(a3);		
	\draw[->] (a3)--(a4);		
	\draw[->] (a4)--(a5);		
	\draw[->] (a5)--(a6);		

	\draw[->]    (a1) to[out=40,in=140] (a2);
	\draw[->]    (a1) to[out=60,in=120] (a3);
	\draw[->]    (a2) to[out=60,in=120] (a4);	
	\draw[->]    (a3) to[out=60,in=120] (a5);	
	\draw[->]    (a4) to[out=60,in=120] (a6);	
	\draw[->]    (a5) to[out=40,in=140] (a6);	

    \node[] at (1,0.2) {\tiny $0$};
    \node[] at (3,0.2) {\tiny $1$};
    \node[] at (5,0.2) {\tiny $0$};
    \node[] at (7,0.2) {\tiny $1$};
    \node[] at (9,0.2) {\tiny $3$};
    \node[] at (1,0.71) {\tiny $0$};
    \node[] at (2,1.45) {\tiny $0$};
    \node[] at (4,1.45) {\tiny $0$};
    \node[] at (6,1.45) {\tiny $2$};
    \node[] at (8,1.45) {\tiny $0$};
    \node[] at (9,0.71) {\tiny $1$};

\end{tikzpicture}  
& &
  \begin{tikzpicture}[scale=0.5]
   \node[] at (-1.4,0.8) {$\phi_2$};
	\filldraw[black] (0,0) circle (5pt) node[label=below:{\tiny $1$}](a1) {};
	\filldraw[black] (2,0) circle (5pt) node[label=below:{\tiny $2$}](a2) {};
	\filldraw[black] (4,0) circle (5pt) node[label=below:{\tiny $3$}](a3) {};
	\filldraw[black] (6,0) circle (5pt) node[label=below:{\tiny $4$}](a4) {};
    \filldraw[black] (8,0) circle (5pt) node[label=below:{\tiny $5$}](a5) {};
    \filldraw[black] (10,0) circle (5pt) node[label=below:{\tiny $6$}](a6) {};

    \draw[->] (a1)--(a2);		
	\draw[->] (a2)--(a3);		
	\draw[->] (a3)--(a4);		
	\draw[->] (a4)--(a5);		
	\draw[->] (a5)--(a6);		

	\draw[->]    (a1) to[out=40,in=140] (a2);
	\draw[->]    (a1) to[out=60,in=120] (a3);
	\draw[->]    (a2) to[out=60,in=120] (a4);	
	\draw[->]    (a3) to[out=60,in=120] (a5);	
	\draw[->]    (a4) to[out=60,in=120] (a6);	
	\draw[->]    (a5) to[out=40,in=140] (a6);	

    \node[] at (1,0.2) {\tiny $0$};
    \node[] at (3,0.2) {\tiny $1$};
    \node[] at (5,0.2) {\tiny $1$};
    \node[] at (7,0.2) {\tiny $1$};
    \node[] at (9,0.2) {\tiny $1$};
    \node[] at (1,0.71) {\tiny $0$};
    \node[] at (2,1.45) {\tiny $0$};
    \node[] at (4,1.45) {\tiny $0$};
    \node[] at (6,1.45) {\tiny $1$};
    \node[] at (8,1.45) {\tiny $1$};
    \node[] at (9,0.71) {\tiny $2$};

\end{tikzpicture}
 \\
\begin{tikzpicture}[scale=0.5]
\node[] at (-1.4,0.8) {$\phi_1'$};
	\filldraw[black] (0,0) circle (5pt) node[label=below:{\tiny $1$}](a1) {};
	\filldraw[black] (2,0) circle (5pt) node[label=below:{\tiny $2$}](a2) {};
	\filldraw[black] (4,0) circle (5pt) node[label=below:{\tiny $3$}](a3) {};
	\filldraw[black] (6,0) circle (5pt) node[label=below:{\tiny $4$}](a4) {};
    \filldraw[black] (8,0) circle (5pt) node[label=below:{\tiny $5$}](a5) {};
    \filldraw[black] (10,0) circle (5pt) node[label=below:{\tiny $6$}](a6) {};

    \draw[->] (a1)--(a2);		
	\draw[->] (a2)--(a3);		
	\draw[->] (a3)--(a4);		
	\draw[->] (a4)--(a5);		
	\draw[->] (a5)--(a6);		

	\draw[->]    (a1) to[out=40,in=140] (a2);
	\draw[->]    (a1) to[out=60,in=120] (a3);
	\draw[->]    (a2) to[out=60,in=120] (a4);	
	\draw[->]    (a3) to[out=60,in=120] (a5);	
	\draw[->]    (a4) to[out=60,in=120] (a6);	
	\draw[->]    (a5) to[out=40,in=140] (a6);	

    \node[] at (1,0.2) {\tiny $0$};
    \node[] at (3,0.2) {\tiny $0$};
    \node[] at (5,0.2) {\tiny $0$};
    \node[] at (7,0.2) {\tiny $2$};
    \node[] at (9,0.2) {\tiny $3$};
    \node[] at (1,0.71) {\tiny $0$};
    \node[] at (2,1.45) {\tiny $0$};
    \node[] at (4,1.45) {\tiny $1$};
    \node[] at (6,1.45) {\tiny $1$};
    \node[] at (8,1.45) {\tiny $0$};
    \node[] at (9,0.71) {\tiny $1$};
\end{tikzpicture}
& &
\begin{tikzpicture}[scale=0.5]
\node[] at (-1.4,0.8) {$\phi_2'$};
	\filldraw[black] (0,0) circle (5pt) node[label=below:{\tiny $1$}](a1) {};
	\filldraw[black] (2,0) circle (5pt) node[label=below:{\tiny $2$}](a2) {};
	\filldraw[black] (4,0) circle (5pt) node[label=below:{\tiny $3$}](a3) {};
	\filldraw[black] (6,0) circle (5pt) node[label=below:{\tiny $4$}](a4) {};
    \filldraw[black] (8,0) circle (5pt) node[label=below:{\tiny $5$}](a5) {};
    \filldraw[black] (10,0) circle (5pt) node[label=below:{\tiny $6$}](a6) {};

    \draw[->] (a1)--(a2);		
	\draw[->] (a2)--(a3);		
	\draw[->] (a3)--(a4);		
	\draw[->] (a4)--(a5);		
	\draw[->] (a5)--(a6);		

	\draw[->]    (a1) to[out=40,in=140] (a2);
	\draw[->]    (a1) to[out=60,in=120] (a3);
	\draw[->]    (a2) to[out=60,in=120] (a4);	
	\draw[->]    (a3) to[out=60,in=120] (a5);	
	\draw[->]    (a4) to[out=60,in=120] (a6);	
	\draw[->]    (a5) to[out=40,in=140] (a6);	

    \node[] at (1,0.2) {\tiny $0$};
    \node[] at (3,0.2) {\tiny $0$};
    \node[] at (5,0.2) {\tiny $1$};
    \node[] at (7,0.2) {\tiny $2$};
    \node[] at (9,0.2) {\tiny $1$};
    \node[] at (1,0.71) {\tiny $0$};
    \node[] at (2,1.45) {\tiny $0$};
    \node[] at (4,1.45) {\tiny $1$};
    \node[] at (6,1.45) {\tiny $0$};
    \node[] at (8,1.45) {\tiny $1$};
    \node[] at (9,0.71) {\tiny $2$};
\end{tikzpicture}
\end{tabular}~~
    \caption{Two integer flows $\phi_1$ and $\phi_2$ and their respective images $\phi_1'$ and~$\phi_2'$ under $m_2^+$. See Example~\ref{ex:adjacentflowexample}.}
    \label{fig:adjacentflows}
\end{figure}
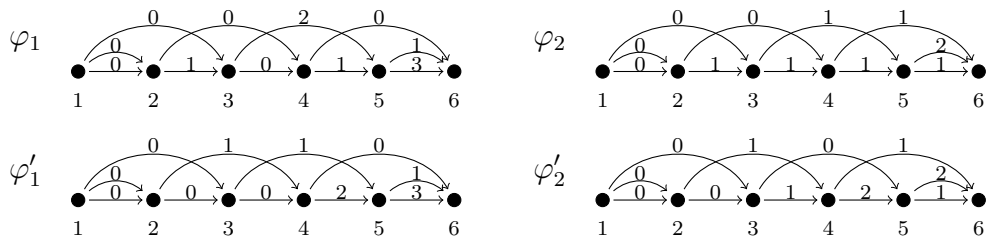   

The proofs of the theorems in this section will be provided in Section~\ref{sec:proofs}. They rely heavily on a third family of combinatorial objects---sequences of noncrossing bipartite trees we call groves---that are in bijection with the simplices and the integer flows. We have found this construct to be an essential part of this story. Indeed, the focus on groves as a combinatorial object worthy of study is a new perspective we bring to the discussion. The general theory of groves is developed and discussed in Section~\ref{sec:groves}. 

Section~\ref{sec:hstar} conjectures two new statistics---$\mathsf{sz}$ and $\mathsf{zs}$---for the $h^*$-polynomial of the flow polytope $\mathcal{F}_{\cZig_n}(\bfu)$ that arise from the structure of $D(\calT_n^{\textup{LRL}})$. Our examples show that $\mathsf{sz}$ and $\mathsf{zs}$ are distinct from (but equidistributed with) statistics used to find the $h^*$-polynomial in the past---the $\mathsf{swap}$ statistic given in \cite{coons19} and the descent statistic given in \cite{ajr20,dleon23}. We conclude this article with questions for future study.


\section{Groves}
\label{sec:groves}

\subsection{Introducing groves}

We now discuss another family of combinatorial objects that are in bijection with the maximal cliques and integer flows introduced in Section~\ref{sec:introduction}. Each object is a sequence of trees, so we affectionately call them {\em groves}\footnote[2]{These {\em flow groves} have no relation to Carroll and Speyer's {\em domino tiling groves}.}. Groves, referred to as ``tuples of noncrossing bipartite trees'' in \cite{mms19}, were introduced by Postnikov in unpublished work, were first used in publication by M\'ez\'aros and Morales in \cite{mm15}, and were used to establish the bijection between $\calT_G^F$ and $\calF_G^\bbZ(\bfd)$ by M\'ez\'aros, Morales, and Striker in \cite{mms19} where they form the backbone of framed Postnikov-Stanley triangulations. 

We wish to highlight groves as fundamental combinatorial objects in their own right because they provide a balance between the complexity of maximal cliques and the simplicity of integer flows. They appear to have just enough information about the structure of an integer flow to form the basis for key proofs in \cite{mms19} and here.

\medskip
Given a graph $G$ with vertex set $[n]$ and a framing $F$ on $G$, a {\em grove} $\Gamma=(\gamma_2,\hdots,\gamma_{n-1})$ consists of $n-2$ noncrossing bipartite trees. Each tree $\gamma_i$ has left vertices $l_1, l_2, \hdots, l_{p_i}$ and right vertices $r_1, r_2, \hdots, r_{o_i}$, where $o_i$ is the number $\lvert \textup{Out}(i)\rvert$ of outgoing edges from $i$ in $G$. The number $p_i$ of left vertices\footnote[3]{In Section~\ref{sec:CliquesToGroves} we see left vertices correspond to route prefixes, hence the variable $p_i$.} is determined by the structure of the previous trees $\gamma_2$ through $\gamma_{i-1}$ as follows. 

For each edge $e:h\row i$ in $G$, determine the position $q$ of $e$ in the total ordering $\prec_{\textup{Out}(h)}$ and define $\textup{deg}(e)$ to be the degree of $r_q$ in $\gamma_h$. (If $h$ is the source vertex, set $\textup{deg}(e)=1$.) Then $p_i=\sum_{e\in In(i)} \textup{deg}(e)$ is the sum of the degrees corresponding to incoming edges to $i$ in~$G$. 

An important consequence of this construction is that the tree $\gamma_i$ depends only on the structure of the graph $G$ at vertex $i$ and {\em before}, and not on vertices {\em after} vertex $i$.

We denote by $\calG_G^F$ the set of all groves associated to a graph $G$ with a framing $F$.  We will simplify notation by writing $\calG_n^{\textup{LRL}}$ for $\calG_{\cZig_n}^{\textup{LRL}}$.

\begin{example}
Let us determine the set $\calG_{4}^{\textup{LRL}}$ of groves $\Gamma=(\gamma_2,\gamma_3)$ for $\cZig_4$ with the LRL framing.

Tree $\gamma_2$ has two left vertices because its two incoming edges $y_0$ and $x_1$ start at the source vertex so they each contribute $1$ to the sum of the degrees. Further,  $\gamma_2$ has two right vertices $r_1$ and $r_2$ corresponding to the two outgoing edges $x_2$ and $y_2$, respectively. Therefore there are exactly two possible noncrossing bipartite trees for $\gamma_2$, shown here.

\begin{figure*}[h]
\raisebox{.11in}{$\gamma_2^{(1)}=$\hspace{.1in}}\begin{tikzpicture}[scale=0.75]
\draw[-,thick] (0,0) -- (1,0) -- (0,1) -- (1,1);
\end{tikzpicture} \qquad\qquad 
\raisebox{.11in}{$\gamma_2^{(2)}=$\hspace{.1in}}\begin{tikzpicture}[scale=0.75]
\draw[-,thick] (1,0) -- (0,0) -- (1,1) -- (0,1);
\end{tikzpicture}
\end{figure*}

Tree $\gamma_3$ must have two right vertices because vertex $3$ has two outgoing edges, $x_3$ and $y_3$. The number of left vertices in $\gamma_3$ depends on $\gamma_2$. We need to determine the contribution to the sum of the degrees from $y_1$ and $x_2$. The contribution from $y_1$ is always $1$ since it starts at vertex $1$. Since $x_2$ is first in $\prec_{\textup{Out}(2)}$, the contribution from $x_2$ depends on the degree of $r_1$ in $\gamma_2$ (its lower right vertex). When $\gamma_2=\gamma_2^{(1)}$, the degree of $r_1$ is 2, so there are three left vertices in $\gamma_3$. When $\gamma_2=\gamma_2^{(2)}$, the degree of $r_1$ is 1, so there are two left vertices in $\gamma_3$. 

Enumerating the possible noncrossing bipartite trees on either 3 and 2 or 2 and 2 vertices, we determine that $\calG_{4}^{\textup{LRL}}$ consists of the following five groves.

\begin{figure*}[h]
\begin{tikzpicture}[scale=0.65]
   
	\draw[-,thick] (0,1)--(1,1);		
	\draw[-,thick] (0,1)--(1,0);	
    \draw[-,thick] (0,0)--(1,0); 

    \draw[-,thick] (1.8,1.1)--(2.8,1);
    \draw[-,thick] (1.8,1.1)--(2.8,0.25);
    \draw[-,thick] (1.8,0.5)--(2.8,0.25);		
	\draw[-,thick] (1.8,-0.1)--(2.8,0.25);	

\end{tikzpicture}
\hspace{.4in}
\begin{tikzpicture}[scale=0.65]
   
	\draw[-,thick] (0,1)--(1,1);		
	\draw[-,thick] (0,1)--(1,0);	
    \draw[-,thick] (0,0)--(1,0); 

    \draw[-,thick] (1.8,1.1)--(2.8,0.75);
    \draw[-,thick] (1.8,0.5)--(2.8,0.75);
    \draw[-,thick] (1.8,0.5)--(2.8,0.25);		
	\draw[-,thick] (1.8,-0.1)--(2.8,0.25);	

\end{tikzpicture}
\hspace{.4in}
 \begin{tikzpicture}[scale=0.65]
   
	\draw[-,thick] (0,1)--(1,1);		
	\draw[-,thick] (0,1)--(1,0);	
    \draw[-,thick] (0,0)--(1,0); 

    \draw[-,thick] (1.8,1.1)--(2.8,0.75);
    \draw[-,thick] (1.8,0.5)--(2.8,0.75);
    \draw[-,thick] (1.8,-0.1)--(2.8,0.75);		
	\draw[-,thick] (1.8,-0.1)--(2.8,0);	

\end{tikzpicture}
\hspace{.4in}
\raisebox{0.02in}{\begin{tikzpicture}[scale=0.65]
   
	\draw[-,thick] (0,1)--(1,1);		
	\draw[-,thick] (0,0)--(1,1);	
    \draw[-,thick] (0,0)--(1,0); 

    \draw[-,thick] (1.8,1)--(2.8,1);
    \draw[-,thick] (1.8,1)--(2.8,0);
    \draw[-,thick] (1.8,0)--(2.8,0);		

\end{tikzpicture}}
\hspace{.4in}
\raisebox{0.02in}{\begin{tikzpicture}[scale=0.65]
   
	\draw[-,thick] (0,1)--(1,1);		
	\draw[-,thick] (0,0)--(1,1);	
    \draw[-,thick] (0,0)--(1,0); 

    \draw[-,thick] (1.8,1)--(2.8,1);
    \draw[-,thick] (1.8,0)--(2.8,1);
    \draw[-,thick] (1.8,0)--(2.8,0);		

\end{tikzpicture}}
\end{figure*}
\end{example}

\subsection{Maximal cliques and groves}
\label{sec:CliquesToGroves}

Let $F$ be a framing on a graph $G=([n],E)$, and let~$C$ be a maximal clique made up of routes $R_1$ through $R_{|E|-n+2}$. We will construct the grove $\Gamma =(\gamma_2,\hdots,\gamma_{n-1})$ corresponding to $C$.

At an inner vertex $i$, identify the distinct prefixes that arrive at $i$ and occur in at least one route of $C$. Create one left vertex in $\gamma_i$ for each prefix, ordered from bottom to top by the total ordering induced by the framing. Create one right vertex in $\gamma_i$ for each outgoing edge from $i$, ordered from bottom to top by the framing. The edges of $\gamma_i$ connect left vertices corresponding to prefixes to right vertices corresponding to outgoing edges whenever there is a route that contains that prefix and outgoing edge.

This construction where left vertices correspond to prefixes and right vertices correspond to outgoing edges was presented in \cite{mms19}; they develop this construction further by recursively computing a sequence of auxiliary graphs through which they arrive at the corresponding routes in $C$. We do not need that entire construction; Theorem~7.8 of \cite{mms19} establishes the following. 

\begin{theorem}[\protect{\cite[Theorem~7.8]{mms19}}]
\label{thm:bijectionCG}
    This correspondence is a bijection between $\calC_G^F$ and~$\calG_G^F$.
\end{theorem}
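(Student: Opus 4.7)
The approach is to prove bijectivity by constructing an explicit inverse $\Psi : \calG_G^F \to \calC_G^F$ and then verifying that $\Phi \circ \Psi$ and $\Psi \circ \Phi$ are both the identity. Most of the actual labor sits in checking that $\Phi$ is well-defined (the output is a valid grove) and in giving a consistent recipe for $\Psi$.

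To see that $\Phi(C) = (\gamma_2,\ldots,\gamma_{n-1})$ really is a grove, I would verify three properties of each bipartite graph $\gamma_i$ built from a maximal clique $C$: it is connected, it has exactly $p_i + o_i - 1$ edges (hence is a tree), and it is noncrossing. Connectedness and the edge count come from a double-counting argument: each edge of $\gamma_i$ records a (prefix, outgoing edge) pair realized by some route of $C$ through $i$, and maximality of $C$ together with the coherence constraint forces the number of realized pairs to equal $p_i + o_i - 1$. The noncrossing property is where coherence enters directly: if edges $(l_a, r_c)$ and $(l_b, r_d)$ with $a<b$ and $c>d$ both appeared in $\gamma_i$, the corresponding routes would have prefixes in one order at $i$ but outgoing edges in the opposite order, contradicting coherence at vertex $i$. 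The inductive identity $p_i = \sum_{e \in \textup{In}(i)} \textup{deg}(e)$ follows from tracking how the degree of a right vertex $r_q$ of an earlier tree $\gamma_h$ equals the number of left vertices of $\gamma_{i}$ contributed by the edge at position $q$ in $\prec_{\textup{Out}(h)}$.

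For the inverse $\Psi$, I would process the trees $\gamma_2, \gamma_3, \ldots, \gamma_{n-1}$ from left to right, maintaining a multiset of partial routes and a labeling of left vertices of the current tree by those partial routes. Reading $\gamma_i$, the left vertices are canonically identified with the partial routes ending at $i$: these identifications are inherited from the previous step, because the first $\textup{deg}(r_q)$ left vertices of $\gamma_i$ (for each right vertex $r_q$ of the tree $\gamma_h$ whose outgoing edge feeds into $i$) correspond to the $\textup{deg}(r_q)$ copies of the extending prefix coming from the $q$-th outgoing edge at $h$. For every edge $(l_k, r_q)$ in $\gamma_i$, extend the partial route at $l_k$ along the $q$-th outgoing edge from $i$; once all inner vertices have been processed, the resulting collection of completed routes is $\Psi(\Gamma)$. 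Pairwise coherence of these routes is immediate at every vertex $i$ from the noncrossing property of $\gamma_i$, and the count of completed routes telescopes to $|E|-n+2$, confirming that $\Psi(\Gamma)$ is a maximal clique.

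Both compositions $\Phi \circ \Psi$ and $\Psi \circ \Phi$ reduce to identities at the level of each vertex because the prefix labels recovered from $\gamma_i$ in $\Psi$ agree with those recorded by $\Phi$, both being determined by the same inductive recipe. The \textbf{main obstacle} is keeping the bookkeeping honest: precisely, arguing that the identification of the left vertices of $\gamma_i$ with concrete prefixes is well-defined and consistent with the noncrossing structure of all the previous trees, and that this identification agrees on both sides of the bijection. Once this vertex-by-vertex correspondence is pinned down, both directions become essentially combinatorial bookkeeping, with the equivalence \emph{noncrossing at $i$} $\iff$ \emph{coherence at $i$} serving as the single structural bridge between the two worlds.
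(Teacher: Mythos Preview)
The paper does not prove this theorem; it is quoted as a result of M\'esz\'aros, Morales, and Striker \cite{mms19}, so there is no in-paper proof to compare against. The only hint the paper gives about the original argument is the remark just before the theorem: in \cite{mms19} the inverse is obtained ``by recursively computing a sequence of auxiliary graphs through which they arrive at the corresponding routes in $C$.'' Your left-to-right procedure---maintaining a labeled multiset of partial routes and extending them through each tree $\gamma_i$---is essentially a concrete realization of that same recursive idea, so your approach is consonant with what the paper reports about the original proof.

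Your outline is structurally sound, but be aware that the passage you flag as the \emph{main obstacle} is exactly where the real content lies and is not just bookkeeping. In particular, showing that a maximal clique $C$ yields exactly $p_i$ distinct prefixes at vertex $i$ (so that the left vertices of $\gamma_i$ are in bijection with them, and the count $p_i = \sum_{e\in\textup{In}(i)}\textup{deg}(e)$ holds) requires the maximality of $C$, not just coherence: coherence alone only bounds the number of realized (prefix, outgoing edge) pairs above by $p_i + o_i - 1$, and you need maximality to attain equality and to guarantee that every outgoing edge is actually used. Similarly, in the inverse direction you should verify that $\Psi(\Gamma)$ is \emph{maximal} among coherent sets, not merely coherent; this follows from the route count $|E|-n+2$ together with the fact that any clique has at most this many routes, but you should make that step explicit.
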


In $\cZig_n$, by construction every tree $\gamma_i$ has two right vertices corresponding to outgoing edges $x_i$ and $y_i$, in that order from bottom to top.

\begin{example}
    Consider the maximal clique $C$ associated to simplex~{\triangled{$\mathsf{A}$}} in Figure~\ref{fig:zig4example}. We will show that $C$ corresponds to the grove $\Gamma=(\gamma_2,\gamma_3)$ shown directly below it. 

    To construct $\gamma_2$, notice that at vertex $2$ there are two distinct prefixes, $y_{0}\prec x_{1}$, and two outgoing edges, $x_2\prec y_2$. The prefix $y_0$ only occurs in route \hspace{.02in}\raisebox{0.01in}{\squared{2}}\hspace{.02in} and continues along the edge~$x_2$. This gives the bottom edge of $\gamma_2$. The prefix $x_1$ occurs in routes \;\!\raisebox{0.01in}{\squared{1}}\;\!, \;\!\raisebox{0.01in}{\squared{4}}\;\!, and \;\!\raisebox{0.01in}{\squared{7}}~\!. The former two routes continue along edge $x_2$ while the last route continues along $y_2$. Therefore in $\gamma_2$, the upper left vertex corresponding to $x_1$ connects to both right vertices.

    Similarly, $\gamma_3$ is constructed by identifying the three prefixes that occur as routes passing through vertex $3$, $y_1\prec y_0x_2\prec x_1x_2$, and the two outgoing edges $x_3\prec y_3$. All three prefixes continue along edge $x_3$ (routes \;\!\raisebox{0.01in}{\squared{1}}\;\!, \;\!\raisebox{0.01in}{\squared{2}}\;\!, and \;\!\raisebox{0.01in}{\squared{3}}\;\!), while only prefix $x_1x_2$ continues along edge~$y_3$ in route \;\!\raisebox{0.01in}{\squared{4}}\;\!. 
\end{example}

\subsection{Groves and integer flows}
\label{sec:GrovesToFlows}

The correspondence between a grove $\Gamma\in \calG_G^F$ and an integer flow $\phi\in\calF_G^\bbZ(\bfd)$ is straightforward.

Let $\Gamma=(\gamma_2,\hdots,\gamma_{n-1})\in \calG_G^F$ be a grove. We construct an integer flow $\phi\in\calF_G^\bbZ(\bfd)$ by determining the flow $\phi(e)$ on every edge of $G$. First, define $\phi(e)=0$ for every outgoing edge from vertex $1$. Every other edge $e$ is an outgoing edge from an inner vertex $i$. Find the right vertex $r$ in $\gamma_i$ that corresponds to edge $e$. Define $\phi(e)$ to be the number of neighbors of $r$ minus 1. See Figure~\ref{fig:groveflow}. The following is Lemma~7.9 of \cite{mms19}.

\begin{figure}
    \centering
    %
%
\raisebox{0.in}{\begin{tikzpicture}[scale=1]
	\filldraw[black] (4,0) circle (3pt) node[](a3) {};
    \node[] at (4,-0.65) {\scriptsize $i$};
    \node[] at (2,-1) {\scriptsize $e_1$};
    \node[] at (2,0) {\scriptsize $e_2$};
    \node[] at (2,1) {\scriptsize $e_3$};
    \node[] at (6,-1) {\scriptsize $e_1'$};
    \node[] at (6,-0.333) {\scriptsize $e_2'$};
    \node[] at (6,0.333) {\scriptsize $e_3'$};
    \node[] at (6,1) {\scriptsize $e_4'$};
	\draw[->] (2.3,0)--(a3);		
	\draw[->] (2.3,1) to[out=0,in=120] (a3);	
	\draw[->] (2.3,-1) to[out=0,in=240] (a3);	
	\draw[->]    (a3) to[out=60,in=180] (5.7,1);
	\draw[->]    (a3) to[out=20,in=180] (5.7,0.333);
	\draw[->]    (a3) to[out=-20,in=180] (5.7,-0.333);
	\draw[->]    (a3) to[out=-60,in=180] (5.7,-1);
    \node[] at (3,1.15) {\color{red} \scriptsize $2$};
    \node[] at (3,0.25) {\color{red} \scriptsize $3$};
    \node[] at (3,-0.65) {\color{red} \scriptsize $1$};
    \node[] at (5,1.15) {\color{red} \scriptsize $1$};
    \node[] at (5,0.5) {\color{red} \scriptsize $4$};
    \node[] at (5,-0.07) {\color{red} \scriptsize $0$};
    \node[] at (5,-0.7) {\color{red} \scriptsize $3$};

    \node[] at (4,-2.3) {\scriptsize Flow on edges of $G$ };
    \node[] at (4,-2.65) {\scriptsize incident with vertex $i$};
\end{tikzpicture}
\qquad\qquad
\begin{tikzpicture}[scale=2]
    \draw[-,thick] (0,2)--(1,1.75);
    \draw[-,thick] (0,1.75)--(1,1.75);
    \draw[-,thick] (0,1.75)--(1,1.25);
    \draw[-,thick] (0,1.5)--(1,1.25);
    \draw[-,thick] (0,1.25)--(1,1.25);	    
    \draw[-,thick] (0,1)--(1,1.25);		
    \draw[-,thick] (0,0.75)--(1,1.25);		
    \draw[-,thick] (0,0.75)--(1,0.75); 
    \draw[-,thick] (0,0.75)--(1,0.25); 
    \draw[-,thick] (0,0.5)--(1,0.25); 
    \draw[-,thick] (0,0.25)--(1,0.25); 
	\draw[-,thick] (0,0.0)--(1,0.25);	
    \draw[decorate,decoration={brace,amplitude=6pt},xshift=-.05in]
(0,1.45) -- (0,2.05) node [black,midway,xshift=-0.6in] {\begin{tabular}{c}\footnotesize${\color{red}\phi(}e_3{\color{red})}+1$\vspace{-.03in}\\\footnotesize left vertices\end{tabular}};
    \draw[decorate,decoration={brace,amplitude=6pt},xshift=-.05in]
(0,.45) -- (0,1.3) node [black,midway,xshift=-0.6in] {\footnotesize \begin{tabular}{c}${\color{red}\phi(}e_2{\color{red})}+1$\\left vertices\end{tabular}};
    \draw[decorate,decoration={brace,amplitude=6pt},xshift=-.05in]
(0,-.05) -- (0,0.30) node [black,midway,xshift=-0.6in] {\footnotesize \begin{tabular}{c}${\color{red}\phi(}e_1{\color{red})}+1$\\left vertices\end{tabular}};
    \draw[decorate,decoration={brace,amplitude=6pt},xshift=.05in]
(1,1.95) -- (1,1.55) node [black,midway,xshift=0.5in] {\scriptsize \begin{tabular}{c}degree of $r_4$\\is ${\color{red}\phi(}e_4'{\color{red})}+1$\end{tabular} };
    \draw[decorate,decoration={brace,amplitude=6pt},xshift=.05in]
(1,1.45) -- (1,1.05) node [black,midway,xshift=0.5in] {\scriptsize \begin{tabular}{c}degree of $r_3$\\is ${\color{red}\phi(}e_3'{\color{red})}+1$\end{tabular} };
    \draw[decorate,decoration={brace,amplitude=6pt},xshift=.05in]
(1,0.95) -- (1,0.55) node [black,midway,xshift=0.5in] {\scriptsize \begin{tabular}{c}degree of $r_2$\\is ${\color{red}\phi(}e_2'{\color{red})}+1$\end{tabular} };
    \draw[decorate,decoration={brace,amplitude=6pt},xshift=.05in]
(1,0.45) -- (1,0.05) node [black,midway,xshift=0.5in] {\scriptsize \begin{tabular}{c} degree of $r_1$\\ is ${\color{red}\phi(}e_1'{\color{red})}+1$\end{tabular} };
    \node[] at (0.5,-0.35) {\scriptsize Tree $\gamma_i$ in grove $\Gamma$};
    \end{tikzpicture}
}
\caption{For a graph $G$ with a framing $F$ in which $e_1\prec_{\textup{In}(i)}e_2\prec_{\textup{In}(i)}e_3$ and $e_1'\prec_{\textup{Out}(i)}e_2'\prec_{\textup{Out}(i)}e_3'\prec_{\textup{Out}(i)}e_4'$, the bijection between an integer flow $\phi$ and the grove $\Gamma$ is understood by the relationship between the flow in edges of $G$ incident with vertex~$i$ and the degrees of the vertices in tree $\gamma_i$. }
    \label{fig:groveflow}
\end{figure}
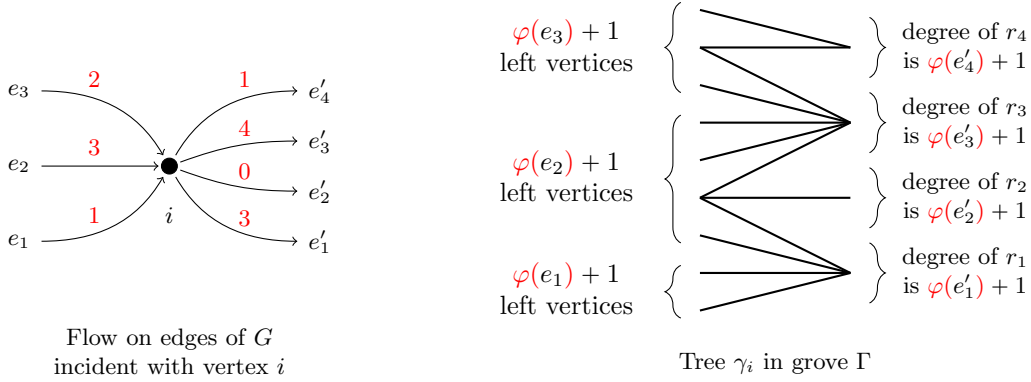   

\begin{theorem}[\protect{\cite[Lemma~7.9]{mms19}}]
\label{thm:bijectionGF}
   This correspondence is a bijection between $\calG_G^F$ and $\calF_G^\bbZ(\bfd)$.
\end{theorem}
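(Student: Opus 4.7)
My plan is to prove the bijection by verifying well-definedness of the forward map and then exhibiting an explicit inverse, constructed recursively on the inner-vertex index~$i$.

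First I would verify that for each $\Gamma$, the assigned $\phi$ satisfies conservation of flow with net flow $\bfd$ at every vertex. The key observation is a simple degree count in the tree~$\gamma_i$: with $p_i$ left vertices and $o_i$ right vertices, there are $p_i+o_i-1$ edges, so the right-vertex degrees sum to $p_i+o_i-1$, giving $\sum_{e\in\textup{Out}(i)}\phi(e)=p_i-1$. By adopting the convention that edges out of the source contribute degree~$1$ (matching $\phi(e)=0$), the definition of $p_i$ as the sum of $\textup{deg}(e)$ over $e\in \textup{In}(i)$ translates directly to $\sum_{e\in \textup{In}(i)}\phi(e)=p_i-|\textup{In}(i)|$. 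Subtracting yields conservation at $i$ with $a_i = |\textup{In}(i)|-1 = d_i$, and conservation at the source and sink then follows automatically since $\bfd$ sums to zero.

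For the inverse, I would build $\Gamma$ from a given $\phi$ by induction on~$i$. Assume inductively that $\gamma_2,\hdots,\gamma_{i-1}$ have been constructed consistently with $\phi$. Then $p_i$ is already determined by the previous trees, and the required degree of each right vertex $r_j$ of $\gamma_i$ must be $\phi(e_j)+1$, where $e_j$ is the $j$-th outgoing edge from~$i$ in the framing order. Conservation of flow at $i$, just established in the forward direction, forces these prescribed degrees to sum to $p_i+o_i-1$, matching the edge count of any tree on $p_i+o_i$ vertices. The crux is then a uniqueness lemma: for positive integers $(d_1,\hdots,d_{o_i})$ summing to $p_i+o_i-1$, there is a unique noncrossing bipartite tree on ordered vertices $l_1,\hdots,l_{p_i}$ and $r_1,\hdots,r_{o_i}$ whose right-degree sequence is $(d_1,\hdots,d_{o_i})$. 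The noncrossing condition forces each $r_j$'s neighborhood to be a consecutive run of left vertices, and tree-ness forces consecutive $r_j,r_{j+1}$ to share exactly one left endpoint, producing the unique assignment $r_j\sim\{l_{a_j},\hdots,l_{a_j+d_j-1}\}$ with $a_1=1$ and $a_{j+1}=a_j+d_j-1$.

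The main obstacle I anticipate is a clean statement and proof of this uniqueness lemma for noncrossing bipartite trees with prescribed right-degrees, especially arguing that the noncrossing and tree conditions together force both the consecutive-run property for neighborhoods and the single-shared-endpoint property between consecutive right vertices. Once that is in hand, the two maps are mutual inverses essentially by construction: both are characterized by the identity $\deg(r_j)=\phi(e_j)+1$ at every right vertex of every $\gamma_i$, so applying one map and then the other returns the original object on the nose.
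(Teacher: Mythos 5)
The paper does not supply a proof of this statement; it is attributed directly to \cite[Lemma~7.9]{mms19}, so there is no in-paper argument to compare your proof against. That said, your proposed proof is correct and is the natural direct argument. The forward verification via degree counting is right: in each tree $\gamma_i$ the right-vertex degrees sum to $p_i+o_i-1$, so $\sum_{e\in\textup{Out}(i)}\phi(e)=p_i-1$; combined with $p_i=\sum_{e\in\textup{In}(i)}\deg(e)=\sum_{e\in\textup{In}(i)}(\phi(e)+1)$ this gives net flow $\lvert\textup{In}(i)\rvert-1=d_i$ at every inner vertex, and the source and sink then balance automatically because $\bfd$ sums to zero. The uniqueness lemma you flag as the crux does hold, and the argument you sketch goes through: the noncrossing condition forces each $N(r_j)$ to be a consecutive interval of left vertices with $\max N(r_j)\le\min N(r_{j+1})$; connectivity then forces $\max N(r_j)=\min N(r_{j+1})$ (otherwise $\{l_1,\dots,l_{\max N(r_j)}\}\cup\{r_1,\dots,r_j\}$ is a proper component) and acyclicity forbids any larger overlap; and $\min N(r_1)=1$, $\max N(r_{o_i})=p_i$ because an extreme left vertex would otherwise be isolated. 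This pins down $\gamma_i$ uniquely from its ordered right-degree sequence, and the two maps are then inverse by construction since both are characterized by the identity $\deg(r_j)=\phi(e_j)+1$. One small point worth stating explicitly when you write this up: each prescribed degree $\phi(e_j)+1$ is at least $1$ because the flows in $\calF_G^\bbZ(\bfd)$ are nonnegative, which is needed for the uniqueness lemma's hypotheses.
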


\begin{example}
Returning to Figure~\ref{fig:zig4example}, we explore the correspondence between the grove and the integer flow for Simplex~{\triangled{$\mathsf{A}$}}.  We proceed by assigning flow to edges in one vertex at a time. First, assign flow $0$ to edges $x_0$, $y_0$, and $y_1$. For the edges leaving vertex 2, consult~$\gamma_2$. We see that $x_2$ has two neighbors and $y_2$ has one neighbor; therefore, $\phi(x_2)=1$ and $\phi(y_2)=0$. For the edges leaving vertex 3, consult $\gamma_3$. Notice that $x_3$ has three neighbors and $y_3$ has one neighbor; therefore $\phi(x_3)=2$ and $\phi(y_3)=0$. 
\end{example}

\subsection{Tracking edges in groves}
\label{sec:tracking}

An important aspect of a grove is that it is not simply a sequence of trees; the edges in the trees are intricately linked. This can be seen through the edge labelings presented in \cite{mms19}; one can also see them spacially as we describe here.

Through the bijection in Theorem~\ref{thm:bijectionCG}, an edge $\eta:l\row r$ in tree $\gamma_i$ corresponds to a pair: the left vertex $l$ corresponds to a prefix $P$ at $i$ in $G$ and the right vertex $r$ corresponds to an outgoing edge $e:i\row j$ in $G$. The existence of $\eta$ implies there is a route $R$ with prefix $P$ that continues along edge $e$ to vertex $j$. This, in turn, associates the edge $\eta$ to a vertex $l'$ in $\gamma_j$, corresponding to the prefix $Pe$. (Except when $j=n$, in which case $R=Pe$.) 

This leads to a spacial way to associate the connections between edges and vertices across trees in a grove. All edges incident with the right vertex of $\gamma_i$ (corresponding to an edge $e:i\row j$) move together and arrive in $\gamma_j$ as left vertices. (We like imagining them as moving in parallel along a ribbon.)  Each of these collections of left vertices remains in the same order as when they left $\gamma_i$; whereas, the multiple arriving collections of left vertices are assembled to match the incoming framing at vertex $j$. This is visualized in Figure~\ref{fig:grovedynamics}.

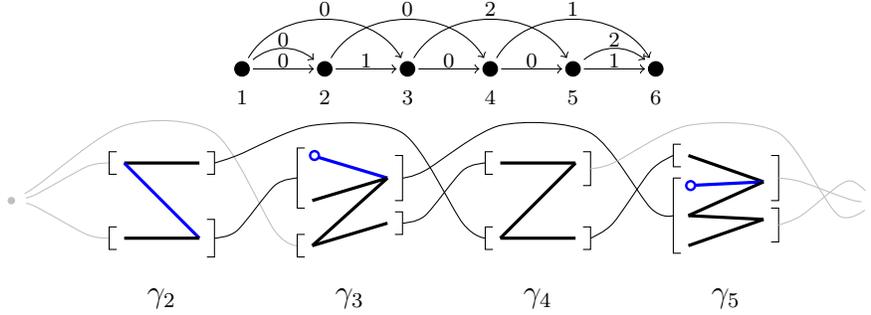
\begin{figure}
    \centering
    \begin{tikzpicture}[scale=0.55]
	\filldraw[black] (0,0) circle (5pt) node[label=below:{\tiny $1$}](a1) {};
	\filldraw[black] (2,0) circle (5pt) node[label=below:{\tiny $2$}](a2) {};
	\filldraw[black] (4,0) circle (5pt) node[label=below:{\tiny $3$}](a3) {};
	\filldraw[black] (6,0) circle (5pt) node[label=below:{\tiny $4$}](a4) {};
    \filldraw[black] (8,0) circle (5pt) node[label=below:{\tiny $5$}](a5) {};
    \filldraw[black] (10,0) circle (5pt) node[label=below:{\tiny $6$}](a6) {};

    \draw[->] (a1)--(a2);		
	\draw[->] (a2)--(a3);		
	\draw[->] (a3)--(a4);		
	\draw[->] (a4)--(a5);		
	\draw[->] (a5)--(a6);		

	\draw[->]    (a1) to[out=40,in=140] (a2);
	\draw[->]    (a1) to[out=60,in=120] (a3);
	\draw[->]    (a2) to[out=60,in=120] (a4);	
	\draw[->]    (a3) to[out=60,in=120] (a5);	
	\draw[->]    (a4) to[out=60,in=120] (a6);	
	\draw[->]    (a5) to[out=40,in=140] (a6);	

    \node[] at (1,0.2) {\tiny $0$};
    \node[] at (3,0.2) {\tiny $1$};
    \node[] at (5,0.2) {\tiny $0$};
    \node[] at (7,0.2) {\tiny $0$};
    \node[] at (9,0.2) {\tiny $1$};
    \node[] at (1,0.71) {\tiny $0$};
    \node[] at (2,1.45) {\tiny $0$};
    \node[] at (4,1.45) {\tiny $0$};
    \node[] at (6,1.45) {\tiny $2$};
    \node[] at (8,1.45) {\tiny $1$};
    \node[] at (9,0.71) {\tiny $2$};

\end{tikzpicture}  

\begin{tikzpicture}[scale=1]

    \draw[-,thin,black!25] plot [smooth] coordinates { (-1.5,0.5) (-1.2,0.67) (0,1.5) (1.2,1.4) (2.0,0.1) (2.3,-0.1)};

    \draw[-,thin,black!25] plot [smooth] coordinates { (-1.5,0.5) (-1.2,0.57) (-0.5,0.95) (-0.2,1)};

    \draw[-,thin,black!25] plot [smooth] coordinates { (-1.5,0.5) (-1.2,0.43) (-0.5,0.05) (-0.2,0)};
    
\node at (-1.5,0.5) [circle,fill,white,inner sep=4pt]{};
\node at (-1.5,0.5) [circle,fill,black!25,inner sep=1pt]{};

    \draw[-,thin] (-0.1,0.85)--(-0.2,0.85)--(-0.2,1.15)--(-0.1,1.15);
    
    \draw[-,thin] (-0.1,-0.15)--(-0.2,-0.15)--(-0.2,0.15)--(-0.1,0.15);

	\draw[-,very thick] (0,1)--(1,1);		
	\draw[-,very thick,blue] (0,1)--(1,0);	
    \draw[-,very thick] (0,0)--(1,0); 
    \node[] at (0.5,-0.8) {$\gamma_2$};

    \draw[-,thin] (1.1,1.15)--(1.2,1.15)--(1.2,0.85)--(1.1,0.85);

    \draw[-,thin] (1.1,0.25)--(1.2,0.25)--(1.2,-0.25)--(1.1,-0.25);

    \draw[-,thin] (2.4,0.4)--(2.3,0.4)--(2.3,1.2)--(2.4,1.2);

    \draw[-,thin] plot [smooth] coordinates { (1.2,0) (1.5,0.1) (2.0,0.7) (2.3,0.8)};

    \draw[-,thin] (2.4,-0.25)--(2.3,-0.25)--(2.3,0.05)--(2.4,0.05);

    \draw[-,very thick,blue] (2.5,1.1)--(3.5,0.8);
    \draw[-,very thick] (2.5,0.5)--(3.5,0.8);
    \draw[-,very thick] (2.5,-0.1)--(3.5,0.8);		
	\draw[-,very thick] (2.5,-0.1)--(3.5,0.2);	
\node at (2.53,1.1) [circle,fill,blue,inner sep=1.5pt]{};
\node at (2.53,1.1) [circle,fill,white,inner sep=0.85pt]{};
    \node[] at (3,-0.8) {$\gamma_3$};

    \draw[-,thin] (3.6,1.1)--(3.7,1.1)--(3.7,0.5)--(3.6,0.5);

    \draw[-,thin] (3.6,0.35)--(3.7,0.35)--(3.7,0.05)--(3.6,0.05);

    \draw[-,thin] (4.9,0.85)--(4.8,0.85)--(4.8,1.15)--(4.9,1.15);

    \draw[-,thin] plot [smooth] coordinates { (3.7,0.2) (4.0,0.3) (4.5,0.9) (4.8,1)};

    \draw[-,thin] (4.9,-0.15)--(4.8,-0.15)--(4.8,0.15)--(4.9,0.15);

    \draw[-,thin] plot [smooth] coordinates { (1.2,1) (1.5,1.1) (2.5,1.5) (3.7,1.4) (4.5,0.2) (4.8,0)};

	\draw[-,very thick] (5,1)--(6,1);		
	\draw[-,very thick] (5,0)--(6,1);	
    \draw[-,very thick] (5,0)--(6,0); 
    \node[] at (5.5,-0.8) {$\gamma_4$};

    \draw[-,thin] (6.1,1.15)--(6.2,1.15)--(6.2,0.7)--(6.1,0.7);

    \draw[-,thin] (6.1,0.15)--(6.2,0.15)--(6.2,-0.15)--(6.1,-0.15);

    \draw[-,thin] (7.4,0.95)--(7.3,0.95)--(7.3,1.25)--(7.4,1.25);

    \draw[-,thin] plot [smooth] coordinates { (6.2,0) (6.5,0.2) (7.0,0.9) (7.3,1.1)};

    \draw[-,thin] (7.4,0.8)--(7.3,0.8)--(7.3,-0.2)--(7.4,-0.2);

    \draw[-,thin] plot [smooth] coordinates { (3.7,0.8) (4,0.9) (5,1.5) (6.2,1.4) (7,0.4) (7.3,0.3)};

	\draw[-,very thick] (7.5,1.1)--(8.5,0.75);		
	\draw[-,very thick,blue] (7.5,0.7)--(8.5,0.75);	
    \draw[-,very thick] (7.5,0.3)--(8.5,0.75); 
    \draw[-,very thick] (7.5,0.3)--(8.5,0.25); 
    \draw[-,very thick] (7.5,-0.1)--(8.5,0.25); 
\node at (7.53,0.7) [circle,fill,blue,inner sep=1.5pt]{};
\node at (7.53,0.7) [circle,fill,white,inner sep=0.85pt]{};
    \node[] at (8,-0.8) {$\gamma_5$};

    \draw[-,thin] (8.6,1.1)--(8.7,1.1)--(8.7,0.5)--(8.6,0.5);
    
    \draw[-,thin] (8.6,-0.05)--(8.7,-0.05)--(8.7,0.4)--(8.6,0.4);

    \draw[-,thin,black!25] plot [smooth] coordinates { (8.7,0.175) (9,0.3) (9.5,0.7) (9.7,0.75)  (10,0.5)};

    \draw[-,thin,black!25] plot [smooth] coordinates { (8.7,0.8) (9,0.75) (9.7,0.5) (10,0.5)};
    
    \draw[-,thin,black!25] plot [smooth] coordinates { (6.2,0.925) (6.5,1) (7.5,1.5) (8.7,1.4) (9.5,0.3) (10,0.5)};

\node at (10,0.5) [circle,fill,white,inner sep=4pt]{};
\node at (10,0.5) [circle,fill,black!25,inner sep=1pt]{};

\end{tikzpicture}
\caption{How edges in trees correspond to left vertices in future trees. The middle edge in $\gamma_2$ (in blue) is tracked through the grove. The corresponding flow in $\cZig_6$ is also shown.}
    \label{fig:grovedynamics}
\end{figure}   

\smallskip
This means we can {\em track} an edge $\eta:l\row r$ in tree $\gamma_i$ forward through higher indexed trees in $\Gamma$. Let $r$ be right vertex corresponding to an edge $e:i\row j$ in $G$. If $j<n$, there are two options for the left vertex $l'$ in tree $\gamma_j$ that corresponds to $Pe$. The first option is that $l'$ is adjacent to two or more vertices in $\gamma_j$. In this case, we will say that the tracking process is unsuccessful. The second option is that $l'$ is adjacent to exactly one vertex $r'$ in~$\gamma_j$. In this case, continue the tracking process by tracking this new edge $\eta':l'\row r'$ forward in~$\Gamma$. If $j=n$, the tracking process terminates and is successful. We will call an edge $\eta$ of $\gamma_i$ {\em trackable} if the tracking process is successful. This property will be important in Section~\ref{sec:proofs}. 

\begin{example}
Figure~\ref{fig:grovedynamics} shows that the middle edge in $\gamma_2$ is trackable. The circled left vertices in $\gamma_3$ and $\gamma_5$ correspond to the tracked edges from $\gamma_2$ and $\gamma_3$, respectively.    
\end{example}

\begin{lemma}
\label{lem:trackable}
An edge of $\gamma_i$ corresponding to prefix $P$ and outgoing edge $e$ is trackable if and only if there is a unique route $R\in C$ with prefix $Pe$.   
\end{lemma}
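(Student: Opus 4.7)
The plan is to exploit the explicit correspondence provided by Theorem~\ref{thm:bijectionCG}: for any left vertex $l'$ in a tree $\gamma_k$ corresponding to a prefix $P'$ at $k$, the degree of $l'$ in $\gamma_k$ equals the number of distinct outgoing edges $e'$ at $k$ such that some route of $C$ uses $P'e'$ as a prefix. This translates the ``degree $1$'' condition governing the tracking process directly into a statement about unique continuations of prefixes among routes of $C$.

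For the forward direction, I would unravel a successful tracking of $\eta$ starting in $\gamma_i$. The tracking produces a sequence of edges $\eta^{(k)}: l^{(k)} \to r^{(k)}$ in trees $\gamma_{j_k}$, with $j_0 = i$, $l^{(0)} = l$, $r^{(0)} = r$, where each $l^{(k)}$ ($k \geq 1$) corresponds to the prefix $P \cdot e \cdot e_1 \cdots e_{k-1}$ (with $e_m$ the outgoing edge at $j_m$ corresponding to $r^{(m)}$) and each $l^{(k)}$ has degree exactly~$1$. By the degree-correspondence above, at every $j_k$ the only outgoing edge at $j_k$ used by a route of $C$ extending the relevant prefix is the forced edge $e_k$. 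Reading off these forced continuations produces a route $R \in C$ with prefix $Pe$ (existence is automatic since $\eta \in \gamma_i$ certifies that some route of $C$ extends $Pe$), and any other route of $C$ with prefix $Pe$ must match these same forced continuations by induction on $k$, hence coincides with $R$.

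For the backward direction, I would take the unique route $R = Pe \cdot e_1 \cdots e_{m-1}$ of $C$ with prefix $Pe$ and show by contradiction that each intermediate left vertex $l^{(k)}$ in $\gamma_{j_k}$ (the one corresponding to the prefix of $R$ up to $j_k$) has degree~$1$. If some $l^{(k)}$ had degree $\geq 2$, the correspondence would furnish an outgoing edge $e_k' \neq e_k$ at $j_k$ used by some route $R' \in C$ that shares the prefix of $R$ up to $j_k$. Such an $R'$ extends $Pe$ yet diverges from $R$ at $j_k$, contradicting uniqueness. Consequently the tracking succeeds.

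The main subtlety in both directions is to verify that the left vertex we need in each $\gamma_{j_k}$ actually exists: this holds because, in the forward direction, $\eta \in \gamma_i$ witnesses a route of $C$ with prefix $Pe$ so every prefix encountered during tracking belongs to some route of $C$; and in the backward direction, the route $R$ itself witnesses the relevant prefix at each $j_k$. This is exactly what Theorem~\ref{thm:bijectionCG} requires of a left vertex. Beyond this, both directions are a straightforward induction on the tracking index, with the substantive content packed into the degree-to-continuation translation.
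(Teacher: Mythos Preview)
Your proposal is correct and takes essentially the same approach as the paper's proof. The paper compresses the whole argument into one sentence---``the tracking process is unsuccessful if and only if in some tree $\gamma_k$, the arriving prefix is the prefix of two or more routes in $C$''---while you spell out the degree-to-continuation translation explicitly and run the induction in both directions, but the underlying idea is identical.
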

\begin{proof}
The tracking process is unsuccessful if and only if in some tree $\gamma_k$, the arriving prefix is the prefix of two or more routes in $C$.
\end{proof}


\section{The structure of the dual graph}
\label{sec:proofs}

\subsection{Operations of Groves}
\label{sec:grove_operations}

Section~\ref{sec:groves} describes the relationship between maximal cliques, groves, and integer flows. We use the characterization of adjacency between cliques in Theorem~\ref{thm:bump} to determine a characterization of adjacency between groves in $\calG_n^{\textup{LRL}}$. We start by defining an operation that will recover Definition~\ref{def:flowmove} when translated to the world of integer flows.

\begin{definition}
\label{def:move}
    Define the \textit{elementary move $m_i^+:\calG_n^{\textup{LRL}}\row\calG_n^{\textup{LRL}}$} for $2\leq i\leq n-1$ as follows. 

    \smallskip
    Start with a grove $\Gamma=(\gamma_2,\hdots,\gamma_{n-1})$. 
    \begin{enumerate}
        \item Modify $\gamma_{i}$. From bottom to top, label the left vertices $l_1$ through $l_{p_i}$ and label the right vertices $r_1$ and $r_2$. Find the vertex $l_j$ that is adjacent to both $r_1$ and $r_2$. If $j=1$, $m_i^+$ does not apply to $\Gamma$. Otherwise, replace edge $l_jr_1$ by $l_{j-1}r_2$ to create the tree $\gamma_{i}'$. (Informally, we call this an ``S to Z move'' because we replace a \begin{tikzpicture}[scale=0.25]
\draw (0,0) -- (1,0) -- (0,1) -- (1,1);
\end{tikzpicture} by a \begin{tikzpicture}[scale=0.25]
\draw (1,0) -- (0,0) -- (1,1) -- (0,1);
\end{tikzpicture}.)
        \item If $i<n-1$, modify $\gamma_{i+1}$. If the top vertex on the left side $l_{p_{i+1}}$ is adjacent to both vertices on the right side, $m_i^+$ does not apply to $\Gamma$. Otherwise, delete $l_{p_{i+1}}$ (and its incident edge) to create the tree $\gamma_{i+1}'$.
        \item If $i<n-2$, modify $\gamma_{i+2}$. Add a new vertex $l_0$ at the bottom of the left side and the edge $l_0r_1$ to create the tree $\gamma_{i+2}'$.
        \item If $m_i^+$ applies to $\Gamma$, define $m_i^+(\Gamma)$ to be the grove $\Gamma'$ where $\gamma_{i}$ is replaced by $\gamma_{i}'$, $\gamma_{i+1}$ is replaced by $\gamma_{i+1}'$ and $\gamma_{i+2}$ is replaced by $\gamma_{i+2}'$ (as appropriate).
    \end{enumerate}
    This procedure is visualized in Figure~\ref{fig:elementarymove}. 
    The {\em elementary move $m_i^-$} is the inverse operation that takes $\Gamma'$ to $\Gamma$. We say that $\Gamma$ and $\Gamma'$ are {\em adjacent} if they are related by an elementary move.
\end{definition}

\begin{theorem} 
\label{thm:GrovesAdjacent}
If two simplices of $\calT_n^{\textup{LRL}}$ are adjacent, then their corresponding groves differ by one of the elementary moves.
\end{theorem}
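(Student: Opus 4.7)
The plan is to invoke Theorem~\ref{thm:bump} to obtain a bump between $C$ and $C'$, canonicalize the common inner vertex so that the swapped suffixes diverge at $i$ itself, translate the clique swap through Theorem~\ref{thm:bijectionCG}, and then verify tree-by-tree that the resulting grove modification is exactly one of the elementary moves in Definition~\ref{def:move}.

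First I would set up: by Theorem~\ref{thm:bump}, the adjacent cliques $C, C'$ differ by a single pair of routes $R = PiQ$, $R' = P'iQ'$ with $P, P'$ consecutive prefixes and $Q, Q'$ consecutive suffixes at some common inner vertex $i$. The key observation is that the LRL framing on $\cZig_n$ gives lexicographic pre-order total orders on prefixes and suffixes, since $x_k \prec y_k$ outgoing and $y_{k-2} \prec x_{k-1}$ incoming at every inner vertex. Consequently, if $Q, Q'$ share a first edge $e: i \to j$ with $j \in \{i+1, i+2\}$, then $Q/e, Q'/e$ are consecutive suffixes at $j$ and $Pe, P'e$ are consecutive prefixes at $j$: any prefix at $j$ strictly between $Pe$ and $P'e$ must end in $e$ (prefixes ending in the other incoming edge lie entirely above or entirely below in the LRL order at $j$), and hence has the form $Ye$ with $Y$ strictly between $P$ and $P'$ at $i$, a contradiction. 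Iterating, I may assume $Q, Q'$ diverge at $i$ itself, so $Q$ is the largest $x_i$-suffix $x_i y_{i+1} y_{i+3}\cdots$ and $Q'$ is the smallest $y_i$-suffix $y_i x_{i+2} x_{i+3}\cdots x_{n-1}$.

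With this normalization I would translate the swap through Theorem~\ref{thm:bijectionCG}. Let $l_{j-1} \prec l_j$ be the consecutive left vertices of $\gamma_i$ corresponding to $P', P$, and $r_1, r_2$ the right vertices for $x_i, y_i$. The route $R$ contributes the edge $l_j r_1$ to $\gamma_i(C)$, and the bump together with maximality forces $R$ to be the unique route of $C$ with prefix $P$ continuing along $x_i$; likewise $R'$ is the unique route of $C'$ carrying $(P', y_i)$. Hence replacing $R$ with $R'$ performs precisely the S-to-Z modification $l_j r_1 \to l_{j-1} r_2$ in $\gamma_i$. Applying the tracking construction of Section~\ref{sec:tracking}: since $Q = x_i y_{i+1}\cdots$, the edge $l_j r_1$ tracks forward into $\gamma_{i+1}$ as the topmost left vertex, adjacent only to the right vertex for $y_{i+1}$, so removing $R$ deletes exactly this vertex and its incident edge, matching step (2) of Definition~\ref{def:move}. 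Symmetrically, since $Q' = y_i x_{i+2}\cdots$, the edge $l_{j-1} r_2$ appears in $\gamma_{i+2}$ as a new bottom-most left vertex adjacent only to the right vertex for $x_{i+2}$, matching step (3).

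Finally, I would show $\Gamma$ and $\Gamma'$ agree on all trees outside $\{\gamma_i, \gamma_{i+1}, \gamma_{i+2}\}$. For $\gamma_k$ with $k < i$, the relevant prefixes and edges are shared by multiple routes of both cliques since coherence propagates backward through the maximal clique. For $k > i+2$, the crucial point is that the further tracking of the modified edges terminates at or before $\gamma_{i+3}$: in both $C$ and $C'$ the tracked left vertex in $\gamma_{i+3}$ has degree at least two, so by Lemma~\ref{lem:trackable} the swap is absorbed by the pre-existing grove structure and induces no further tree change. I expect this \emph{absorption step} to be the main technical obstacle, requiring a careful interplay between the explicit tail structures of $Q$ and $Q'$ and the maximal-clique constraint. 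Once established, assembling the three local modifications above yields precisely the elementary move $m_i^+$ (or $m_i^-$ depending on the orientation of the swap), completing the proof.
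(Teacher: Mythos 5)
Your setup through the modification of $\gamma_i$, $\gamma_{i+1}$, $\gamma_{i+2}$ matches the paper's argument, and your normalization of $i$ (propagating consecutivity so that $Q$, $Q'$ diverge at $i$ itself) is fine. The problem is the step you flagged as the ``absorption step'' for trees $\gamma_j$ with $j \geq i+3$: the mechanism you propose is in fact the opposite of what happens, and the explicit tail structures you deduce for $Q$ and $Q'$ are incompatible with it.

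You claim that after normalization $Q = x_i y_{i+1} y_{i+3}\cdots$ and $Q' = y_i x_{i+2} x_{i+3}\cdots x_{n-1}$, i.e.\ the extremal suffixes in the full total order, and that in $\gamma_{i+3}$ the tracked left vertex has degree at least two so that, by Lemma~\ref{lem:trackable}, the modification is absorbed. Both halves fail. First, a left vertex of degree at least two in $\gamma_{i+3}$ is the unique pivot; if the prefix $Px_iy_{i+1}$ were the pivot, then $R$ would share that prefix with another route $\tilde R \in C$, and removing $R$ would change the degree of that pivot, thereby \emph{changing} $\gamma_{i+3}$ rather than leaving it alone. The paper's closing observation in this proof asserts exactly the opposite: the prefix of $R$ arriving at any $j\geq i+3$ cannot be adjacent to both right vertices, i.e.\ its left vertex is a pendant, precisely so that $C$ and $C'$ differ by only one route. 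Second, your explicit tails for $Q$ and $Q'$ would have $R$ take $y_{i+3}$ and $R'$ take $x_{i+3}$ out of vertex $i+3$, which would give the pendant at the shared position $m$ different partners in $\gamma_{i+3}$ and $\gamma_{i+3}'$ and hence different trees. The actual reason the trees from $\gamma_{i+3}$ onward coincide is a positional coincidence: the only prefix in $\gamma_{i+3}$ not coming from a shared route is $Px_iy_{i+1}$, the only one in $\gamma_{i+3}'$ not coming from a shared route is $P'y_ix_{i+2}$, they occupy the same ordinal position (highest $y_{i+1}$-prefix vs.\ lowest $x_{i+2}$-prefix, which are adjacent across the incoming-edge boundary), both are pendant, and the edge of that single pendant position is forced by the rest of the tree (the tree minus the pendant is the same in both cases and its pivot location determines which right vertex the pendant must attach to for the result to be a noncrossing tree). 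This forces $R$ and $R'$ to share a common suffix $S$ from $i+3$, which is the opposite of your extremal-suffix deduction. So the gap you anticipated is real, and the mechanism you sketched to fill it cannot work; you need the positional-coincidence argument instead.
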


\begin{proof}
Let $\Delta_C$ and $\Delta_{C'}$ be two adjacent simplices that correspond to maximal cliques $C$ and $C'$. By Theorem~\ref{thm:bump}, there are two routes $R\in C$ and $R'\in C'$ that satisfy $R=PiQ$ and $R'=P'iQ'$ for prefixes $P'\prec P$ and suffixes $Q\prec Q'$ adjacent in their total orderings at $i$. This vertex $i$ may not be unique because it may be a vertex along a maximal path including~$i$ that $R$ and $R'$ share. Choose $i$ to be the last vertex on this path. In this way, $Q$ and $Q'$ start with the edges $x_i$ and $y_i$.

The groves $\Gamma=(\gamma_2,\hdots,\gamma_{n-1})$ and $\Gamma'=(\gamma_2',\hdots,\gamma_{n-1}')$ are related in a predictable way, as described here and visualized in Figure~\ref{fig:elementarymove}.

\begin{figure}
    \centering
    \begin{tikzpicture}[scale=0.8]
    \node[] at (0,0.5) {\scriptsize Grove $\Gamma$};

    \draw[-,thick] (2,0)--(3,0)--(2,1)--(3,1);		

    \node[] at (2.5,-0.5) {\scriptsize $\gamma_{2}$};

    \node[] at (3.75,0.5) {\scriptsize $\cdots$};
   
	\draw[-,thick] (4.5,1)--(5.5,0.75);		
	\draw[-,thick] (4.5,1.25)--(5.5,0.75);	
    \draw[-,thick] (4.5,0.5)--(5.5,0.75); 
	\draw[-,dotted,red] (4.5,0)--(5.5,0.75);	
	\draw[-,very thick,blue] (4.5,0.5)--(5.5,0.25);	
	\draw[-,thick] (4.5,0)--(5.5,0.25);		
	\draw[-,thick] (4.5,-0.25)--(5.5,0.25);		

   \node[] at (5,-0.5) {\scriptsize $\gamma_{i}$};

  \draw[-,very thick, blue](6.5,1.5)--(7.5,0.75);
 \draw[-,thick](6.5,1.25)--(7.5,0.75);
 \draw[-,thick] (6.5,1)--(7.5,0.75);		
	\draw[-,thick] (6.5,0.75)--(7.5,0.75);	
    \draw[-,thick] (6.5,0.5)--(7.5,0.75); 
	\draw[-,thick] (6.5,0.5)--(7.5,0.25);	
	\draw[-,thick] (6.5,0.2)--(7.5,0.25);		
	\draw[-,thick] (6.5,-0.2)--(7.5,0.25);

    \node[] at (7,-0.5) {\scriptsize $\gamma_{i+1}$};

 \draw[-,thick](8.5,1.5)--(9.5,0.75);
 \draw[-,thick] (8.5,1)--(9.5,0.75);		
	\draw[-,thick] (8.5,1.25)--(9.5,0.75);	
    \draw[-,thick] (8.5,0.5)--(9.5,0.75); 
	\draw[-,thick] (8.5,0.5)--(9.5,0.25);	
	\draw[-,thick] (8.5,0.25)--(9.5,0.25);		
	\draw[-,thick] (8.5,0)--(9.5,0.25);	
    \draw[-,dotted,red](8.5,-0.25)--(9.5,0.25);

    \node[] at (9,-0.5) {\scriptsize $\gamma_{i+2}$};

 \draw[-,thick] (10.5, 1.5 )--(11.5,0.75);
 \draw[-,thick] (10.5, 1.25 )--(11.5,0.75);
 \draw[-,thick] (10.5, 1.0 )--(11.5,0.75);	
 \draw[blue,dash pattern= on 3.9pt off 4.1pt,very thick] (10.5, 0.75 )--(11.5,0.75);
    \draw[red,dash pattern= on 3.9pt off 4.1pt,dash phase=4pt,very thick] (10.5, 0.75 )--(11.5,0.75);
 \draw[-,thick] (10.5, 0.5 )--(11.5,0.75); 
 \draw[-,thick] (10.5, 0.5 )--(11.5,0.75); 
 \draw[-,thick] (10.5, 0.5)--(11.5,0.25);	
 \draw[-,thick] (10.5, 0.25)--(11.5,0.25);		
 \draw[-,thick] (10.5,0   )--(11.5,0.25);	
 \draw[-,thick] (10.5,-0.25   )--(11.5,0.25);	

    \node[] at (11,-0.5) {\scriptsize $\gamma_{i+3}$};

    \node[] at (12.25,0.5) {\scriptsize $\cdots$};

    \draw[-,thick](13,1.5)--(14,0.75);
 \draw[blue,dash pattern= on 3.9pt off 4.1pt,very thick] (13,1.25)--(14,0.75);	
    \draw[red,dash pattern= on 3.9pt off 4.1pt,dash phase=4pt,very thick] (13,1.25)--(14,0.75);	    \draw[-,thick] (13,1)--(14,0.75);		
    \draw[-,thick] (13,0.75)--(14,0.75); 
    \draw[-,thick] (13,0.5)--(14,0.75); 
    \draw[-,thick] (13,0.25)--(14,0.75); 
	\draw[-,thick] (13,0.0)--(14,0.75);	
	\draw[-,thick] (13,0.0)--(14,0.25);		
    \draw[-,thick] (13,-0.25)--(14,0.25);

    \node[] at (13.5,-0.5) {\scriptsize $\gamma_{n-1}$};

\end{tikzpicture}

\vspace{.175in}

\begin{tikzpicture}[scale=0.8]
    \node[] at (0,0.5) {\scriptsize Grove $\Gamma'$};
    
    \draw[-,thick] (2,0)--(3,0)--(2,1)--(3,1);		

    \node[] at (2.5,-0.5) {\scriptsize $\gamma_{2}'$};

    \node[] at (3.75,0.5) {\scriptsize $\cdots$};

	\draw[-,thick] (4.5,1)--(5.5,0.75);		
	\draw[-,thick] (4.5,1.25)--(5.5,0.75);	
    \draw[-,thick] (4.5,0.5)--(5.5,0.75); 
	\draw[-,dotted,blue] (4.5,0.5)--(5.5,0.25);	
	\draw[-,very thick,red] (4.5,0)--(5.5,0.75);	
	\draw[-,thick] (4.5,0)--(5.5,0.25);		
	\draw[-,thick] (4.5,-0.25)--(5.5,0.25);		

    \node[] at (5,-0.5) {\scriptsize $\gamma_{i}'$};

  \draw[-, dotted, blue](6.5,1.5)--(7.5,0.75);
 \draw[-,thick](6.5,1.25)--(7.5,0.75);
 \draw[-,thick] (6.5,1)--(7.5,0.75);		
	\draw[-,thick] (6.5,0.75)--(7.5,0.75);	
    \draw[-,thick] (6.5,0.5)--(7.5,0.75); 
	\draw[-,thick] (6.5,0.5)--(7.5,0.25);	
	\draw[-,thick] (6.5,0.2)--(7.5,0.25);		
	\draw[-,thick] (6.5,-0.2)--(7.5,0.25);		

    \node[] at (7,-0.5) {\scriptsize $\gamma_{i+1}'$};

 \draw[-,thick](8.5,1.5)--(9.5,0.75);
 \draw[-,thick] (8.5,1)--(9.5,0.75);		
	\draw[-,thick] (8.5,1.25)--(9.5,0.75);	
    \draw[-,thick] (8.5,0.5)--(9.5,0.75); 
	\draw[-,thick] (8.5,0.5)--(9.5,0.25);	
	\draw[-,thick] (8.5,0.25)--(9.5,0.25);		
	\draw[-,thick] (8.5,0)--(9.5,0.25);	
 \draw[-,very thick,red](8.5,-0.25)--(9.5,0.25);

    \node[] at (9,-0.5) {\scriptsize $\gamma_{i+2}'$};

 \draw[-,thick] (10.5, 1.5 )--(11.5,0.75);
 \draw[-,thick] (10.5, 1.25 )--(11.5,0.75);
 \draw[-,thick] (10.5, 1.0 )--(11.5,0.75);	
 \draw[blue,dash pattern= on 3.9pt off 4.1pt,very thick] (10.5, 0.75 )--(11.5,0.75);
    \draw[red,dash pattern= on 3.9pt off 4.1pt,dash phase=4pt,very thick] (10.5, 0.75 )--(11.5,0.75);
 \draw[-,thick] (10.5, 0.5 )--(11.5,0.75); 
 \draw[-,thick] (10.5, 0.5 )--(11.5,0.75); 
 \draw[-,thick] (10.5, 0.5)--(11.5,0.25);	
 \draw[-,thick] (10.5, 0.25)--(11.5,0.25);		
 \draw[-,thick] (10.5,0   )--(11.5,0.25);	
 \draw[-,thick] (10.5,-0.25   )--(11.5,0.25);	

    \node[] at (11,-0.5) {\scriptsize $\gamma_{i+3}'$};

    \node[] at (12.25,0.5) {\scriptsize $\cdots$};

    \draw[-,thick](13,1.5)--(14,0.75);
 \draw[blue,dash pattern= on 3.9pt off 4.1pt,very thick] (13,1.25)--(14,0.75);	
    \draw[red,dash pattern= on 3.9pt off 4.1pt,dash phase=4pt,very thick] (13,1.25)--(14,0.75);	    \draw[-,thick] (13,1)--(14,0.75);		
    \draw[-,thick] (13,0.75)--(14,0.75); 
    \draw[-,thick] (13,0.5)--(14,0.75); 
    \draw[-,thick] (13,0.25)--(14,0.75); 
	\draw[-,thick] (13,0.0)--(14,0.75);	
	\draw[-,thick] (13,0.0)--(14,0.25);		
    \draw[-,thick] (13,-0.25)--(14,0.25);

    \node[] at (13.5,-0.5) {\scriptsize $\gamma_{n-1}'$};

\end{tikzpicture}	
\caption{A visualization of the difference between $\Gamma$ and $\Gamma'=m_i^+(\Gamma)$.}
    \label{fig:elementarymove}
\end{figure}
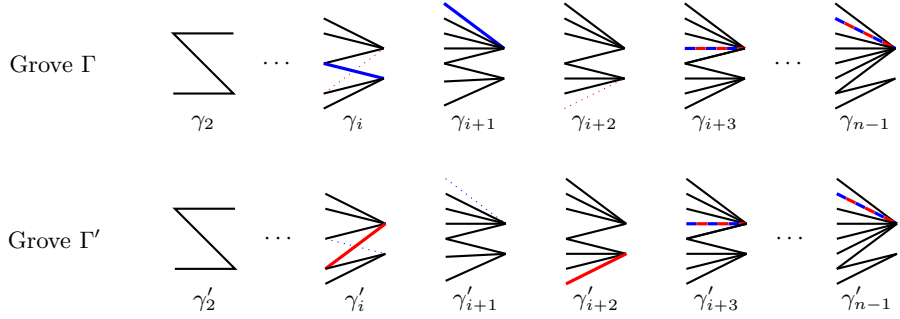

For all $j<i$, $\gamma_j=\gamma_j'$ because each tree only reflects the behavior of routes up to and including that vertex. The first trees where the groves differ is at the $i$-th tree. The subtrees of $\gamma_i$ and $\gamma_i'$ corresponding to $P$, $P'$, $x_i$, and $y_i$ are as shown in Figure~\ref{fig:szmove}; the remainder of each tree is unchanged. 

\begin{figure}[ht]
  \begin{tikzpicture}[scale=1]
    \draw[gray,  thin] (0.6,-0.15) -- (1,0);
    \draw[gray,  thin] (0.65,-0.225) -- (1,0);
    \draw[gray,  thin] (0.7,-0.3) -- (1,0);
    \draw[gray,  thin] (0.6,1.15) -- (1,1);
    \draw[gray,  thin] (0.65,1.225) -- (1,1);
    \draw[gray,  thin] (0.7,1.3) -- (1,1);
    \draw[very thick] (0,0) -- (1,0);
    \draw[blue,very thick] (1,0) -- (0,1);
    \draw[very thick] (0,1) -- (1,1);
    \node[] at (-.3,1) {$P_{\phantom{,}\!}$};
    \node[] at (-.3,0) {$P'_{\phantom{,}\!}$};
    \node[] at (1.3,1) {$y_i$};
    \node[] at (1.3,0) {$x_i$};
    \node[] at (-.3,-0.4) {\scriptsize $\vdots$};
    \node[] at (-.3,1.7) {\scriptsize $\vdots$};
    \node[] at (0.5,-1.25) {\scriptsize Tree $\gamma_{i}$ of $\Gamma$};
  \end{tikzpicture} 
  \qquad\qquad%
  \begin{tikzpicture}[scale=1]
    \draw[gray,  thin] (0.6,-0.15) -- (1,0);
    \draw[gray,  thin] (0.65,-0.225) -- (1,0);
    \draw[gray,  thin] (0.7,-0.3) -- (1,0);
    \draw[gray,  thin] (0.6,1.15) -- (1,1);
    \draw[gray,  thin] (0.65,1.225) -- (1,1);
    \draw[gray,  thin] (0.7,1.3) -- (1,1);
    \draw[very thick] (0,0) -- (1,0);
    \draw[red,very thick] (0,0) -- (1,1);
    \draw[very thick] (0,1) -- (1,1);
    \node[] at (-.3,1) {$P_{\phantom{,}\!}$};
    \node[] at (-.3,0) {$P'_{\phantom{,}\!}$};
    \node[] at (1.3,1) {$y_i$};
    \node[] at (1.3,0) {$x_i$};
    \node[] at (-.3,-0.4) {\scriptsize $\vdots$};
    \node[] at (-.3,1.7) {\scriptsize $\vdots$};
    \node[] at (0.5,-1.25) {\scriptsize Tree $\gamma_{i}'$ of $\Gamma'$};
  \end{tikzpicture} 
\caption{Subtrees in two adjacent groves}
\label{fig:szmove}
\end{figure}
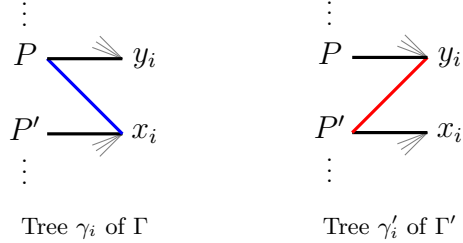

Up to two additional pairs of trees in the groves are different, depending on the value of $i$. 

If $i\leq n-2$, $\gamma_{i+1}$ and $\gamma_{i+1}'$ are different. In tree $\gamma_{i+1}$ of $\Gamma$, there is a vertex corresponding to the prefix $Px_i$ and this prefix is the highest in the total ordering of prefixes at vertex $i+1$. Furthermore, this prefix can't be adjacent to both $x_{i+1}$ and $y_{i+1}$ in $\gamma_{i+1}$ or else its removal would delete two routes from $C$. In tree $\gamma_{i+1}'$ of $\Gamma'$, this prefix does not exist. 

If $i\leq n-3$, $\gamma_{i+2}$ and $\gamma_{i+2}'$ are different. In tree $\gamma_{i+2}'$ of $\Gamma'$, there is a vertex corresponding to the prefix $P'y_i$ and this prefix is the lowest in the total ordering of prefixes at vertex $i+2$. Furthermore, this prefix can't be adjacent to both right vertices in $\gamma_{i+2}'$ or else its removal would delete two routes from $C'$. In tree $\gamma_{i+2}$ of $\Gamma$, this prefix does not exist. 

Because of the structure imposed by the graph $\cZig_n$ and the LRL framing, an noteworthy consequence is that $\gamma_j=\gamma_j'$ for all $j\geq i+3$.  This is because in trees $\gamma_{i+3}$ and $\gamma_{i+3}'$ (if they exist), the position of the corresponding prefixes $Px_iy_{i+1}$ in $\gamma_{i+3}$  and $P'y_ix_{i+2}$ in $\gamma_{i+3}'$ coincide in the total order of prefixes at vertex $i+3$. The prefix $Px_iy_{i+1}$ is the lowest prefix in the prefixes that end in $y_{i+1}$ and the prefix $P'y_ix_{i+2}$ is the highest prefix in the prefixes that end in $x_{i+2}$. No other prefixes differ in this tree.

The overlapping of these prefixes in tree $i+3$ means that all future trees are the same, even if the label that corresponds to the continuation of the modified prefix is different.  In other words, if $i\geq n+4$, the routes being exchanged between $C$ and $C'$ are necessarily $R=Px_iy_{i+1}S$ and $R'=P'y_ix_{i+2}S$ for the same suffix $S$ starting at vertex $i+3$.

An important observation about this configuration is that at no point can the prefix of the route $R$ arriving at a vertex $j\geq i+3$ and be adjacent to both right vertices because then $C$ would differ from $C'$ by more than one route.
\end{proof}

Theorem~\ref{thm:ElementaryMoves} now follows as a direct corollary.

\thmElementaryMoves*

\begin{proof}
Theorem~\ref{thm:bijectionGF} proves that the only edges whose flows change when $m_i^+$ or $m_i^-$ is applied are the edges $x_i$, $y_{i+1}$, $y_i$, and $x_{i+2}$. 

The conditions for $m_i^+$ and $m_i^-$ for integer flows are a direct translation of the conditions for $m_i^+$ and $m_i^-$ for groves.
\end{proof}

The parallels between the elementary moves in groves and in integer flows are visualized in Figure~\ref{fig:mplusvisualized}. Note that the relative position of the circled vertex in $\gamma_5$ is the same as the relative position of the circled vertex in $\gamma_5'$, as discussed in the proof of Theorem~\ref{thm:GrovesAdjacent}.

\begin{figure}
    \centering
    \begin{tikzpicture}[scale=0.55]
	\filldraw[black] (0,0) circle (5pt) node[label=below:{\tiny $1$}](a1) {};
	\filldraw[black] (2,0) circle (5pt) node[label=below:{\tiny $2$}](a2) {};
	\filldraw[black] (4,0) circle (5pt) node[label=below:{\tiny $3$}](a3) {};
	\filldraw[black] (6,0) circle (5pt) node[label=below:{\tiny $4$}](a4) {};
    \filldraw[black] (8,0) circle (5pt) node[label=below:{\tiny $5$}](a5) {};
    \filldraw[black] (10,0) circle (5pt) node[label=below:{\tiny $6$}](a6) {};

    \draw[->] (a1)--(a2);		
	\draw[->] (a2)--(a3);		
	\draw[->] (a3)--(a4);		
	\draw[->] (a4)--(a5);		
	\draw[->] (a5)--(a6);		

	\draw[->]    (a1) to[out=40,in=140] (a2);
	\draw[->]    (a1) to[out=60,in=120] (a3);
	\draw[->]    (a2) to[out=60,in=120] (a4);	
	\draw[->]    (a3) to[out=60,in=120] (a5);	
	\draw[->]    (a4) to[out=60,in=120] (a6);	
	\draw[->]    (a5) to[out=40,in=140] (a6);	

    \node[] at (1,0.2) {\tiny $0$};
    \node[] at (3,0.2) {\tiny\color{red} $0$};
    \node[] at (5,0.2) {\tiny $0$};
    \node[] at (7,0.2) {\tiny \color{red} $1$};
    \node[] at (9,0.2) {\tiny $1$};
    \node[] at (1,0.71) {\tiny $0$};
    \node[] at (2,1.45) {\tiny $0$};
    \node[] at (4,1.45) {\tiny \color{red} $1$};
    \node[] at (6,1.45) {\tiny \color{red} $1$};
    \node[] at (8,1.45) {\tiny $1$};
    \node[] at (9,0.71) {\tiny $2$};

\end{tikzpicture}  

\begin{tikzpicture}[scale=1]

    \draw[-,thin,black!25] plot [smooth] coordinates { (-1.5,0.5) (-1.2,0.67) (0,1.5) (1.2,1.4) (2.0,0.2) (2.3,0)};

    \draw[-,thin,black!25] plot [smooth] coordinates { (-1.5,0.5) (-1.2,0.57) (-0.5,0.95) (-0.2,1)};

    \draw[-,thin,black!25] plot [smooth] coordinates { (-1.5,0.5) (-1.2,0.43) (-0.5,0.05) (-0.2,0)};
    
\node at (-1.5,0.5) [circle,fill,white,inner sep=4pt]{};
\node at (-1.5,0.5) [circle,fill,black!25,inner sep=1pt]{};

    \draw[-,thin] (-0.1,0.85)--(-0.2,0.85)--(-0.2,1.15)--(-0.1,1.15);
    
    \draw[-,thin] (-0.1,-0.15)--(-0.2,-0.15)--(-0.2,0.15)--(-0.1,0.15);

	\draw[-,very thick] (0,1)--(1,1);		
	\draw[-,very thick,red] (0,0)--(1,1);	
    \draw[-,very thick] (0,0)--(1,0); 
    \node[] at (0.5,-0.8) {$\gamma_2'$};

    \draw[-,thin] (1.1,1.15)--(1.2,1.15)--(1.2,0.65)--(1.1,0.65);

    \draw[-,thin] (1.1,0.15)--(1.2,0.15)--(1.2,-0.15)--(1.1,-0.15);

    \draw[-,thin] (2.4,0.85)--(2.3,0.85)--(2.3,1.15)--(2.4,1.15);

    \draw[-,thin] plot [smooth] coordinates { (1.2,0) (1.5,0.1) (2.0,0.9) (2.3,1.0)};

    \draw[-,thin] (2.4,-0.15)--(2.3,-0.15)--(2.3,0.15)--(2.4,0.15);

	\draw[-,very thick] (2.5,1)--(3.5,1);		
	\draw[-,very thick] (2.5,0)--(3.5,1);	
    \draw[-,very thick] (2.5,0)--(3.5,0); 
    \node[] at (3,-0.8) {$\gamma_3'$};

    \draw[-,thin] (3.6,1.15)--(3.7,1.15)--(3.7,0.65)--(3.6,0.65);

    \draw[-,thin] (3.6,0.15)--(3.7,0.15)--(3.7,-0.15)--(3.6,-0.15);

    \draw[-,thin] (4.9,0.95)--(4.8,0.95)--(4.8,1.25)--(4.9,1.25);

    \draw[-,thin] plot [smooth] coordinates { (3.7,0) (4.0,0.1) (4.5,0.95) (4.8,1.1)};

    \draw[-,thin] (4.9,-0.2)--(4.8,-0.2)--(4.8,0.6)--(4.9,0.6);

    \draw[-,thin] plot [smooth] coordinates { (1.2,0.9) (1.5,1.0) (2.5,1.5) (3.7,1.4) (4.5,0.4) (4.8,0.2)};

	\draw[-,very thick] (5,1.1)--(6,0.8);		
	\draw[-,very thick] (5,0.5)--(6,0.8);	
    \draw[-,very thick] (5,0.5)--(6,0.2); 
   \draw[-,very thick,red] (5,-0.1)--(6,0.2);
\node at (5.03,-0.1) [circle,fill,red,inner sep=1.5pt]{};
\node at (5.03,-0.1) [circle,fill,white,inner sep=0.85pt]{};
    \node[] at (5.5,-0.8) {$\gamma_4'$};

    \draw[-,thin] (6.1,1.0)--(6.2,1.0)--(6.2,0.6)--(6.1,0.6);

    \draw[-,thin] (6.1,0.0)--(6.2,0.0)--(6.2,0.4)--(6.1,0.4);

    \draw[-,thin] (7.4,0.6)--(7.3,0.6)--(7.3,1.2)--(7.4,1.2);

    \draw[-,thin] plot [smooth] coordinates { (6.2,0.2) (6.5,0.3) (7.0,0.8) (7.3,0.9)};

    \draw[-,thin] (7.4,0.4)--(7.3,0.4)--(7.3,-0.2)--(7.4,-0.2);

    \draw[-,thin] plot [smooth] coordinates { (3.7,0.9) (4,1.0) (5,1.5) (6.2,1.4) (7,0.3) (7.3,0.1)};

	\draw[-,very thick] (7.5,1.1)--(8.5,0.75);		
	\draw[-,very thick,red] (7.5,0.7)--(8.5,0.75);	
    \draw[-,very thick] (7.5,0.3)--(8.5,0.75); 
    \draw[-,very thick] (7.5,0.3)--(8.5,0.25); 
    \draw[-,very thick] (7.5,-0.1)--(8.5,0.25); 
\node at (7.53,0.7) [circle,fill,red,inner sep=1.5pt]{};
\node at (7.53,0.7) [circle,fill,white,inner sep=0.85pt]{};
    \node[] at (8,-0.8) {$\gamma_5'$};

    \draw[-,thin] (8.6,1.1)--(8.7,1.1)--(8.7,0.5)--(8.6,0.5);
    
    \draw[-,thin] (8.6,-0.05)--(8.7,-0.05)--(8.7,0.4)--(8.6,0.4);

    \draw[-,thin,black!25] plot [smooth] coordinates { (8.7,0.175) (9,0.3) (9.5,0.7) (9.7,0.75)  (10,0.5)};

    \draw[-,thin,black!25] plot [smooth] coordinates { (8.7,0.8) (9,0.75) (9.7,0.5) (10,0.5)};
    
    \draw[-,thin,black!25] plot [smooth] coordinates { (6.2,0.8) (6.5,0.9) (7.5,1.5) (8.7,1.4) (9.5,0.3) (10,0.5)};

\node at (10,0.5) [circle,fill,white,inner sep=4pt]{};
\node at (10,0.5) [circle,fill,black!25,inner sep=1pt]{};

\end{tikzpicture}
\caption{The result of applying $m_2^+$ to the element in Figure~\ref{fig:grovedynamics}. The edge flows that have changed are highlighted in red.}
    \label{fig:mplusvisualized}
\end{figure}
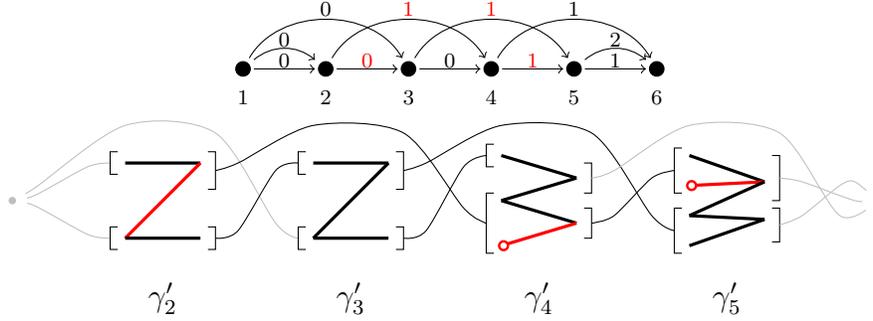   

\smallskip
Let $\Gamma_0$ be the grove in $\calG_n^F$ that corresponds to the integer flow $\phi_0\in\flows$ in which all the flow is in slack variables. In other words, 
\[\phi_0=(0,1,2,\hdots,n-2,0,\hdots,0).\]

\begin{lemma}
\label{lem:seqElemMoves}
Let $\Gamma$ be a grove in $\calG_n^F$. Then there is a sequence of elementary moves $(m^{\epsilon_1}_{j_1},\hdots,m^{\epsilon_l}_{j_l})$ with $\epsilon_k\in\{+,-\}$ for $1\leq k\leq l$ such that $\Gamma=m^{\epsilon_l}_{j_l}\circ\cdots\circ m^{\epsilon_1}_{j_1}(\Gamma_0)$. 
\end{lemma}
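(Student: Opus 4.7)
My plan is to translate the statement to integer flows via the bijection $\calG_n^F\cong\flows$ of Theorem~\ref{thm:bijectionGF}, under which the grove elementary moves of Definition~\ref{def:move} correspond exactly to the flow moves of Definition~\ref{def:flowmove} (as recorded in the proof of Theorem~\ref{thm:ElementaryMoves}). Let $\phi\in\flows$ be the integer flow corresponding to $\Gamma$. Since $m_i^-$ is the inverse of $m_i^+$, it suffices to reduce $\phi$ to $\phi_0$ by a sequence of $m_i^-$ moves; reversing then yields a sequence of $m_i^+$ moves from $\phi_0$ to $\phi$, which translates back to the desired sequence of grove moves from $\Gamma_0$ to $\Gamma$.

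I would induct on the weighted potential
\[
\mu(\phi):=\sum_{i=0}^{n-1}(n-i)\,\phi(y_i).
\]
Flow values are nonnegative; flow conservation at vertex $1$ automatically forces $\phi(y_0)=\phi(y_1)=0$; and iterating flow conservation from vertex $2$ onward shows that the only flow with every $\phi(y_i)=0$ is $\phi_0$ itself. Hence $\mu(\phi)\ge 0$, with equality if and only if $\phi=\phi_0$.

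The heart of the argument is the claim that, whenever $\phi\ne\phi_0$, taking $i^{*}$ to be the largest index with $\phi(y_{i^{*}})>0$ (necessarily $2\le i^{*}\le n-1$) produces an applicable $m_{i^{*}}^-$ that strictly decreases $\mu$. The condition $\phi(y_{i^{*}})>0$ is automatic, and the boundary conventions of Definition~\ref{def:flowmove} make this the only condition when $i^{*}\in\{n-2,n-1\}$. For $i^{*}\le n-3$ one additionally needs $\phi(x_{i^{*}+2})>0$; by maximality $\phi(y_{i^{*}+2})=0$, so flow conservation at vertex $i^{*}+2$ yields
\[
\phi(x_{i^{*}+2})=\phi(x_{i^{*}+1})+\phi(y_{i^{*}})-\phi(y_{i^{*}+2})+1\ \ge\ \phi(y_{i^{*}})+1\ \ge\ 2.
\]
Since $m_{i^{*}}^-$ decrements $\phi(y_{i^{*}})$ and (when $i^{*}<n-1$) increments $\phi(y_{i^{*}+1})$, a one-line computation gives $\Delta\mu=-1$ in every case.

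Iterating this claim strictly decreases $\mu$ at each step, so the process terminates at $\phi_0$ after exactly $\mu(\phi)$ moves, completing the induction. The only subtle point I anticipate is verifying the applicability of $m_{i^{*}}^-$ when $i^{*}\le n-3$, and the maximality choice of $i^{*}$ combined with flow conservation handles this cleanly.
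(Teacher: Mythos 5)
Your proof is correct but proceeds by a genuinely different route than the paper's. The paper's proof is short and non-constructive: it lets $\Delta_0$ and $\Delta$ be the simplices of the DKK triangulation corresponding to $\Gamma_0$ and $\Gamma$, appeals to the fact that the dual graph $D(\calT)$ of a triangulation of a convex polytope is connected, takes any path from $\Delta_0$ to $\Delta$, and notes that by Theorem~\ref{thm:GrovesAdjacent} each step of the path corresponds to an elementary move. Your argument instead works entirely on the flow side via Theorem~\ref{thm:bijectionGF} and is constructive: you exhibit a strictly decreasing nonnegative potential $\mu(\phi)=\sum_i (n-i)\phi(y_i)$, verify $\mu(\phi)=0$ forces $\phi=\phi_0$, and show that if $\phi\neq\phi_0$ then $m^-_{i^*}$ applies at the largest $i^*$ with $\phi(y_{i^*})>0$ (the only nontrivial applicability condition, $\phi(x_{i^*+2})>0$ when $i^*\le n-3$, follows from maximality of $i^*$ plus flow conservation at vertex $i^*+2$), and that it drops $\mu$ by exactly $1$ in every case including the boundary indices. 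Both arguments are sound; yours avoids invoking connectivity of the dual graph as a black box, furnishes an explicit algorithm reducing any flow to $\phi_0$, and in passing bounds the length of the required sequence by $\mu(\phi)$, while the paper's version is shorter and leans on a standard fact about triangulations. The only point worth stating explicitly in your write-up is that the flow produced by $m^-_{i^*}$ remains a nonnegative integer flow (flow conservation is preserved by a direct check at vertices $i^*$ through $i^*+3$, and nonnegativity holds because only strictly positive entries are decremented), so that it indeed corresponds under Theorem~\ref{thm:bijectionGF} to a grove in $\calG_n^F$ and the induction can proceed.
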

\begin{proof}
Let $\Delta_0$ and $\Delta$ be the simplices in the triangulation $\calT$ corresponding to $\Gamma_0$ and $\Gamma$, respectively. The dual graph to the DKK triangulation $D(\calT)$ is connected, so there is a path in $D(\calT)$ from $\Delta_0$ to $\Delta$. The sequence of simplices along that path are adjacent, so their corresponding groves are related by an elementary move, and the result follows.
\end{proof}

We are now in a position to prove Theorem~\ref{thm:embedding}.

\thmEmbedding*

\begin{proof}
We prove this theorem by specifying the coordinates of the embedding of each vertex of the dual graph $D=D(\calT_n^{\textup{LRL}})$ and then showing that the only edges connecting vertices in $D$ are edges of the square grid graph. 

Notate by $\bfe_k\in\bbZ^{n-2}$ the basis vector with a $1$ in position $k$ and $0$'s elsewhere. Consider a simplex $\Delta\in \calT_n^{\textup{LRL}}$ corresponding to a grove $\Gamma$ and an integer flow $\phi$.  By Lemma~\ref{lem:seqElemMoves}, there is a sequence of elementary moves $(m^{\epsilon_1}_{j_1},\hdots,m^{\epsilon_l}_{j_l})$ associated to $\Gamma$. 

We embed the corresponding vertex of $D$ at the lattice point
\[\sum_{i=1}^l \epsilon_k\bfe_{j_k}.\]

This coordinate designation is well defined. By Definition~\ref{def:flowmove}, applying the move $m_{k}^{\epsilon_k}$ to a flow results in a new flow that differs by $\epsilon_k\cdot(0,\hdots,-1,0,1,\hdots,0,0,\hdots,1,-1,\hdots,0)$. If two sequences of elementary moves $(m^{\epsilon_1}_{j_1},\hdots,m^{\epsilon_l}_{j_l})$ yield the same lattice point, 
then the resulting integer flows (and therefore simplices) also coincide.

By Theorem~\ref{thm:GrovesAdjacent}, the edges of $D$ only connect two integer lattice points that differ by $\bfe_k$ for some $1\leq k\leq n-2$, which is precisely the condition that they are edges of the square grid graph.
\end{proof}

\subsection{Adjacency of Groves}
\label{sec:grove_adjacency}

The embedding of $D(\calT_n^{\textup{LRL}})$ in Theorem~\ref{thm:embedding} shows that the vertices of $D(\calT_n^{\textup{LRL}})$ lie in a predictable lattice structure. However, this does not determine which neighboring lattice points of $\Square_{n-2}$ are adjacent in $D(\calT_n^{\textup{LRL}})$. We now provide a characterization of this adjacency.

\begin{definition}
\label{def:groveoffsets}
    Given a grove $\Gamma=(\gamma_2,\hdots,\gamma_{n-1})$ and an index $2\leq i\leq n-1$, define the {\em sequence of offsets} of $\Gamma$ at $i$, $\bfz^{(i)}(\Gamma)=\big(z_{i+1},\hdots,z_{n-1}\big)$ by using the tracking method from Section~\ref{sec:tracking} as follows.
    
    If $i=n-1$, define $\bfz^{(i)}(\Gamma)$ to be the empty sequence. Otherwise, track the topmost edge connected to $r_1$ in~$\gamma_i$. This process arrives first at tree $\gamma_{i+1}$.   When the tracking process arrives at tree $\gamma_j$, consider the left vertex $l$ corresponding to the arriving prefix $P$:
    
    \begin{itemize}
        \item[--] If $l$ is adjacent to {\bf both} right vertices, record $z_k=0$ for all $k\geq j$ and STOP. 
        \item[--]  If $l$ is {\bf only} adjacent to right vertex $r_1$, record $z_j$ to be $-1$ times the number of edges above $lr_1$ that are adjacent to $r_1$. If $j=n-1$, STOP. Otherwise, continue the tracking process in tree $\gamma_{j+1}$ with the left vertex corresponding to prefix $Px_j$.
        \item[--] If $l$ is {\bf only} adjacent to right vertex $r_2$, record $z_j$ to be the number of edges below $lr_2$ that are adjacent to $r_2$. If $j\leq n-2$, record $z_{j+1}$ to be $\emptyset$. If $j\geq n-2$, STOP. Otherwise, continue the tracking process in tree $\gamma_{j+2}$ with the left vertex corresponding to prefix $Py_j$.
    \end{itemize}

\end{definition}

This sequence of offsets provides numerical information about the tracking process. A $z_j=\emptyset$ corresponds to the tracked edge bypassing tree $\gamma_j$. The first $j$ for which $z_j=0$ corresponds to a left vertex corresponding to the incoming prefix in tree $\gamma_j$ being adjacent to both right vertices. A non-zero integer value for $z_j$ corresponds to the (signed) distance in tree $\gamma_j$ between the left vertex corresponding to the incoming prefix and the left vertex that is connected to both right vertices.

The following lemma follows directly from the definition of trackable.

\begin{lemma}
\label{lem:nozeroes}
    The topmost edge incident to $r_1$ in $\gamma_i$ is trackable if and only if $\bfz^{(i)}(\Gamma)$ contains no zeroes.
\end{lemma}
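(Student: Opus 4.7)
My plan is to show that Definition~\ref{def:groveoffsets} is essentially a numerical recording of the tracking process from Section~\ref{sec:tracking}, specialized to the topmost edge at $r_1$ in $\gamma_i$. Both procedures start with the same initial edge, and I would argue they proceed in lockstep through an identical sequence of trees, differing only in the bookkeeping they perform: the tracking process merely asks ``does this succeed?'' while $\bfz^{(i)}(\Gamma)$ additionally records positional information.

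First I would note that both procedures begin by examining the topmost edge connected to $r_1$ in $\gamma_i$; since $r_1$ corresponds to the edge $x_i : i \to i+1$, both procedures next visit $\gamma_{i+1}$ with the arriving prefix extended by $x_i$. I would then proceed by induction on the index $j$ of the tree being visited. At each such tree $\gamma_j$, the arriving left vertex $l$ has degree at least $1$ in a bipartite tree with exactly two right vertices, so $l$ is adjacent to either one or both of $r_1, r_2$. I would handle the three cases in parallel: (a) if $l$ is adjacent to both right vertices, tracking declares failure and Definition~\ref{def:groveoffsets} records $z_k = 0$ for all $k \geq j$; (b) if $l$ is adjacent only to $r_1$, tracking continues to $\gamma_{j+1}$ with prefix extended by $x_j$, while the definition records a negative offset $z_j$ and also continues to $\gamma_{j+1}$ with prefix extended by $x_j$; (c) if $l$ is adjacent only to $r_2$, tracking continues to $\gamma_{j+2}$ (skipping $\gamma_{j+1}$, since $y_j$ goes directly from $j$ to $j+2$ in $\cZig_n$) with prefix extended by $y_j$, while the definition records a positive offset $z_j$, sets $z_{j+1} = \emptyset$, and continues to $\gamma_{j+2}$.

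To conclude I would verify that the successful termination conditions coincide: tracking succeeds precisely when the outgoing edge taken at some step reaches vertex $n$, which happens at $j = n-1$ via either outgoing edge, or at $j = n-2$ via $y_{n-2}$. These match exactly the STOP clauses for nonzero offsets in Definition~\ref{def:groveoffsets} (namely ``if $j = n-1$'' in the $r_1$-only case, and ``if $j \geq n-2$'' in the $r_2$-only case). Combining these observations, tracking is successful if and only if at every visited tree the arriving left vertex has degree exactly $1$, which happens if and only if no $z_j$ equals $0$, yielding the lemma.

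I do not expect a significant obstacle here: the result is essentially a direct translation between two descriptions of the same walk through the grove. The only delicate point is the case check at the boundary indices $j = n-2$ and $j = n-1$, to confirm that the STOP conditions in Definition~\ref{def:groveoffsets} faithfully capture the ``reached vertex $n$'' termination of the tracking process in both the $r_1$-only and $r_2$-only cases.
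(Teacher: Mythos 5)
Your argument is correct and takes the same route the paper intends: the paper simply asserts that the lemma ``follows directly from the definition of trackable,'' and your proof is the explicit unwinding of that claim, observing that Definition~\ref{def:groveoffsets} runs the tracking process of Section~\ref{sec:tracking} verbatim while additionally recording offsets, and that a zero offset is produced precisely when tracking encounters a left vertex adjacent to both right vertices (noting, as you implicitly do, that the offsets in the $r_1$-only and $r_2$-only cases are necessarily nonzero since the unique crossing vertex of $\gamma_j$ lies strictly between). The only refinement worth adding to your writeup is to state that last fact explicitly — that a noncrossing bipartite tree on $p$ left and $2$ right vertices has exactly one left vertex of degree $2$, so the count giving $z_j$ in cases (b) and (c) is always at least $1$ — to rule out a spurious zero outside case (a).
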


The definition of the sequence of offsets of a flow $\phi$ is the translation of the definition of the sequence of offsets of its corresponding grove $\Gamma$.

\begin{theorem}
\label{thm:OffsetsEqual}
    Let $\Gamma\in \calG_n^{\textup{LRL}}$ and let $\phi\in\calF_n^\bbZ$ be its corresponding integer flow. For all $2\leq i\leq n-1$, The sequence of offsets $\bfz^{(i)}(\Gamma)$ in Definition~\ref{def:groveoffsets} equals the sequence of offsets $\bfz^{(i)}(\phi)$ of in Definition~\ref{def:flowoffsets}.
\end{theorem}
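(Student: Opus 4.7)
The plan is to show that the two sequences are computed by the same recursion, viewed through two lenses: one geometric (tracking an edge through the trees of the grove) and one numerical (accumulating flow values). The bijection between groves and flows recorded in Theorem~\ref{thm:bijectionGF} will be the tool that translates between the two.

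First I would set up notation and a dictionary. In each tree $\gamma_j$, since the tree is noncrossing bipartite with exactly two right vertices $r_1, r_2$ (corresponding to $x_j$ and $y_j$ respectively under the LRL framing), a short degree-counting argument shows that there is a unique left vertex $l_{m_j}$ adjacent to both right vertices; all left vertices strictly below $l_{m_j}$ are adjacent only to $r_1$, and all above only to $r_2$. Applying Theorem~\ref{thm:bijectionGF} then yields the identities
\begin{equation*}
    \phi(x_j) = m_j - 1, \qquad \phi(y_j) = p_j - m_j.
\end{equation*}
Writing $a_j$ for the index of the left vertex in $\gamma_j$ reached by the tracking process at step $j$, Definition~\ref{def:groveoffsets} simplifies in every case to $z_j = a_j - m_j$: negative if $l_{a_j}$ is only adjacent to $r_1$, positive if only adjacent to $r_2$, zero if adjacent to both.

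Next I would verify the base case. The topmost edge at $r_1$ in $\gamma_i$ is $l_{m_i} r_1$, so tracking continues along $x_i$ into $\gamma_{i+1}$. The LRL framing gives $y_{i-1} \prec_{\textup{In}(i+1)} x_i$, so the $x_i$-block sits at the top of $\gamma_{i+1}$; the within-block order preservation described in Section~\ref{sec:tracking} places the $l_{m_i}$-prefix at the very top, so $a_{i+1} = p_{i+1}$. Then $z_{i+1} = p_{i+1} - m_{i+1} = \phi(y_{i+1})$, matching the initialization in Definition~\ref{def:flowoffsets}.

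For the inductive step I would split into the three cases of Definition~\ref{def:flowoffsets}. When $z_j > 0$, tracking moves via $y_j$ into $\gamma_{j+2}$, and since $y_j \prec_{\textup{In}(j+2)} x_{j+1}$ the $y_j$-block sits at the bottom; preservation of within-block order gives $a_{j+2} = a_j - m_j + 1$, from which
\begin{equation*}
    z_{j+2} = a_{j+2} - m_{j+2} = (z_j + 1) - (\phi(x_{j+2}) + 1) = z_j - \phi(x_{j+2}),
\end{equation*}
while $z_{j+1} = \emptyset$ on both sides. When $z_j < 0$, tracking moves via $x_j$ into $\gamma_{j+1}$ where the $x_j$-block sits at the top, so $a_{j+1} = p_{j+1} - m_j + a_j$ and $z_{j+1} = (a_j - m_j) + (p_{j+1} - m_{j+1}) = z_j + \phi(y_{j+1})$. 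When $z_j = 0$, both recipes terminate and pad with zeros. The boundary cases $j \in \{n-2, n-1\}$ reduce to parallel termination conditions on both sides.

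The main obstacle will be pinning down the block placements in Section~\ref{sec:tracking} with enough precision: justifying that a tracked prefix lands in the claimed block of the next tree, that the block's position (top for $x_j$, bottom for $y_j$) is dictated by the LRL framing on incoming edges, and that within each block the order is inherited from the source tree so that the topmost vertex at $r_1$ in $\gamma_j$ maps predictably into $\gamma_{j+1}$ (and likewise for $r_2$ into $\gamma_{j+2}$). Once these spatial bookkeeping lemmas are in place, the rest of the proof reduces to the algebraic identities above.
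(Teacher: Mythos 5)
Your proposal is correct and follows essentially the same inductive structure as the paper's proof: verify $z_{i+1}=\phi(y_{i+1})$ as the base case, then split the inductive step on the sign of $z_j$ using the LRL ordering of incoming edges to place the $x_j$-block at the top of $\gamma_{j+1}$ and the $y_j$-block at the bottom of $\gamma_{j+2}$. The notation $z_j = a_j - m_j$ together with $\phi(x_j)=m_j-1$ and $\phi(y_j)=p_j-m_j$ is a cleaner bookkeeping device than the paper's purely verbal counting, but the underlying argument is identical.
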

\begin{proof}
If $i=n-1$, the sequence of offsets is the empty sequence in both definitions. When $i<n-1$, we first show that the values of $z_{i+1}$ agree and that the rest of the values agree by induction. 

We find $z_{i+1}$ from $\Gamma$ by tracking the topmost edge $lr_1$ connected to $r_1$ in $\gamma_i$. This edge arrives in $\gamma_{i+1}$ as the topmost left vertex~$l'$, which must be adjacent to $r_2'$ in $\gamma_{i+1}$. Definition~\ref{def:groveoffsets} gives $z_{i+1}$ as the number of edges in $\gamma_{i+1}$ below $l'r_2'$ that are adjacent to $r_2'$. By Theorem~\ref{thm:bijectionGF}, this equals $\phi(y_{i+1})$, which is exactly the value of $z_{i+1}$ from $\phi$ given in Definition~\ref{def:flowoffsets}. 

Now suppose that $\bfz^{(i)}(\phi)$ and $\bfz^{(i)}(\Gamma)$ agree up to the $j$-th entry and $z_j\ne 0$. We will show that the next entry (or entries) also agree. 

Suppose $z_j<0$. This means that in tree $\gamma_j$, the tracked edge is $lr_1$, which is the $(-z_j+1)$-st edge from the top of the edges adjacent to $r_1$. This, in turn, ensures that this edge becomes the $(-z_j+1)$-st left vertex $l'$ from the top in tree $\gamma_{j+1}$. There are $\phi(y_{j+1})+1$ left vertices adjacent to $r_2'$ in $\gamma_{j+2}$. So the signed distance between $l'$ and the vertex adjacent to both $r_1'$ and $r_2'$ is $z_j+\phi(y_{j+1})$ and the values of $\bfz^{(i)}(\phi)$ and $\bfz^{(i)}(\Gamma)$ agree up to the $(j+1)$-st entry.

Suppose $z_j>0$. In tree $\gamma_j$, the tracked edge is $lr_2$, which is the $(z_j+1)$-st edge from the bottom of the edges adjacent to $r_2$. This, in turn, ensures that this edge becomes the $(z_j+1)$-st left vertex $l'$ from the bottom in tree $\gamma_{j+2}$. There are $\phi(x_{j+2})+1$ left vertices adjacent to $r_1'$ in $\gamma_{j+2}$. So the signed distance between $l'$ and the vertex adjacent to both $r_1'$ and $r_2'$ is $z_j-\phi(x_{j+2})$. In both $\bfz^{(i)}(\phi)$ and $\bfz^{(i)}(\Gamma)$, $z_{j+1}=\emptyset$ (if it exists), and we also have agreement in $z_{j+2}$, so the values agree up to the $(j+2)$-st entry. 

If at any point $z_j=0$, then $\bfz^{(i)}(\Gamma)$ and $\bfz^{(i)}(\phi)$ agree for all subsequent entries. This completes the inductive step.
\end{proof}

\begin{theorem}
\label{thm:GroveAdjacency}
Let $\Gamma$ and $\Gamma'$ be two groves that satisfy $\Gamma'=m_i^+(\Gamma)$ for some~$i$. The two simplices in $\calT_n^{\textup{LRL}}$ corresponding to $\Gamma$ and $\Gamma'$ are adjacent in $D(\calT_n^{\textup{LRL}})$ if and only if the sequence of offsets of $\Gamma$ at $i$ contains no zeroes.
\end{theorem}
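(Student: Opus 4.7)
The plan is to reduce the simplex-adjacency question to a counting question about routes sharing a prescribed prefix in the cliques $C, C'$ corresponding to $\Gamma, \Gamma'$, and then to invoke Lemmas~\ref{lem:trackable} and~\ref{lem:nozeroes} to close the loop. Recall that the move $m_i^+$ deletes the edge $l_jr_1$ from $\gamma_i$ (representing the prefix $Px_i$, where $P$ is the prefix at the pivot $l_j$), installs $l_{j-1}r_2$ in $\gamma_i'$ (representing $P'y_i$), deletes the topmost left vertex of $\gamma_{i+1}$ (whose prefix is $Px_i$), and inserts into $\gamma_{i+2}'$ a new bottommost left vertex, adjacent only to $r_1$, whose prefix is $P'y_i$.

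My first step is a structural claim: $C\setminus C'$ is exactly the set of routes in $C$ with prefix $Px_i$, and $C'\setminus C$ is exactly the set of routes in $C'$ with prefix $P'y_i$. The inclusions $\supseteq$ are immediate because $l_jr_1\notin\gamma_i'$ and $l_{j-1}r_2\notin\gamma_i$. For the reverse inclusions, I would verify that any route in $C$ not beginning with $Px_i$ has its grove-encoding preserved across the three modified trees: outside the pivot edge in $\gamma_i$, outside the removed top vertex in $\gamma_{i+1}$, and outside the new bottom vertex in $\gamma_{i+2}'$, every adjacency coincides. For trees $\gamma_k$ with $k\ge i+3$, the observation from the proof of Theorem~\ref{thm:GrovesAdjacent} that $Px_iy_{i+1}$ and $P'y_ix_{i+2}$ occupy the same position in $\gamma_{i+3}$ says exactly that $\gamma_{i+3}$ and $\gamma_{i+3}'$ agree as abstract bipartite trees; hence any route landing at a position other than this single relabeled one is preserved.

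With the structural claim in hand, the theorem follows cleanly. Since $\calT_n^{\textup{LRL}}$ is unimodular, the maximal cliques $C$ and $C'$ have the same cardinality, so $|C\setminus C'|=|C'\setminus C|$. Hence the simplices are adjacent (equivalently, $|C\triangle C'|=2$) if and only if there is a unique route in $C$ with prefix $Px_i$. Lemma~\ref{lem:trackable} rephrases this as trackability of the edge $l_jr_1$ in $\Gamma$, and Lemma~\ref{lem:nozeroes} rephrases trackability as $\bfz^{(i)}(\Gamma)$ containing no zeros.

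The main obstacle is the structural claim. Justifying that no ``collateral'' routes slip through the modifications requires combining the topmost/bottommost position-matching from Theorem~\ref{thm:GrovesAdjacent} with the local noncrossing-tree geometry. In particular, one should note that $l_{p_{i+1}}$ cannot be adjacent only to $r_1$ in $\gamma_{i+1}$, so the applicability of $m_i^+$ actually forces $l_{p_{i+1}}$ to be adjacent only to $r_2$, consistent with the flow change $\phi'(y_{i+1})=\phi(y_{i+1})-1$; this pins down which edge degree shifts and hence which position in $\gamma_{i+3}$ is relabeled. Edge cases when $i\in\{n-2, n-1\}$ are simpler: the sequence of offsets becomes short or empty and the unique continuation is immediate because vertex $n$ is the sink.
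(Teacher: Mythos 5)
Your proof is correct and follows essentially the same route as the paper: both reduce adjacency to counting routes in $C$ with prefix $Px_i$, and both translate that count via Lemma~\ref{lem:trackable} into trackability of the topmost edge at $r_1$ in $\gamma_i$ and via Lemma~\ref{lem:nozeroes} into the absence of zeros in $\bfz^{(i)}(\Gamma)$. The main difference is one of rigor: the paper compresses the middle step into the assertion that removing $e$ from $\Gamma$ makes ``multiple routes with prefix $Pe$'' equivalent to ``$C$ and $C'$ differ by at least two routes,'' implicitly relying on the detailed tree-by-tree comparison carried out in the proof of Theorem~\ref{thm:GrovesAdjacent}. You isolate this as an explicit structural claim ($C\setminus C'$ consists exactly of routes with prefix $Px_i$ and $C'\setminus C$ of routes with prefix $P'y_i$), close it with the cardinality argument $|C|=|C'|\Rightarrow|C\setminus C'|=|C'\setminus C|$, and flag exactly where the noncrossing-tree position-matching from Theorem~\ref{thm:GrovesAdjacent}'s proof must be invoked to rule out collateral changes. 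That accounting is a genuine strengthening of the exposition, though the underlying argument is the same; the paper's brevity here is defensible only because the reader is expected to have absorbed the earlier proof's analysis of $\gamma_i,\gamma_{i+1},\gamma_{i+2},\gamma_{i+3}$.
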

\begin{proof}
Suppose $\Gamma$ and $\Gamma'$ satisfy $\Gamma'=m_i^+(\Gamma)$ for some~$i$. Let $C$ and $C'$ be the maximal cliques that correspond to $\Gamma$ and $\Gamma'$, respectively. 

By Lemma~\ref{lem:nozeroes}, the sequence of offsets of $\Gamma$ at $i$ contains zeroes if and only if the topmost edge $e$ incident to $r_1$ in $\gamma_i$ (corresponding to a prefix $P$) is not trackable. By Lemma~\ref{lem:trackable}, this is true if and only if there are multiple distinct routes $R_1$ and $R_2$ that have prefix~$Pe$. Performing the elementary move $m_i^+$ removes $e$ from $\Gamma$; as a consequence, the previous statement is equivalent to $C$ and $C'$ differing by at least two routes, or, in other words, their corresponding simplices are not adjacent. 
\end{proof}

Theorem~\ref{thm:AdjacencyCriterion} now follows directly as a corollary of Theorems~\ref{thm:OffsetsEqual} and \ref{thm:GroveAdjacency} to provide a complete combinatorial characterization of $D(\calT_n^{\textup{LRL}})$.

\thmAdjacencyCriterion*


\section{The $h^*$-vector}
\label{sec:hstar}

A popular refinement of the normalized volume of a flow polytope is its $h^*$-vector; our constructions provide two new conjectured statistics that both recover this vector. We now provide some context and refer the reader to \cite[Chap.~3, 10]{beckrobins} for more background information.

For an integral convex polytope $\calP$ of dimension $d$ in $\bbR^n$, the Ehrhart series $L_\calP(t)$ of $P$ is the generating function that encodes the number of integer lattice points in dilates $t\calP$ of $\calP$. The $h^*$-polynomial of $\calP$ is the polynomial $h^\star_\calP(z)$ defined by 
\[1+\sum_{t\geq 1}L_\calP(t)z^t=\frac{h^*_\calP(z)}{(1-z)^{d+1}}.\]
The vector of coefficients of $h^*_\calP(z)$ is called the $h^*$-vector of $\calP$. Stanley \cite{stanley1980decompositions} proved that the coefficients are non-negative integers and the sum of the entries is $\vol(\calP)$.

When a polytope $\calP$ admits a regular unimodular triangulation $\calT$, we can explore $\calT$ as a simplicial complex. The simplicial complex has an associated $h$-polynomial which, in this special case, is equal to the $h^*$-polynomial of $\calP$.

\begin{theorem}[\cite{beckrobins}]
If $\calP$ is an integral polytope that admits a unimodular
triangulation~$\calT$, then the $h^*$-polynomial of $\calP$ is given by the $h$-polynomial of the triangulation $\calT$
\end{theorem}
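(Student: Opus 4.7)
The plan is to use the classical technique of half-open decompositions, in the spirit of Beck and Robins. First I would reduce the problem to a computation on a single unimodular simplex. Recall that a unimodular $d$-simplex $\sigma\subset\bbR^n$ has normalized volume $1$, and a direct lattice-point count gives $L_\sigma(t)=\binom{t+d}{d}$, so $h^*_\sigma(z)=1$. The full strength of this computation comes from its half-open refinement: for a unimodular $d$-simplex with exactly $k$ specified facets removed, the Ehrhart series equals $z^k/(1-z)^{d+1}$, so the $h^*$-polynomial of that half-open simplex is $z^k$.

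Next I would build a half-open decomposition of $\calP$ from the triangulation $\calT$. Because $\calT$ is a regular triangulation, it is shellable: fix a shelling $\sigma_1,\sigma_2,\ldots,\sigma_s$ of its maximal simplices. Equivalently (and more geometrically), pick a point $\bfp\in\bbR^n$ in the interior of a generic chamber just outside $\calP$ and, for each $\sigma_i$, remove from $\sigma_i$ the facets that are visible from $\bfp$ and are shared with some $\sigma_j$ with $j<i$. This produces pairwise disjoint half-open simplices $\sigma_i^\circ$ whose union is $\calP$, with each $\sigma_i^\circ$ a unimodular $d$-simplex missing exactly $k(\sigma_i)$ facets.

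Now I would combine these ingredients. By additivity of lattice-point enumeration over the disjoint union,
\[h^*_\calP(z)=\sum_{i=1}^{s}z^{k(\sigma_i)}.\]
To conclude the theorem, I would identify the right-hand side with the $h$-polynomial $h(\calT,z)$ of the triangulation viewed as a simplicial complex. This is precisely the shelling formula for the $h$-polynomial: the coefficient of $z^k$ in $h(\calT,z)$ counts the number of maximal simplices whose restriction set under the shelling has size $k$, and the restriction set of $\sigma_i$ in the above half-open decomposition is exactly the set of removed facets. Thus $h^*_\calP(z)=h(\calT,z)$.

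The main obstacle will be the last identification: one must match the geometric ``visible facets'' from the point $\bfp$ with the combinatorial restriction sets dictated by the shelling order, and verify that both produce the same nonnegative integer $k(\sigma_i)$ for every maximal simplex. Once this matching is made, the theorem follows immediately from the computation of $h^*$ for a half-open unimodular simplex and the additivity of Ehrhart series.
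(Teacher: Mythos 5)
The paper does not prove this statement --- it is quoted directly from Beck and Robins' text, so there is no in-paper argument to compare against. Your half-open decomposition proof is the standard one and is essentially the argument given in the cited reference: reduce to the computation $h^*=z^k$ for a half-open unimodular simplex with $k$ facets removed, partition $\calP$ into such half-open simplices, sum by additivity of Ehrhart series, and match the removed-facet counts with the restriction sets that define the $h$-polynomial.

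One small point worth flagging: the theorem as stated hypothesizes only that $\calT$ is unimodular, yet you invoke \emph{regularity} of $\calT$ to deduce shellability. Regularity is not part of the hypothesis, so as written the argument does not cover every unimodular triangulation. The geometric version of your decomposition --- fixing a generic point $\bfp$ outside $\calP$ and, for each top-dimensional simplex, removing the shared facets visible from $\bfp$ --- can be carried out for any triangulation without choosing a shelling order first, and one can verify directly that $\sum_i z^{k(\sigma_i)}$ computes the $f$-to-$h$ transform of the underlying abstract complex; this is the route that avoids any shellability assumption. In the context of this paper the gap is harmless in any case, since $\calT_n^{\textup{LRL}}$ is a DKK triangulation and DKK triangulations are regular (hence shellable), so your shelling-based reading applies directly to the triangulation actually used here.
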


\begin{definition}
A {\em shelling} of a triangulation $\calT$ is an ordering $\Delta_1$, $\Delta_2$, $\hdots$ of the simplices of~$\calT$ such that for all $i\geq 2$, $\Delta_i\cap\big(\Delta_1\cup\cdots\cup \Delta_{i-1}\big)$ is a union of some number $s_i$ of codimension~1 faces of $\Delta_i$. A triangulation $\calT$ is {\em shellable} if a shelling of $\calT$ exists.
\end{definition}

Stanley \cite[Prop.~3.6]{stanley1979balanced} proved that if a simplicial complex is shellable, its $h$-vector can be calculated combinatorially. The coefficient $h_j$ is the number of simplices $\Delta_i$ such that $s_i=j$. 

\medskip
We conjecture two shellings of $\calT_n^{\textup{LRL}}$. To do so, convert the undirected graph $D(\calT_n^{\textup{LRL}})$ into a directed graph $\vec{D}(\calT_n^{\textup{LRL}})$ by orienting its edges in the same direction as the positive basis vectors $\bfe_k$ when $D(\calT_n^{\textup{LRL}})$ is embedded in $\Square_{n-2}$ as in the proof of Theorem~\ref{thm:embedding}. 

Define the {\em modified lexicographic ordering} on simplices of $\calT_n^{\textup{LRL}}$ by associating to every simplex its coordinate vector from the proof of Theorem~\ref{thm:embedding}. First order these vectors by the sum of their coordinates and then order the vectors that have the same sum lexicographically. 

\begin{conjecture}
\label{conj:shellability}
The DKK-triangulation $\calT_n^{\textup{LRL}}$ is shellable because both the modified lexicographic ordering and its reverse are shellings.
\end{conjecture}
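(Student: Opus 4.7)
The plan is to verify the shelling condition directly for the modified lexicographic ordering on the simplices of $\calT_n^{\textup{LRL}}$, leveraging the lattice embedding of Theorem~\ref{thm:embedding} and the adjacency characterization of Theorem~\ref{thm:AdjacencyCriterion}. Write $d$ for the dimension of $\calF_{\cZig_n}(\bfu)$, so that every simplex has exactly $d+1$ facets. The shelling condition requires that, for every simplex $\Delta_i$ with $i \geq 2$, the intersection $\Delta_i \cap \bigcup_{j<i}\Delta_j$ be a nonempty union of facets of $\Delta_i$.

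I would first describe explicitly the candidate collection of facets. Given $\Delta_i$ with coordinate $\bfv_i \in \bbZ_{\geq 0}^{n-2}$, its adjacent predecessors in the modified lex order are precisely the simplices with coordinate $\bfv_i - \bfe_k$ for some $k$, and by Theorem~\ref{thm:AdjacencyCriterion} these correspond exactly to indices $k$ for which $m_k^-$ applies and satisfies the no-zeros-in-offsets criterion. Let $S^-(\Delta_i)$ denote this set and let $F_k$ be the facet of $\Delta_i$ shared with the corresponding predecessor; using the grove description of $m_k^-$ in Definition~\ref{def:move} together with Theorem~\ref{thm:bijectionCG}, I would identify the unique route of the maximal clique $C(\Delta_i)$ omitted by $F_k$. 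The inclusion $\bigcup_{k \in S^-(\Delta_i)} F_k \subseteq \Delta_i \cap \bigcup_{j<i}\Delta_j$ is then immediate, and nonemptiness of $S^-(\Delta_i)$ for $i \geq 2$ should follow from the connectivity of $D(\calT_n^{\textup{LRL}})$ combined with the monotone structure of the embedding.

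The main step is the reverse inclusion: for any lex-predecessor $\Delta_j$ with $\Delta_i \cap \Delta_j = F$ a face of $\Delta_i$, I would show that $F \subseteq F_k$ for some $k \in S^-(\Delta_i)$. My approach is to induct on the $\ell^1$-distance $\|\bfv_i - \bfv_j\|_1$ along a monotone lattice path from $\bfv_j$ to $\bfv_i$ in $\Square_{n-2}$, reducing to the adjacent case via an auxiliary lemma that produces an initial step of the path along which the no-zeros adjacency criterion holds. The reverse lex ordering case would then follow by a symmetric argument using $m_k^+$ in place of $m_k^-$ and starting from the unique simplex of maximum coordinate sum. The main obstacle will be the auxiliary lemma: the results of Section~\ref{sec:proofs} only describe adjacency between lattice-neighbors in $\Square_{n-2}$ and do not directly control intersections of more distant simplices. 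Proving that such an initial step with no zero offsets always exists will require a careful compatibility analysis of the offset sequences from Definition~\ref{def:groveoffsets} and Definition~\ref{def:flowoffsets} along chains of elementary moves, likely using the grove tracking machinery of Section~\ref{sec:tracking}.
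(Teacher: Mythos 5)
The statement you are trying to prove is labeled a \emph{conjecture} in the paper, and the authors explicitly list ``Prove Conjecture~\ref{conj:shellability}'' as one of the open questions in Section~\ref{sec:open}. There is no proof in the paper to compare your attempt against, so any genuine argument you produce would be a new result, not a reconstruction.

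As written, your proposal is a strategy sketch rather than a proof: you yourself flag the ``auxiliary lemma'' --- that for any lex-earlier $\Delta_j$, the face $\Delta_i \cap \Delta_j$ lies in a facet $F_k$ with $k \in S^-(\Delta_i)$ --- as an unresolved obstacle, and that lemma is exactly where the whole difficulty of the conjecture lives. The earlier steps are the routine part of a shelling argument. Beyond what you flag, there are two concrete technical issues. First, the claimed induction ``along a monotone lattice path from $\bfv_j$ to $\bfv_i$'' is not well posed: the modified lexicographic order puts $\Delta_j$ before $\Delta_i$ when $\bfv_j$ has smaller coordinate sum, or equal sum and is lex-smaller, but neither condition implies $\bfv_j \leq \bfv_i$ coordinate-wise, so a monotone lattice path from $\bfv_j$ to $\bfv_i$ need not exist. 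You would need to handle the incomparable case, for instance by routing through a meet or by a different descent argument. Second, nonemptiness of $S^-(\Delta_i)$ for $i \geq 2$ does not follow from connectivity of $D(\calT_n^{\textup{LRL}})$ alone; connectivity gives \emph{some} path to $\Delta_1$, not one that begins with a step along $-\bfe_k$ satisfying the no-zeros criterion of Theorem~\ref{thm:AdjacencyCriterion}. You would need a genuine argument (e.g., that every non-initial simplex admits at least one applicable $m_k^-$ that does not produce a zero in its offset sequence), and this too seems to require the detailed compatibility analysis you defer. Until those pieces are filled in, the conjecture remains open.
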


Based on this construction, we can define two statistics $\mathsf{sz}$ and $\mathsf{zs}$\footnote[2]{The names $\mathsf{sz}$ and $\mathsf{zs}$ come from the concepts of S-to-Z and Z-to-S moves in a grove. See Definition~\ref{def:move}.} on combinatorial objects that correspond to the vertices of $D(\calT)$: simplices $\Delta\in \calT_n^{\textup{LRL}}$, integer flows $\phi\in \calF_n^\bbZ$, groves $\Gamma\in \calG_n^{\textup{LRL}}$, and maximal cliques~$C\in \calC_n^{\textup{LRL}}$. 

\begin{definition}
    The statistic $\mathsf{sz}(\Delta)$ is the number of edges of $\vec{D}(\calT_n^{\textup{LRL}})$ oriented away from $\Delta$; the statistic $\mathsf{zs}(\Delta)$ is the number of edges of $\vec{D}(\calT_n^{\textup{LRL}})$ oriented toward $\Delta$.
\end{definition}

In other words, $\mathsf{sz}$ counts combinatorial objects that are adjacent along a positive elementary move and $\mathsf{zs}$ counts combinatorial objects that are adjacent along a negative elementary move. Assuming Conjecture~\ref{conj:shellability}, these two statistics recover the $h^*$-polynomial of $\mathcal{F}_{\cZig_{n}}(\bfu)$.

\begin{conjecture}
\label{conj:szzs}
    The $h^*$-polynomial of $\mathcal{F}_{\cZig_{n}}(\bfu)$ is
    \(\displaystyle \sum_{\phi\in\calF_n^\bbZ} z^{\mathsf{sz}(\phi)}=\sum_{\phi\in \calF_n^\bbZ} z^{\mathsf{zs}(\phi)}.\)
\end{conjecture}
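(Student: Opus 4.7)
The plan is to reduce Conjecture~\ref{conj:szzs} to Conjecture~\ref{conj:shellability} via Stanley's formula for the $h$-vector of a shellable simplicial complex, and then focus effort on the shelling itself. Since $\calT_n^{\textup{LRL}}$ is unimodular, its $h$-vector equals $h^*_{\calF_{\cZig_n}(\bfu)}$. Assuming the forward modified lexicographic ordering $\Delta_1,\Delta_2,\ldots$ is a shelling, \cite[Prop.~3.6]{stanley1979balanced} states that $h_k$ is the number of indices $i$ for which the restriction set
\[
S_i \,=\, \{R\in C_i : \mathrm{conv}(C_i\setminus\{R\})\subseteq \Delta_1\cup\cdots\cup\Delta_{i-1}\}
\]
has size $k$. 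Because two simplices in a unimodular triangulation share a facet iff they are adjacent in $D(\calT_n^{\textup{LRL}})$, membership $R\in S_i$ is equivalent to the existence of an earlier simplex $\Delta_j$ adjacent to $\Delta_i$ with $R\notin C_j$. An elementary move shifts the coordinate sum of the lattice embedding by $\pm 1$, so in the modified lex ordering the earlier adjacent simplices of $\Delta_i$ are exactly those of the form $m_k^-(\Delta_i)$; hence $|S_i|=\mathsf{zs}(\Delta_i)$ and $h^*(z)=\sum_{\phi\in\calF_n^\bbZ}z^{\mathsf{zs}(\phi)}$. The reverse modified lex ordering reverses the roles of $m_k^+$ and $m_k^-$ and yields $\sum_{\phi\in\calF_n^\bbZ}z^{\mathsf{sz}(\phi)}$, so the equidistribution falls out as a byproduct.

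To establish Conjecture~\ref{conj:shellability} the main task is to verify the restriction property: for every $j<i$, the common face $\mathrm{conv}(C_i\cap C_j)$ must lie in some facet $\mathrm{conv}(C_i\setminus\{R\})$ with $R\in S_i$. My strategy is to extract from the coordinate-vector difference of $\Delta_i$ and $\Delta_j$ a canonical index $k$ (either a coordinate where the sums first disagree, or---when the sums are equal---the leftmost coordinate of lex disagreement) and show that $m_k^-$ is a valid adjacency move at $\Delta_i$ whose exchanged route lies in $C_i\setminus C_j$. Validity of $m_k^-$ is decided by Theorem~\ref{thm:GroveAdjacency}, reducing to a no-zero condition on the sequence of offsets of the corresponding grove at index $k$, and the identity of the exchanged route is pinned down locally by the three-tree modification described in the proof of Theorem~\ref{thm:GrovesAdjacent}.

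The main obstacle is showing that this canonical $k$ always satisfies both conditions simultaneously, especially when the lex tiebreaker applies across a long block of equal coordinate values. I expect this to require an induction on the number of coordinates in which $\Delta_i$ and $\Delta_j$ disagree, combined with a careful tracking-based analysis of the grove in the neighborhood of $k$, using the grove dynamics from Section~\ref{sec:tracking} to force the offset sequence to stay nonzero and to ensure that the swapped-out route is genuinely absent from $C_j$. A secondary obstacle is confirming that non-adjacent earlier simplices do not contribute lower-dimensional free faces to $\Delta_i\cap(\Delta_1\cup\cdots\cup\Delta_{i-1})$ that escape the facet cover, which should follow once the restriction property is in hand since every point of such an intersection lies in some $\mathrm{conv}(C_i\cap C_j)$.
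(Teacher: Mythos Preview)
The paper does not prove this statement; it is stated as a conjecture, explicitly conditional on Conjecture~\ref{conj:shellability}, which the authors list among their open problems. So there is no ``paper's own proof'' to compare against. Your reduction---unimodular triangulation, Stanley's shelling formula, and the observation that an elementary move changes the coordinate sum by exactly $\pm 1$ so that earlier adjacent simplices in the modified lex order are precisely the $m_k^-$-neighbors---is correct and is exactly the reasoning the paper sketches in the paragraph following Conjecture~\ref{conj:shellability}. On that part you and the paper agree completely.

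Where you go beyond the paper is in outlining an attack on Conjecture~\ref{conj:shellability} itself. This is a genuine open problem, and your sketch is a plan rather than a proof. A few concrete points. First, the phrase ``a coordinate where the sums first disagree'' is ill-formed: the coordinate sum is a single number, so either it differs or it does not. Presumably you mean either a coordinate where $\Delta_i$ exceeds $\Delta_j$ (when the total sums differ) or something like a first index of disagreement in the coordinate vectors; this needs to be made precise before the induction can be set up. Second, and more seriously, even once you select a candidate index $k$, you must show that the elementary move $m_k^-$ both applies to $\phi_i$ \emph{and} yields an adjacent simplex (the offset criterion of Theorem~\ref{thm:AdjacencyCriterion}); nothing in the coordinate comparison with an arbitrary earlier $\Delta_j$ obviously forces the offset sequence of $\phi_i$ at $k$ to be zero-free. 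You flag this as ``the main obstacle,'' and it is: this is exactly the content of the open problem, and your proposal does not yet contain a mechanism for it. The secondary obstacle you mention is not an issue---once the restriction property holds, the shelling condition follows formally.
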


In the example from Section~\ref{sec:example}, the modified lexicographic ordering of $\calT_4^{\textup{LRL}}$ is \[\big(\raisebox{-.02in}{\,\triangled{$\mathsf{A}$}}, \raisebox{-.02in}{\,\triangled{$\mathsf{B}$}}, \raisebox{-.02in}{\,\triangled{$\mathsf{C}$}}, \raisebox{-.02in}{\,\triangled{$\mathsf{D}$}}, \raisebox{-.02in}{\,\triangled{$\mathsf{E}$}}\big).\] 
From Figure~\ref{fig:zig4projection} we compute 
\[\textup{$\sz\big(\raisebox{-.02in}{\,\triangled{$\mathsf{A}$}}\big)=1$,
$\sz\big(\raisebox{-.02in}{\,\triangled{$\mathsf{B}$}}\big)=2$,
$\sz\big(\raisebox{-.02in}{\,\triangled{$\mathsf{C}$}}\big)=1$,
$\sz\big(\raisebox{-.02in}{\,\triangled{$\mathsf{D}$}}\big)=1$, and
$\sz\big(\raisebox{-.02in}{\,\triangled{$\mathsf{E}$}}\big)=0$,}\] 
as well as
\[\textup{$\zs\big(\raisebox{-.02in}{\,\triangled{$\mathsf{A}$}}\big)=0$,
$\zs\big(\raisebox{-.02in}{\,\triangled{$\mathsf{B}$}}\big)=1$,
$\zs\big(\raisebox{-.02in}{\,\triangled{$\mathsf{C}$}}\big)=1$,
$\zs\big(\raisebox{-.02in}{\,\triangled{$\mathsf{D}$}}\big)=1$, and
$\zs\big(\raisebox{-.02in}{\,\triangled{$\mathsf{E}$}}\big)=2$.}\]
We see that indeed,  \[\sum_{\phi\in\calF_4^\bbZ} t^{\mathsf{sz}(\phi)}=\sum_{\phi\in\calF_4^\bbZ} t^{\mathsf{zs}(\phi)}=1+3z+z^2=h^*\big(\mathcal{F}_{\cZig_{4}}(\bfu)\big).\]

The $h^*$-polynomials of $\mathcal{F}_{\cZig_{n}}(\bfu)$ may be referred to as the zig-zag Eulerian polynomials (see \cite{petersen2024zigzag}) and have been computed using other statistics. In \cite{coons19}, Coons and Sullivant computed $h^*$ using a {\em swap statistic} $\mathsf{swap}$ on alternating permutations. Separately, building on Ayyer, Josuat-Verg\`es, and Ramassamy's \cite{ajr20}, Gonz\'alez D'Le\'on, Hanusa, Morales, and Yip \cite{dleon23} computed $h^*$ using a {\em descent statistic} $\mathsf{des}$ on cyclic permutations. Table~\ref{table:statistics} calculates the $\sz$ and $\zs$ statistics for all $\phi\in\calF_5^\bbZ$.
We see that $\sz$ and $\zs$ are equidistributed with $\mathsf{swap}$ and $\mathsf{des}$ but match neither of them. Indeed, 
\[\sum_{\phi\in\calF_5^\bbZ} t^{\mathsf{sz}(\phi)}=\sum_{\phi\in\calF_5^\bbZ} t^{\mathsf{zs}(\phi)}=(1+7t+7t^2+t^3),\] 
which agrees with $h^*\big(\mathcal{F}_{\cZig_{5}}(\bfu)\big)$ as we expect from Conjecture~\ref{conj:szzs}.

\begin{table}
    \centering
    \input{figures/statistics.tex}\medskip
    \caption{The sixteen alternating permutations, alternating permutation inverses, integer flows, and circular permutations for $\cZig_5$ and their associated swap, S-to-Z, Z-to-S, and descent statistics. The shaded row is discussed in the text.}
    \label{table:statistics}
\end{table}

\medskip
We conclude this section by compiling a guide to walk through the bijections between these combinatorial objects and the computations of their associated statistics. The shaded row of Table~\ref{table:statistics}, corresponding to the alternating permutation $\alpha=45231$, will serve as a running example.

Starting with an alternating $n$-permutation $\alpha=\alpha_1\alpha_2\cdots\alpha_n$, first compute its inverse permutation $\beta=\beta_1\beta_2\cdots\beta_n$. The $\mathsf{swap}$ statistic (see \cite[Definition~1.2]{coons19}) is computed by counting the number of indices $i$ such that $\beta_i<\beta_{i+1}-1$.  

The inverse permutation of $\alpha=45231$ is $\beta=53412$. Since $5\geq 2$, $3\geq 3$, $4\geq 0$, and~$1\geq 1$, we compute that $\swap(\alpha)=0$.

We now determine the integer flow $\phi$ that corresponds to $\alpha$ by following \cite[Figure~11]{mms19}. We start with the inverse permutation $\beta$ and construct a maximal clique $C$ on the planar framing of $\Zig_{n+2}$. See Figure~\ref{fig:zigzag} for the appropriate a planar embedding except with vertices numbered $0$ through $n+1$; in this picture, label the regions containing the red vertices from left to right by the numbers $1$ through $n$. 

Start constructing $C$ with the path $P_0$ that follows the lowest edges of $\Zig_{n+2}$. For $1\leq i\leq n$, we will augment $C$ to include a path $P_i$ determined from the path $P_{i-1}$ by replacing the edges below the region labeled $\beta_i$ by the edges above the region labeled $\beta_i$. The set $C=\{P_0,P_1,\hdots,P_n\}$ is then a maximal clique. Paths in $\Zig_{n+2}$ directly correspond to paths in $\cZig_n$ by ignoring the contracted edges $x_0$ and $x_{n+1}$.

In our example, the maximal clique $C$ is therefore: \[\textup{$P_0=y_0y_2y_4$, $P_1=y_0y_2x_4$, $P_2=y_0x_2x_3x_4$, $P_3=y_0x_2y_3$, $P_4=x_1x_2y_3$, and $P_5=y_1y_3$}.\] This maximal clique is then converted to the flow $\phi$ shown here by Theorem~\ref{thm:bijectionCG}.

\begin{figure}[h!]
$\phi$\,%
\raisebox{-.35in}{
\begin{tikzpicture}[scale=0.75]
    \filldraw[black] (0,0) circle (5pt) node[](a1) {};
	\filldraw[black] (2,0) circle (5pt) node[](a2) {};
	\filldraw[black] (4,0) circle (5pt) node[](a3) {};
	\filldraw[black] (6,0) circle (5pt) node[](a4) {};
    \filldraw[black] (8,0) circle (5pt) node[](a5) {};

    \draw[->] (a1)--(a2);		
	\draw[->] (a2)--(a3);		
	\draw[->] (a3)--(a4);		
	\draw[->] (a4)--(a5);		

	\draw[->]    (a1) to[out=40,in=140] (a2);
	\draw[->]    (a1) to[out=60,in=120] (a3);
	\draw[->]    (a2) to[out=60,in=120] (a4);	
	\draw[->]    (a3) to[out=60,in=120] (a5);	
	\draw[->]    (a4) to[out=40,in=140] (a5);	
 
    \node[] at (1,0.2) {\scriptsize $0$};
    \node[] at (3,0.2) {\scriptsize $1$};
    \node[] at (5,0.2) {\scriptsize $0$};
    \node[] at (7,0.2) {\scriptsize $1$};
    \node[] at (1,0.71) {\scriptsize $0$};
    \node[] at (2,1.45) {\scriptsize $0$};
    \node[] at (4,1.45) {\scriptsize $0$};
    \node[] at (6,1.45) {\scriptsize $2$};
    \node[] at (7,0.71) {\scriptsize $0$};
\end{tikzpicture}}
\end{figure}
We notice that the only elementary moves that apply to $\phi$ to give adjacent integer flows are $m_2^+$, $m_3^-$, and $m_4^+$. We conclude that $\sz(\phi)=2$ and $\zs(\phi)=1$.

We now determine the cyclic permutation $\pi$ that corresponds to the integer flow $\phi$ by following \cite[Definition~4.2]{dleon23}. We first must reindex the labels on the nonslack edges of $\cZig_n$ to run from $y_1$ through $y_n$. Then we can compute that $\mathsf{ACT}(1)=\{0\}$,  $\mathsf{INACT}(1)=\{1\}$, and $\mathsf{ACT}(j)=\{0,\hdots,j-2\}$ and $\mathsf{INACT}(j)=\{j-1,j\}$ and for $j\geq 2$. 

These conditions imply that the construction of the cyclic permutation will be extremely well behaved. The numbers 0, 1, and 2 are first placed in a circle, then the numbers $3$ through $n$ are placed in the cyclic permutation as follows. Place number $j$ in the position that is $\phi(y_j)$ positions after number $(j-1)$. 

For the integer flow $\phi$ in our running example (with re-indexed nonslack edges), $\phi(y_3)=0$, $\phi(y_4)=2$, and $\phi(y_5)=0$. Starting with $012$, the 3 is placed directly after the 2 (giving $0123$), the 4 is placed two positions after the 3 (skipping the 0 and the 1 to give $01423$), and the 5 is placed in the position right after the 4 (giving $014523$). Ignoring the leading 0, $\phi$ corresponds to the circular permutation $\pi=14523$. 

The descent statistic $\mathsf{des}$ is read by seeing how many indices $1\leq i\leq n-1$ satisfy $\pi_i>\pi_{i+1}$. In this case the only descent is $\pi_3=5>2=\pi_4$, so $\mathsf{des}(\pi)=1$.


\section{Open Questions}
\label{sec:open}

As we saw in Sections~\ref{sec:groves} and \ref{sec:proofs}, groves can be a powerful tool for proving statements about integer flows. Furthermore, Theorem~\ref{thm:bijectionGF} provides a bijection between $\calG_G^F$ and $\calF_G^\bbZ(\bfd)$ for all graphs $G$ and for all framings~$F$. 

The length-reverse-length framing of the contracted zigzag graph has the precise structure to make elementary moves on groves behave extremely well (as shown in Figure~\ref{fig:elementarymove}). This, in turn, led to a simple numerical condition to check for adjacency of the corresponding integer flows. It is natural to explore the possibilities for other framings.

\begin{open}
When the contracted zigzag graph is adorned with a different framing, are there easy-to-describe elementary moves for integer flows analogous to those in Definition~\ref{def:flowmove}? If so, are there explicit numerical conditions that guarantee adjacency of the corresponding integer flows? 
\end{open}

Of course, we need not restrict ourselves to the contracted zigzag graph.

\begin{open}
Can we determine the concept of an elementary move for a general graph $G$ with a framing $F$? (Or for particular framings of $G$?) If so, how does this allow us to determine the structure of $D(\calT_G^F)$? 
\end{open}

Building on Section~\ref{sec:hstar} with respect to the $h^*$-polynomial for $\calF_G(\bfu)$, the $\sz$ and $\zs$ statistics seem powerful and relatively easy to compute. In addition for searching for a proof of Conjecture~\ref{conj:shellability}, we wonder whether there are analogous statistics for other framings and whether one of these framed statistics agrees with an existing statistic.

\begin{open}
Prove Conjecture~\ref{conj:shellability}.
\end{open}

\begin{open}
Are $\mathsf{sz}^{F}$ and $\mathsf{zs}^{F}$ well defined for other framings $F$ on $\cZig_n$? Or for any framed graph $G$? 
\end{open}

\begin{open}
Is there a framing $F$ on $\cZig_n$ such that $\mathsf{sz}^{F}$ or $\mathsf{zs}^{F}$ agrees with either $\swap$ or $\des$?
\end{open}


\section*{Acknowledgments}

We thank Carolina Benedetti, Rafael Gonz\'alez D'Le\'on, Alejandro Morales, and Martha Yip for fruitful conversations. This material is based upon work supported by the National Science Foundation under Grant Number 2150251 through the Queens Experiences in Discrete Mathematics REU at York College, CUNY and under Grant No. DMS-1929284 while the second author was in residence at the Institute for Computational and Experimental Research in Mathematics in Providence, RI, during the Computing Volumes of Flow Polytopes Collaborate$@$ICERM Program.

\bibliographystyle{alpha}
\bibliography{zigzag}

\end{document}